\theoremstyle{plain}%plain,definition,remark. need amsthm package
\newtheorem{thm}{Theorem}[section]
\newtheorem{defn}[thm]{Definition}
\newtheorem{prop}[thm]{Proposition}
\newtheorem{conj}[thm]{Conjecture}
\newtheorem{cor}[thm]{Corollary}
\newtheorem{lem}[thm]{Lemma}
\newtheorem{them}[thm]{Theorem}
\theoremstyle{remark}
\newtheorem{ex}[thm]{Example}
\newtheorem{rmk}[thm]{Remark}
\newcommand{\R}{\mathbb{R}}
\newcommand{\Hom}{\mathrm{Hom}}
\newcommand{\CF}{\mathrm{CF}}
\newcommand{\im}{\mathrm{Im}}
\newcommand{\Sym}{\mathrm{Sym}}
\newcommand{\coker}{\mathrm{coker\,}}
\newcommand{\Span}{\mathrm{Span}}
\DeclareMathOperator{\rank}{rank}
\numberwithin{equation}{section}
\title{Singularities of Lagrangian Immersions and its Applications in Lagrangian Floer Theory}
\author{Zuyi Zhang \\ \href{mailto:zuyzhang@iu.edu}{zuyzhang@iu.edu}}
\affil{Department of Mathematics\\ Indiana University Bloomington\\ Rawles Hall 831 East 3rd St. Bloomington, IN 47405-7106}
\date{ }
\begin{document}

\maketitle

\begin{abstract}
    In this article, we study singularities of Lagrangian immersions into Cartesian product of surfaces. After applying a {regular homotopy} in the Weinstein tubular neighbourhood of the Lagrangian immersion, the singular points of the Lagrangian immersion can be expressed locally as fold points with finitely many cusp points. This result has {potential} applications to comparing the Lagrangian Floer complexes associated to curves on surfaces related by a Lagrangian correspondence.
\end{abstract}

\section*{Acknowledgement}
The author offers his sincerest gratitude to his advisor Professor Paul Kirk, for his numerous support in this project. The author also would like to thank Christopher Herald, Zhangkai Huang, and the anonymous referee for the helpful suggestions after reading the paper.

\section{Introduction}
In this article, the behavior of Lagrangian immersions from surfaces to closed symplectic 4-manifolds and its application to Lagrangian Floer theory are studied. The article consists of two parts. 
\begin{itemize}
    \item The first part consists of the proof of a theorem (Theorem \ref{thm:main}) describing the generic local structure of a Lagrangian immersion from a surface into a Cartesian product of two surfaces (Section \ref{sec:p} -- Section \ref{sec:immerofbisin}) and the behavior of the bisingular set (Definition \ref{def:bisin}) of the Lagrangian immersion (Section \ref{sec:trabisin}).
    \item The second part {discusses possible applications} of this theorem {(Section \ref{sec:Lagcomp}-Section \ref{sec:Lagcov})}. The motivation of this article is to establish a relation of two Lagrangian Floer complexes related by a Lagrangian correspondence with embedded bisingularities (Definition \ref{def:bisin}). The project has not yet been completed. {Suppose the Lagrangian immersions on two surfaces are related by a Lagrangian correspondence. The main theorem in part two is to compare the bigons connecting the intersections of the Lagrangian immersions on these surfaces.} We list some examples in {the last} section. These examples indicate a possible way to determine the quilted Floer complex associated to these two Lagrangian Floer complexes.
\end{itemize}

In the first part, the symplectic 4-manifold is fixed as the Cartesian product of two closed surfaces. Let $(F_1,\omega_1)$ and $(F_2,\omega_2)$ be two closed symplectic surfaces. Then $F_1\times F_2$ is a symplectic manifold with the symplectic form $\omega_1\times (-\omega_2)$.

\begin{defn}
Let $f:F\rightarrow\Tilde{F}$ be a smooth map between surfaces. 
\begin{itemize}
    \item A point $x\in F$ is called a {\bf singular point} of $f$ if $df_x$ has rank strictly smaller than two.
    \item A critical point $x\in F$ is called a {\bf fold point} if there are local coordinates $x_1,x_2$ centered at $x$ and $y_1,y_2$ centered at $f(x)$ such that $f$ is given by
    \[
    (x_1,x_2)\mapsto(x_1, x_2^2).
    \]
    \item A critical point $x\in F$ is called a {\bf cusp point} if there are local coordinates $x_1,x_2$ centered at $x$ and $y_1,y_2$ centered at $f(x)$ such that $f$ is given by
    \[
    (x_1,x_2)\mapsto(x_1,x_1x_2+x_2^3).
    \]
\end{itemize}
\end{defn}

\begin{defn}\label{def:bisin}
Let $(F_1,\omega_1)$ and $(F_2,\omega_2)$ be two closed symplectic surfaces. Let $g=(g_1,g_2):F\rightarrow F_1\times F_2$ be a smooth Lagrangian immersion. 
\begin{itemize}
    \item A point $x\in F$ is called {\bf bisingular} if $x$ is a critical point for both $g_1$ and $g_2$.
    \item A bisingular point $x\in F$ is called a {\bf bifold point} if $x$ is a fold point for both $g_1$ and $g_2$.
    \item A bisingular point $x\in F$ is called a {\bf cusp point} if $x$ is a cusp point for either $g_1$ or $g_2$.
\end{itemize}
\end{defn}

The following is the main theorem in the first part:
\begin{them}\label{thm:main}
Let $(F_1,\omega_1)$ and $(F_2,\omega_2)$ be two closed symplectic surfaces. Suppose $g=(g_1,g_2):F\rightarrow F_1\times F_2$ is a smooth Lagrangian immersion. Then there is a positive number $\delta>0$ and a regular homotopy of Lagrangian immersions $g^t:F\rightarrow F_1\times F_2$ for $t\in[0,\delta]$ such that the followings hold.
\begin{itemize}
    \item The homotopy starts at $g$, i.e. $g^t|_{t=0}=g$.
    \item The homotopy $g^t$ is a smooth Lagrangian immersion for all $t\in(0,\delta]$ and the bisingular set of $g^\delta$ consists of bifold points and finitely many cusp points.
\end{itemize}
\end{them}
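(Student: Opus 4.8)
The plan is to realize the homotopy $g^t$ as a Hamiltonian isotopy inside a Weinstein tubular neighbourhood of $g$, to reduce the conclusion to a classical genericity statement for the two component maps $g_1,g_2$, and to prove that statement by a jet transversality argument, verifying at each stage that the constraint of remaining a Lagrangian immersion is not an obstruction. Concretely, by the Weinstein neighbourhood theorem applied to the immersion $g$ there are a neighbourhood $U\subset T^*F$ of the zero section $0_F$ and a symplectic immersion (a local diffeomorphism near $0_F$) $G\colon(U,\omega_{\mathrm{can}})\to(F_1\times F_2,\omega_1\oplus(-\omega_2))$ with $G|_{0_F}=g$. For any $\varphi\colon F\to\R$ with $\|\varphi\|_{C^1}$ small the graph $\Gamma_{t\,d\varphi}\subset U$ of the closed $1$-form $t\,d\varphi$ is Lagrangian, so (after reparametrizing $t$ to $[0,\delta]$) $g^t:=G\circ\Gamma_{t\,d\varphi}$ is a regular homotopy of Lagrangian immersions with $g^0=g$, and it is the image of $0_F$ under the Hamiltonian flow of $\varphi\circ\pi$; openness of the immersion condition keeps $g^t$ an immersion once $\delta$ is small. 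Hence it suffices to produce one arbitrarily $C^\infty$-small $\varphi$ for which $G\circ\Gamma_{d\varphi}$ has bisingular set of the required type. (One could instead quote the Gromov--Lees $h$-principle for Lagrangian immersions for more flexibility, but the Weinstein picture is enough.)

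Next I reduce this to making each component $g_i$ an \emph{excellent} map in Whitney's sense, i.e.\ one whose only singular points are folds and cusps. Indeed, if $x$ is bisingular then, since $g$ is an immersion, $\ker dg_1|_x$ and $\ker dg_2|_x$ are complementary lines in $T_xF$; in particular $x$ is a genuine (rank one) singular point of \emph{both} $g_1$ and $g_2$, so excellence forces $x$ to be a fold or a cusp of each. Thus $x$ is a bifold point unless it is a cusp of $g_1$ or of $g_2$, in which case it is a cusp point by definition; and since the cusp locus of an excellent map between closed surfaces is finite, only finitely many bisingular points can be cusp points. Therefore, as soon as $g_1$ and $g_2$ are excellent, the bisingular set of $g$ consists of bifold points together with finitely many cusp points, which is exactly the assertion.

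It remains to show that an arbitrarily small perturbation of the restricted form $G\circ\Gamma_{d\varphi}$ makes both $g_1$ and $g_2$ excellent. Excellence of $g_i$ says that the $3$-jet section $j^3g_i$ is disjoint from the Thom--Boardman strata $\Sigma^{1,1,1}$ and $\Sigma^{2}$ (codimension $\ge 3>\dim F=2$) and meets the cusp stratum $\Sigma^{1,1}$ (codimension $2$) transversally; by the multijet Thom transversality theorem a residual set of maps $F\to F_i$ has this jointly for $i=1,2$. The point is that this residual set is reached inside our restricted family. A first-variation computation in the Weinstein coordinates shows that $\frac{d}{dt}\big|_{0}g^t(x)=\beta_x(d\varphi_x)$, where $\beta_x\colon T^*_xF\xrightarrow{\ \sim\ }\Lambda_x$ is the induced isomorphism onto the Lagrangian complement $\Lambda_x:=dG_{(x,0)}(\text{vertical})$ of $\operatorname{Im}(dg_x)$; consequently the induced change of $j^3_xg_i$ runs over the image of the linear map $j^4_x\varphi\mapsto j^3_x\big(y\mapsto d\pi_i(\beta_y(d\varphi_y))\big)$. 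Away from the bisingular set at least one of $g_1,g_2$ is already an immersion near $x$ and nothing is needed; near a bisingular point one uses the local normal form for $g$ established in Sections~\ref{sec:p}--\ref{sec:immerofbisin}, together with the vanishing of $g_1^*\omega_1=g_2^*\omega_2$ at such points, to check that the above image spans a complement to the tangent space of each of $\Sigma^{1,1,1}$, $\Sigma^{2}$, $\Sigma^{1,1}$. Thus joint transversality of $(j^3g_1,j^3g_2)$ can be forced within the family; choosing $\varphi$ in the resulting residual set and $\delta$ small enough completes the proof.

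The hard part is precisely this last compatibility check. The perturbations at our disposal are not independent deformations of $g_1$ and $g_2$ but are coupled through the Lagrangian condition, and exactly at the bisingular points the map $d\pi_i\circ\beta_x$ degenerates, so one cannot move $j^3_xg_i$ in all directions; one must then exploit the finer local geometry of the immersion near a bisingular point (in particular that $g_1^*\omega_1$ vanishes there, so the constraint is degenerate precisely where it could be restrictive) to see that the directions one \emph{cannot} move are already the generic ones. The remaining points — keeping the cusp locus finite and patching the local perturbations into a single global $\varphi$ — are routine transversality bookkeeping.
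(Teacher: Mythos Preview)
Your overall architecture is the same as the paper's: parametrize nearby Lagrangian immersions via the Weinstein neighbourhood by graphs of exact $1$-forms $d\varphi$, and then argue by jet transversality that a generic such $\varphi$ makes the bisingular set consist of folds and finitely many cusps, with the globalization handled by a parametric/Sard argument. So there is no disagreement on strategy.

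There is, however, a real gap in your reduction step, and it is precisely the point where the paper does something you do not. You aim to make both $g_1$ and $g_2$ excellent and then conclude. But your claim ``away from the bisingular set at least one of $g_1,g_2$ is already an immersion near $x$ and nothing is needed'' does not, as stated, give excellence of \emph{both} $g_i$: if only $g_1$ were an immersion at $x$, $g_2$ could still have a non-fold, non-cusp singularity there, and you would have to perturb it. What actually saves you is the elementary but crucial identity the paper proves first (Lemma~\ref{lem:det} and Corollary~\ref{cor:det}): because $g$ is Lagrangian, $\det(dg_1)=\det(dg_2)$ everywhere, so the singular sets of $g_1$ and $g_2$ coincide \emph{exactly}, and the bisingular set equals each of them. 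This is what makes the coupled problem tractable: (i) away from the bisingular set both $g_i$ are immersions, so indeed nothing is needed; (ii) by Lemma~\ref{lem:trancri}, $dg_1\pitchfork S_1$ is equivalent to $0$ being a regular value of $\det(dg_1)=\det(dg_2)$, so achieving first-order transversality for $g_1$ automatically gives it for $g_2$. Your sketch never isolates this identity, and without it the ``compatibility check'' you flag as the hard part is genuinely unclear.

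On that hard part itself: the paper does not appeal to an abstract statement that the family $\{G\circ\Gamma_{d\varphi}\}$ is rich enough. It writes down, in suitable Darboux/Weinstein coordinates near a bisingular point, an explicit quadratic $h=\tfrac12 r x_1^2+s x_1 x_2-\tfrac12 r x_2^2$ (Theorem~\ref{thm1}) for which $\partial_t\det(dg_i^t)\neq 0$, forcing $dg_i\pitchfork S_1$; and then, assuming that, an explicit cubic $h=x_1x_3^2$ (Theorem~\ref{thm:locim}) for which $\partial_t\,\partial_{x_3}^2 G_1^t\neq 0$, forcing $j^2(g_i)\pitchfork S_{1,1}$. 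These local perturbation data are then globalized by a partition of unity and a Freed--Uhlenbeck style Sard argument (Theorems~\ref{thm2} and~\ref{main5}). Your reference to ``the local normal form established in Sections~\ref{sec:p}--\ref{sec:immerofbisin}'' is circular: those sections \emph{are} the explicit verification you are deferring. Also, you invoke $j^3$, $\Sigma^{1,1,1}$ and $\Sigma^2$; the paper needs only $j^1$ and $j^2$ (Theorems~\ref{thm:g} and~\ref{thm:2ordtran}), and $\Sigma^2$ is irrelevant since Corollary~\ref{cor:det} already forces $\mathrm{rank}(dg_i)\ge 1$ everywhere.
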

{The regular homotopy considered here is only locally Hamiltonian. More precisely, a regular homotopy becomes a Hamiltonian isotopy when restricted to sufficiently small neighborhoods. However, in cases where a Lagrangian Floer chain group forms a chain complex, regular homotopies, unlike Hamiltonian isotopies, do not necessarily preserve the Lagrangian Floer homology.}

The proof of Theorem \ref{thm:main} uses the theory of jet bundles. In section \ref{sec:trabisin}, we show that a further perturbation can be performed such that the restriction of $g_1$ and $g_2$ on the bisingular set are immersions {except at the finitely many cusp} with transversal self-intersections. 

Entov proves a similar theorem as Theorem \ref{thm:main} in \cite{entov1999surgery}, using a surgery method developed by Elia{\v{s}}berg in \cite{eliavsberg1972surgery}. Let $\pi:E\rightarrow M$ be a fiber bundle such that $E$ is a symplectic manifold and the fibers of $\pi$ are Lagrangian submanifolds. Suppose $L\subset E$ is a Lagrangian embedding. Entov projects the Lagrangian submanifold $L$ into the base space $M$ and shows that the singular points of $\pi|_L:L\rightarrow M$ are fold points after applying Hamiltonian perturbations. This is different from Theorem \ref{thm:main} in the sense that 
\begin{itemize}
    \item[$1)$] we consider the projection of Lagrangian immersions into symplectic submanifolds ($F_1$ and $F_2$ in Theorem \ref{thm:main} are symplectic submanifolds),
    \item[$2)$] we allow cusp points in the bisingular set.
\end{itemize}

Theorem \ref{thm:main} concerns Lagrangian Floer theory, which is also the motivation of the article. Let $F_1$ and $F_2$ be closed surfaces. Consider three Lagrangian immersions $L_1,{F}$, $L_2$ into $F_1,F_1\times F_2$, $F_2$, respectively. The Lagrangian immersion ${F}\looparrowright F_1\times F_2$ is called {\bf a Lagrangian correspondence}. This means that under the assumption of Proposition \ref{prop:lagco}, new Lagrangian immersions can be produced by composing $L_1$ and $L_2$ with ${F}$. More precisely, the composition of $L_1$ and ${F}$, defined as $L_1\circ {F}$, is a Lagrangian immersion in $F_2$ and the composition of ${F}$ with $L_2$, defined as ${F}\circ L_2$, is a Lagrangian immersion in $F_1$. It is natural to ask if there is any relation between the Lagrangian Floer complexes $\CF(L_1,{F}\circ L_2)$ and $\CF(L_1\circ {F},L_2)$. In Section \ref{sec:Lagcomp}, it is proved that there is a canonical identification of {the intersections of $L_1$ and $F\circ L_2$ and the intersections of $L_1\circ {F}$ and $L_2$. Under the assumptions of Abouzaid \cite{abouzaid2008fukaya}, these intersection points are generators of $\CF(L_1,{F}\circ L_2)$ and $\CF(L_1\circ {F},L_2)$, respectively.} We then compare the {bigons} connecting the intersections of $L_1$ and ${F}\circ L_2$ with the {bigon}s connecting the intersections of $L_1\circ {F}$ and $L_2$. {In the case where $F_i$ has genus greater than 1 and $L_i,F\circ L_2,L_1\circ F$ are unobstructed \cite{abouzaid2008fukaya}, $i=1,2,$} these {bigon}s correspond to the boundary maps of the Lagrangian Floer complexes $\CF(L_1,{F}\circ L_2)$ and $\CF(L_1\circ {F},L_2)$, respectively. Theorem \ref{thm:main} implies that after removing the bisingular points from $F$, the induced maps from $F$ to $F_i$, $i=1,2$, are local diffeomorphisms. In particular, if the Lagrangian immersion has no bisingular points, then the induced maps from $F$ to $F_i$, $i=1,2$, are covering maps. Theorem \ref{1} {indicates that by assuming the conditions of Abouzaid \cite{abouzaid2008fukaya}} the boundary map of $\CF(L_1,{F}\circ L_2)$ and $\CF(L_1\circ {F},L_2)$ can be identified canonically when $F\rightarrow F_i$ is a covering map for $i=1,2$. To generalize this theorem to the case where the map $F\rightarrow F_1\times F_2$ contains bisingular points, it is necessary to compare {the {bigons} connecting the intersections of $L_1$ and ${F}\circ L_2$ with the {bigon}s connecting the intersections of $L_1\circ {F}$ and $L_2$} near the bisingular points. A nontrivial example in Cazassus, Herald, Kirk, Kotelskiy \cite{cazassus2020correspondence}, which indicates the importance of studying maps with bisingular points.\\

{One of the goals of {our} project is to find examples corresponding to the Figure-eight bubblings in Bottman and Wehrheim \cite{bottman2018gromov}. In \cite{bottman2018gromov}, they proved that there is a new bubbling phenomenon called the Figure-eight bubbling when applying the strip shrinking. They conjectured that by adding the bounding cochains indicated by the Figure-eight bubbling, the Lagrangian-Floer chain groups are chain homotopic equivalent before and after the strip shrinking. Fukaya claimed that the bounding cochain constructed in \cite{fukaya2017unobstructed} is the same as the bounding cochain in \cite{bottman2018gromov}. In \cite{fukaya2017unobstructed}, he utilizes a homological algebraic argument to show the existence of the bounding cochain. More precisely, Fukaya created an equation and solved it using an inductive way. Then he proved that the solution to this equation satisfies the Maurer-Cartan equation. These are purely algebraic and we are trying to find a geometric counterpart of it. More details are in {Section \ref{sec:Lagcov}}.}

\section{Preliminary}\label{sec:p}
This section provides an introduction to basic notations from singularity theory and Lagrangian Floer theory.

\subsection{Jet bundles and singularities}
This subsection contains the definition of jet bundles. Using the techniques of jet bundles provided in Golubitsky and Guillemin's book \cite{golubitsky2012stable}, it can be shown that the singularities of smooth maps between closed surfaces are stably one dimensional submanifolds whose elements are fold points and finitely many isolated cusp points, that is to say, the singularities of nearby smooth maps are also one dimensional submanifolds whose elements are fold points and finitely many isolated cusp points with the same number. This subsection is necessary for Section \ref{sec:immerofbisin}. For details, see \cite{golubitsky2012stable}.

Here is the outline of this subsection. We first give the definition of first jet bundles. Then Whitney's fold singularity theorem (Theorem \ref{thm:g}) is stated. After that, the definition of second jet bundles is presented. Finally, we introduce Whitney's cusp singularity theorem (Theorem \ref{thm:2ordtran}).\\

Let $X$ and $Y$ be smooth surfaces. Suppose $\pi_1$ and $\pi_2$ are the projections from $X\times Y$ to $X$ and $Y$, respectively. The {\bf first jet bundle} $\mathbf{J^1(X,Y)}$ is defined as 
\begin{equation}\label{equ:2.2}
J^1(X,Y):=\pi_1^*(T^*X)\otimes\pi_2^*(TY)(\cong\Hom(\pi_1^*(TX),\pi_2^*(TY))).
\end{equation}
This is a bundle over $X\times Y$ whose fiber over $(x,y)\in X\times Y$ is $\Hom(T_xX,T_yY)$.\\
Notice that the space of rank one $2\times2$ matrices can be written as{
\begin{equation*}
    \begin{bmatrix}
    a & ca\\
    b & cb
    \end{bmatrix}\ \ \mathrm{or}\ \ 
    \begin{bmatrix}
    ca & a\\
    cb & b
    \end{bmatrix}\ \ a,b,c\in\R.
\end{equation*}
Therefore this space is the union of two subspaces of $\Hom(\R^2,\R^2)$ with dimension three.}

\begin{defn}\label{def:s1}
Let $\mathbf{S_1(X,Y)}$ be the subbundle of $J^1(X,Y)\rightarrow X\times Y$ whose fiber over a point $(x,y)\in X\times Y$ is the submanifold of rank one linear maps in $\Hom(T_xX,T_yY)$. To simplify the notation,  we use $S_1$ to represent $S_1(X,Y)$.
\end{defn}

\begin{defn}
Suppose that $f:X\rightarrow Y$ is a smooth map between two manifolds. Given a submanifold $A\subset Y$, we say that {\bf $\mathbf f$ is transverse to $\mathbf A$ at $\mathbf x$} if 
\[
\im(df_x)+T_{f(x)}A=T_{f(x)}Y.
\]
Suppose  $f$ is transverse to $A$ at every $x\in f^{-1}(A)$, then $f$ is said to {\bf be transverse to $\mathbf A$}.
\end{defn}

One of the most important theorems is the following.

\begin{them}[Transversality Theorem, Theorem 4.4 \cite{golubitsky2012stable}]
Let $f:X\rightarrow Y$ be a smooth map between two manifolds. Suppose $A\subset Y$ is a submanifold. If $f$ is transverse to $A$, then $f^{-1}(A)$ is a smooth submanifold of $X$. In particular, if $A=\{y\}$ is a single point and $df_x$ is surjective for all $x\in f^{-1}(y)$, then $f^{-1}(y)$ is a smooth submanifold of $X$.
\end{them}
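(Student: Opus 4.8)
The plan is to reduce the statement to the special case in which $A$ is a point — the regular value theorem — which in turn follows directly from the implicit function theorem, and then to bootstrap the general transversality statement from it by passing to coordinates on $Y$ adapted to the submanifold $A$.

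First I would dispatch the ``in particular'' clause. Fix $x\in f^{-1}(y)$ with $df_x$ surjective; since surjectivity of the differential is an open condition, it persists on a neighbourhood of $x$. By the submersion normal form (a consequence of the implicit function theorem) there are coordinates $(x_1,\dots,x_m)$ centered at $x$ and $(y_1,\dots,y_n)$ centered at $y$ in which $f$ becomes the projection $(x_1,\dots,x_m)\mapsto(x_1,\dots,x_n)$. In these coordinates $f^{-1}(y)$ is the slice $\{x_1=\dots=x_n=0\}$, an $(m-n)$-dimensional submanifold. Since this holds near every point of $f^{-1}(y)$ and being a submanifold is a local condition, the claim follows.

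For the general statement, put $k=\operatorname{codim}_Y A$. Being a submanifold of $X$ is local, so it suffices to analyze a neighbourhood of each $x\in f^{-1}(A)$. Choose a submanifold chart $\phi\colon V\to\R^n$ around $f(x)$ with $\phi(A\cap V)=\R^{n-k}\times\{0\}$, let $\pi\colon\R^n\to\R^k$ be projection onto the last $k$ coordinates, and set $s:=\pi\circ\phi\colon V\to\R^k$. Then $s$ is a submersion with $s^{-1}(0)=A\cap V$, and since $d\phi_{f(x)}$ sends $T_{f(x)}A$ onto $\R^{n-k}\times\{0\}=\ker d\pi$, we get $T_{f(x)}A=\ker(ds_{f(x)})$. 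Now put $U=f^{-1}(V)$ and $g=s\circ f\colon U\to\R^k$, so that $f^{-1}(A)\cap U=g^{-1}(0)$. The key observation is that $dg_x=ds_{f(x)}\circ df_x$ is surjective if and only if $\im(df_x)+\ker(ds_{f(x)})=T_{f(x)}Y$, which is exactly the transversality hypothesis $\im(df_x)+T_{f(x)}A=T_{f(x)}Y$. Hence $0$ is a regular value of $g$ near $x$, and the special case just proved exhibits $g^{-1}(0)$ as a submanifold of $U$ of dimension $\dim X-k$.

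Finally, since the local dimension $\dim X-k$ is the same at every point of $f^{-1}(A)$, these local charts patch together to present $f^{-1}(A)$ as a genuine submanifold of $X$ of codimension $\operatorname{codim}_Y A$. There is no serious obstacle here; the only point demanding care — a mild one — is the translation of transversality into surjectivity of $dg_x$, which hinges on the identification, via the submanifold chart, of $T_{f(x)}A$ with the kernel of the differential of the auxiliary submersion $s$. Everything else reduces to the implicit function theorem together with the locality of the notions ``submersion'' and ``submanifold''.
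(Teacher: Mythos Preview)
Your argument is correct and is essentially the standard textbook proof of the transversality theorem: reduce to the regular value case via a submanifold chart that straightens $A$, then invoke the submersion normal form coming from the implicit function theorem. The translation of the transversality condition into surjectivity of $d(s\circ f)_x$ is handled cleanly.

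Note, however, that the paper does not supply its own proof of this statement. It is quoted as a background result with a citation to Golubitsky--Guillemin, \emph{Stable Mappings and Their Singularities}, and no argument is given in the text. So there is no ``paper's proof'' to compare against; your write-up simply fills in what the cited reference contains, and the approach you take is the same one found there.
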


\begin{rmk}
Let $f:X\rightarrow Y$ be a smooth map between two surfaces. The tangent map $df$ of $f$ is a map from $X$ to $J^1(X,Y)$. If $df$ is transverse to $S_1$, then the transversality theorem asserts that $S_1(f):=(df)^{-1}(S_1)$ is a smooth submanifold of $X$. Call $S_1(f)$ the {\bf singular set} of $f$.
\end{rmk}

The proof of the following theorem can be found in \cite{golubitsky2012stable}.

\begin{them}[Whitney]\label{thm:g}
Suppose that $f:X\rightarrow Y$ is a smooth map between surfaces. Assume $df$ is transverse to $S_1$ at $x$. If the restriction of $f$ to $S_1(f)$ is an immersion, then $x$ is a fold point of $f$.
\end{them}

%\begin{figure}[H]\label{fig:1}
%\centering 
%\includegraphics[width=0.6\textwidth]{Singfold} 
%\caption{Local picture of fold singularity (This picture is from Golubitsky and Guillemin \cite{golubitsky2012stable})} 
%\label{Fig.fold}
%\end{figure}

The next task is to define the second jet bundle.

Suppose $X$ and $Y$ are two smooth surfaces. Denote $\pi_1$ and $\pi_2$ as the projections from $X\times Y$ to $X$ and $Y$, respectively. Let
\[
E=\pi_1^*(TX),F=\pi_2^*(TY).
\]
The {\bf the second jet bundle} $\mathbf{J^2(X,Y)}$ is defined as
\begin{equation*}
    J^2(X,Y):=\Hom(E,F)\oplus \Hom(\Sym^2E,F),
\end{equation*}
where the symmetric product is{
\[
\Sym^2 E:=\Span\{\frac12v_1\otimes v_2+\frac12v_2\otimes v_1|v_1\otimes v_2\in E\otimes E\}.
\]}
By Equation (\ref{equ:2.2})
\[
J^1(X,Y)\cong\Hom(E,F),
\]
so there is a canonical projection map from $J^2(X,Y)$ to $J^1(X,Y)$. \\
\begin{rmk}
Let $f:X\rightarrow Y$ be a smooth map between surfaces. Then $f$ induces a smooth map (defined later)
\[
j^2(f):X\rightarrow J^2(X,Y).
\]
\end{rmk}

Recall that $S_1$ is a submanifold in $J^1(X,Y)$ defined independently of $f$ (Definition \ref{def:s1}). The singular set $S_1(f)$ is the preimage of $S_1$ under $df$. Let 
\[
S_{1,1}(f):=\{x|x\in S_1(f), T_xS_1(f)=\ker(df)_x\}.
\]
In generic case, Theorem \ref{thm:2ordtran} shows that $S_{1,1}(f)$ corresponds to cusp points of $f$.

The following is to define a submanifold (denoted as $S_{1,1}$) of $J^2(X,Y)$ whose preimage under $j^2(f)$ is $S_{1,1}(f)$.\\
Define 
\begin{align*}
    \pi: E\otimes E&\rightarrow\ \ \ \ \ \ \ \Sym^2 E\\
    v_1\otimes v_2&\mapsto \frac12v_1\otimes v_2+\frac12v_2\otimes v_1.
\end{align*}
This induces a map over the bundles
\begin{align*}
    \pi^*_E:\Hom(\Sym^2E,F)&\rightarrow  \Hom(E\otimes E,F)\\
    f\ \ \ \ \ \ \ \ \ \ \ &\mapsto\ \ \ \ \ \ f\circ\pi
\end{align*}
Since 
\[
\Hom(E\otimes E,F)\cong \Hom(E,\Hom(E,F)),
\]
we have the following compositions
\begin{equation}\label{equ:2.4}
    \bar\pi_E:\Hom(\Sym^2E,F)\stackrel{\pi^*_E}\longrightarrow \Hom(E\otimes E,F)\cong \Hom(E,\Hom(E,F)).
\end{equation}
Notice that the image of the map $\pi^*_E$ are linear maps $g:E\otimes E\rightarrow F$ symmetric in $E\otimes E$ and
\[
\pi^*_E:\Hom(\Sym^2E,F)\rightarrow \im\pi^*_E
\]
is an isomorphism. Suppose $\tilde g\in \Hom(E,\Hom(E,F))$ is symmetric in $E\otimes E$ after applying the isomorphism in Equation (\ref{equ:2.4}), then we can regard $\tilde g$ as an element in $\Hom(\Sym^2E,F)$.

\begin{rmk}\label{rmk:loccoord}
We diverge here to give the definition of $j^2(f)$ using Equation (\ref{equ:2.4}), where $f:X\rightarrow Y$ is a smooth map between surfaces. From the last section, $df$ can be regarded as the map
\begin{align*}
    df:X&\rightarrow \Hom(E,F)\\
    x&\mapsto (x,f(x),df_x).
\end{align*}
Therefore take the differential once again to get
\begin{align*}
    d(df):TX&\rightarrow\ \ \ \ \ \ \ \ \ \ \ T\Hom(E,F)\\
    (x,v)\ \ &\mapsto (x,f(x),df_x,dx(v),df_x(v),d(df_x)(v)).   
\end{align*}
Notice that the last coordinate is in $T(\Hom(E,F)|_{(x,f(x))})\cong\Hom(E,F)|_{(x,f(x))}$, thus
\[
d(df_x)\in\Hom(T_xX,\Hom(E,F)|_{(x,f(x))}).
\]
Since $T_xX$ is identified with $E|_{(x,f(x))}$, then
\[
d(df_x)\in\Hom(E|_{(x,f(x))},\Hom(E,F)|_{(x,f(x))})\cong\Hom(E|_{(x,f(x))}\otimes E|_{(x,f(x))},F|_{(x,f(x))}).
\]
Locally,
\[
d(df_x)(\frac{\partial}{\partial x_i},\frac{\partial}{\partial x_j})=d(df_x)(\frac{\partial}{\partial x_j},\frac{\partial}{\partial x_i})=\frac{\partial^2 f}{\partial x_ix_j},\ \ i,j=1,2.
\]
This implies that $d(df_x)(\cdot,\cdot)$ is symmetric. As a result, compare with the maps in Equation (\ref{equ:2.4}),
\[
d(df_x)\in\Hom(\Sym^2E,F)|_{(x,f(x))}.
\]
So $j^2(f)$ can be defined as
\begin{align*}
    j^2(f):X&\rightarrow\ \ \ \ \ J^2(X,Y)\\
    x&\mapsto (x,f(x),df_x,d(df_x)).
\end{align*}
\end{rmk}

{\bf Then we continue to define} $\mathbf{S_{1,1}}$. Let $(x,y,\alpha)\in S_1$. By the definition of $S_1$, 
\[
\alpha\in \Hom(T_xX,T_yY) \ \text{and}\ \rank(\alpha)=1.
\]
As a result,
\[
K_{\alpha}:=\ker\alpha\subset T_xX\ \text{and}\ L_{\alpha}:=\coker\alpha\subset T_yY
\]
are one dimensional vector spaces. Let {
\[
\Sym^2 K_{\alpha}:=\Span\{\frac12v_1\otimes v_2+\frac12v_2\otimes v_1|v_1\otimes v_2\in K_{\alpha}\otimes K_{\alpha}\}.
\]}
Then we have a bundle (\cite{golubitsky2012stable} Section 4, Chapter 6) over $S_1$
\[
\Hom(\Sym^2K,L):=\cup_\alpha\Hom(\Sym^2K_\alpha,L_\alpha)\rightarrow S_1.
\]
Define 
\begin{align*}
    \pi_{K_\alpha}: K_{\alpha}\otimes K_{\alpha}&\rightarrow\ \ \ \ \ \Sym^2 K_{\alpha}\\
    v_1\otimes v_2\ \ \ &\mapsto \frac12v_1\otimes v_2+\frac12v_2\otimes v_1.
\end{align*}
This induces a map over the bundles
\begin{align*}
    \pi^*_{K}:\Hom(\Sym^2K,L)&\rightarrow \Hom(K\otimes K,L)\\
    f\ \ \ \ \ \ \ \ \ &\mapsto\ \ \ \ \ \ f\circ\pi
\end{align*}
Since 
\[
\Hom(K\otimes K,L)\cong \Hom(K,\Hom(K,L)),
\]
one can get the following compositions
\[
\bar\pi_K:\Hom(\Sym^2K,L)\stackrel{\pi^*_K}\longrightarrow \Hom(K\otimes K,L)\cong \Hom(K,\Hom(K,L)).
\]
Combine this with Equation (\ref{equ:2.4}), the following diagram is commutative
\begin{equation*}
    \xymatrix{
    \Hom(\Sym^2K,L)\ar[r]&\Hom(K\otimes K,L)\ar[r]^{\cong\ \ \ } &\Hom(K,\Hom(K,L))\\
    \Hom(\Sym^2E,F)|_{S_1}\ar[u]^\eta\ar[r]&\Hom(E\otimes E,F)|_{S_1}\ar[u]^\eta\ar[r]^{\cong\ \ \ }&\Hom(E,\Hom(E,F))|_{S_1}\ar[u]^\eta,
    }
\end{equation*}
where the vertical maps $\eta$ are given by first restricting the corresponding maps to $K$ and then projecting to $L$. Equipped with the above diagram, define
\[
S_{1,1}(X,Y):=\{(x,y,\alpha,\beta)\in J^1(X,Y)\oplus\Hom(\Sym^2E,F)|(x,y,\alpha)\in S_1,\ker\bar\pi_K(\eta(\beta))=\ker\alpha\}.
\]
If no confusion is caused, then $S_{1,1}$ is used instead of $S_{1,1}(X,Y)$.

The next theorem is about the properties of $S_{1,1}$.

\begin{them}[\cite{ronga1971calcul} Proposition 3.2, \cite{golubitsky2012stable} Theorem 4.7 Chapter 6]\label{thm:2.8}
Let $f:X\rightarrow Y$ be a smooth map between surfaces. Suppose $df$ is transverse to $S_1$. Then 
\[
\{x|x\in S_{1,1}(f)\}=\{x| j^2(f)(x)\in S_{1,1}\}.
\]
\end{them}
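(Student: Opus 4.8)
The plan is to establish the equality $\{x \mid x \in S_{1,1}(f)\} = \{x \mid j^2(f)(x) \in S_{1,1}\}$ by unwinding the definitions of both sets in local coordinates and checking that the condition defining each side is the same linear-algebra condition on the first and second derivatives of $f$. Since $df$ is transverse to $S_1$, the set $S_1(f) = (df)^{-1}(S_1)$ is a smooth one-dimensional submanifold of $X$, so for $x \in S_1(f)$ both $T_x S_1(f)$ and $\ker(df)_x$ are one-dimensional subspaces of $T_x X$, and the condition $x \in S_{1,1}(f)$ is the statement that these two lines coincide. The right-hand side condition $j^2(f)(x) \in S_{1,1}$ is, by the definition of $S_{1,1}$ and the description of $j^2(f)$ in Remark \ref{rmk:loccoord}, the statement that $\ker \bar\pi_K(\eta(d(df_x))) = \ker (df_x)$. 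So the proof reduces to showing these two conditions agree.

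First I would fix a point $x \in S_1(f)$ and choose convenient local coordinates $x_1, x_2$ on $X$ centered at $x$ and $y_1, y_2$ on $Y$ centered at $f(x)$ so that $\ker(df_x) = \Span(\partial/\partial x_2)$ and $\coker(df_x) = L_\alpha = \Span(\partial/\partial y_2)$; this is possible because $\rank(df_x) = 1$. In these coordinates write $f = (f_1, f_2)$. The condition that $df$ is transverse to $S_1$ allows us to parametrize $S_1(f)$ near $x$: I would use the transversality to show that $S_1(f)$ is cut out by a single submersive equation (the vanishing of the appropriate minor/derivative determining the drop in rank), and then compute a tangent vector to $S_1(f)$ at $x$ by implicit differentiation. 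The tangent line $T_x S_1(f)$ is thus expressed explicitly in terms of the first and second partials of $f_2$ at $x$ (the component mapping into the cokernel direction).

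Next I would compute the right-hand side explicitly. The element $\eta(d(df_x))$ is obtained from the Hessian $d(df_x) \in \Hom(\Sym^2 E, F)$ by restricting to $K_\alpha = \ker(df_x)$ and projecting to $L_\alpha = \coker(df_x)$; concretely this picks out the single number $\partial^2 f_2 / \partial x_2^2 (x)$ together with the mixed term, giving a quadratic form on $K_\alpha$, and $\ker \bar\pi_K$ of it is $\ker(df_x)$ precisely when this restricted-and-projected Hessian annihilates $\partial/\partial x_2$ paired against all of $T_x X$ — i.e., when $\partial^2 f_2/\partial x_2^2(x) = 0$ and $\partial^2 f_2 /\partial x_1 \partial x_2(x)$ also vanishes in the relevant sense, matching exactly the condition extracted from the implicit-differentiation computation on the left-hand side. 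Carrying out this comparison term by term shows the two conditions are literally the same equation on the $2$-jet of $f$ at $x$, and since both inclusions are checked pointwise this completes the proof. Alternatively, and more cleanly, I would cite the coordinate-free argument: the commutative diagram relating $\bar\pi_K \circ \eta$ to $\bar\pi_E$ shows that $\eta(d(df_x))$ applied to $\ker(df_x)$ computes the intrinsic second fundamental form of the rank-one locus, which is exactly the obstruction to $df|_{S_1(f)}$ being injective at $x$, hence to $T_x S_1(f) = \ker(df)_x$.

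The main obstacle I expect is the bookkeeping in the coordinate computation of $T_x S_1(f)$: one must correctly identify which derivative of $f$ gives the defining equation of $S_1(f)$ (this requires using the transversality hypothesis to know $S_1(f)$ is smooth of the expected dimension and that the naive defining function is a submersion), and then show that the implicitly-differentiated tangent vector to $S_1(f)$ lies in $\ker(df_x)$ if and only if the restricted Hessian $\eta(d(df_x))$ degenerates along $\ker(df_x)$. Keeping track of the identifications $T_x X \cong E|_{(x,f(x))}$ and the quotient $T_{f(x)}Y \to L_\alpha$ throughout — so that the two sides are being compared as conditions on the same object — is the delicate point; everything else is a direct unwinding of the definitions given before the theorem statement, and indeed this is essentially the content of the cited results \cite{ronga1971calcul, golubitsky2012stable}, so I would present the coordinate computation as the verification and refer to those sources for the conceptual packaging.
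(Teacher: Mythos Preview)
The paper does not actually give a proof of this theorem: it is stated with a citation to Ronga and to Golubitsky--Guillemin, and the only related material is Remark~\ref{rmk:2.2}, which computes (assuming the theorem) that in adapted coordinates the condition $j^2(f)(x)\in S_{1,1}$ reads $\partial^2 f_2/\partial x_2^2(x)=0$. So there is nothing in the paper to compare your argument against beyond that remark.

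Your strategy --- choose coordinates with $\ker(df_x)=\Span(\partial/\partial x_2)$ and $\coker(df_x)=\Span(\partial/\partial y_2)$, compute $T_xS_1(f)$ by implicit differentiation of the defining equation $\partial f_2/\partial x_2=0$, and check it equals $\ker(df_x)$ iff the $S_{1,1}$ condition holds --- is exactly the standard proof in the cited sources, and it works. There is, however, a confusion in your unwinding of the right-hand side. You write that $j^2(f)(x)\in S_{1,1}$ requires both $\partial^2 f_2/\partial x_2^2(x)=0$ and a condition on the mixed partial. That is not correct: the map $\eta$ restricts $\beta=d(df_x)$ to $K_\alpha\times K_\alpha$, not to $K_\alpha\times T_xX$, and since $K_\alpha$ is one-dimensional the resulting element of $\Hom(K,\Hom(K,L))$ is determined by the single number $\partial^2 f_2/\partial x_2^2(x)$. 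The kernel of $\bar\pi_K(\eta(\beta))$ equals $K_\alpha$ precisely when this number vanishes; the mixed partial $\partial^2 f_2/\partial x_1\partial x_2$ does not enter. This matches Remark~\ref{rmk:2.2}. Once you correct this, the comparison with $T_xS_1(f)=\ker(df_x)$ goes through: after normalizing $f_1=x_1$ near $x$, the singular set is $\{\partial f_2/\partial x_2=0\}$, its tangent at $x$ is the kernel of $(\partial^2 f_2/\partial x_1\partial x_2,\,\partial^2 f_2/\partial x_2^2)$, and this line equals $\Span(\partial/\partial x_2)$ iff $\partial^2 f_2/\partial x_2^2(x)=0$, using the transversality hypothesis to ensure that gradient is nonzero.
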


\begin{rmk}\label{rmk:2.2}
According to \cite{ando1982elimination}, $S_{1,1}$ is a codimension two submanifold of $J^2(X,Y)$. We illustrate this claim by calculating in local coordinates. (This is needed in Theorem \ref{thm:locim}.) Let $f=(f_1,f_2):X\rightarrow Y$ be a smooth map between surfaces. Then $f$ induces the map
\[
j^2(f):X\rightarrow J^2(X,Y).
\]
Recall Remark \ref{rmk:loccoord} shows that in local coordinates
\[
j^2(f)(x)=(x,f(x),df_x,\frac{\partial^2 f}{\partial x_1^2}(x),\frac{\partial^2 f}{\partial x_1x_2}(x),\frac{\partial^2 f}{\partial x_2^2}(x)).
\]
Suppose $x\in S_{1,1}(f)$. Since $S_1(f)$ is a submanifold of $X$ and $T_xS_1(f)\subset \ker df_x$, the coordinates around $x$ and $f(x)$ can be chosen such that
\begin{equation*}
    S_1(f)=\{(x_1,x_2)|x_1=0\},\ \ 
    df_x=
    \begin{bmatrix}
    1&0\\
    0&0
    \end{bmatrix}.
\end{equation*}
So $\frac{\partial}{\partial x_2}\in\ker df_x$. According to the previous theorem, 
\[
x\in S_{1,1}(f)\Longleftrightarrow j^2(f)(x)\in S_{1,1}(f).
\]
As a result, by the definition of $S_{1,1}$, the map
\begin{align*}
    \bar\pi_K\circ\eta(d(df_{x}))(\frac{\partial}{\partial x_2}):\ker df_x&\rightarrow \coker df_x\\
    \frac{\partial}{\partial x_2}\ \ \ \ \ \ \ \ \ \ \ &\mapsto\ \  \frac{\partial^2 f}{\partial x_2^2}(x)
\end{align*}
has to be 0. This implies that 
\[
\frac{\partial^2 f}{\partial x_2^2}(x)=0.
\]
Therefore
\[
j^2(f)(x)=(x,f(x),df_x,\frac{\partial^2 f}{\partial x_1^2}(x),\frac{\partial^2 f}{\partial x_1x_2}(x),0).
\]
This shows that the last component in the above formula gives one of the codimensions of $S_{1,1}$ in $J^2(X,Y)$. For the remaining one codimension, notice that the first three coordinates $(x,f(x),df_x)$ correspond to the components in $S_1$. Since $S_1$ has codimension one in $J^1(X,Y)$ and $J^2(X,Y)=J^1(X,Y)\oplus\Hom(\Sym^2E,F)$, then the remaining one codimension of $S_{1,1}$ in $J^2(X,Y)$ corresponds to the codimension of $S_1$ in $J^1(X,Y)$.
\end{rmk}

\begin{them}[Whitney, cf. Golubitsky and Guillemin \cite{golubitsky2012stable} Section 4, Chapter 6]\label{thm:2ordtran}
Let $f:X\rightarrow Y$ be a smooth map between surfaces. Suppose $df$ and $j^2(f)$ intersect with $S_1$ and $S_{1,1}$ transversely, respectively. Then 
\begin{itemize}
    \item $S_{1,1}(f)$ is a zero dimensional submanifold of $S_1(f)$,
    \item there are local coordinates around $x\in S_{1,1}(f)$ and $f(x)$ such that $f$ locally can be written as
    \[
    f(x_1,x_2)=(x_1,x_1x_2+x_2^3).
    \]
    In particular, $x$ is a {\bf cusp point}.
\end{itemize}
\end{them}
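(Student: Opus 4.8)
The plan is to get the first bullet for free from a dimension count together with Theorem~\ref{thm:2.8}, and to obtain the normal form by Whitney's classical argument via the Malgrange preparation theorem, the transversality hypotheses feeding in exactly the non-degeneracies that are needed. For the first bullet: by Remark~\ref{rmk:2.2} the set $S_{1,1}$ is a codimension-two submanifold of $J^2(X,Y)$, and since $j^2(f)$ is transverse to it the transversality theorem makes $j^2(f)^{-1}(S_{1,1})$ a codimension-two, hence zero-dimensional, submanifold of $X$; by Theorem~\ref{thm:2.8} this set equals $S_{1,1}(f)$, and since $df$ is transverse to $S_1$ it is contained in the one-dimensional submanifold $S_1(f)$. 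Thus $S_{1,1}(f)$ is a zero-dimensional submanifold of $S_1(f)$.

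For the normal form, fix $x\in S_{1,1}(f)$ and choose coordinates with $x=0$, $f(x)=0$. Since $x\in S_1$ the rank of $df_0$ is exactly one, so after relabelling I would straighten the submersion $f_1$ and assume $f=(x_1,g(x_1,x_2))$ with $g(0)=0$; then $\rank df_0=1$ reads $\frac{\partial g}{\partial x_2}(0)=0$, one has $\ker df_0=\R\,\partial_{x_2}$, and $S_1(f)=\{\frac{\partial g}{\partial x_2}=0\}$ near $0$. Next I would translate the hypotheses into jet data of $g$ at $0$ via the local computations of Remarks~\ref{rmk:loccoord} and~\ref{rmk:2.2}: from $j^2(f)(0)\in S_{1,1}$ (Theorem~\ref{thm:2.8}) one gets $\frac{\partial^2 g}{\partial x_2^2}(0)=0$; transversality of $df$ to $S_1$ at $0$ means $d(\partial g/\partial x_2)_0\neq 0$, which with the previous line forces $\frac{\partial^2 g}{\partial x_1\partial x_2}(0)\neq 0$; and transversality of $j^2(f)$ to $S_{1,1}$ means the two functions $\partial g/\partial x_2$ and $\partial^2 g/\partial x_2^2$ cutting out $S_{1,1}(f)\subset X$ have independent differentials at $0$, which with the preceding facts forces $\frac{\partial^3 g}{\partial x_2^3}(0)\neq 0$. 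In particular $g(0,\cdot)$ has a zero of order exactly $3$ at $x_2=0$, consistent with the target normal form $(x_1,x_1x_2+x_2^3)$, whose second component restricts to $x_2^3$ on $\{x_1=0\}$.

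The remaining step is Whitney's reduction to the cusp, which I would run as in Golubitsky--Guillemin. Because $g(0,\cdot)$ has an order-$3$ zero, the Malgrange preparation theorem writes $g(x_1,x_2)=U(x_1,x_2)\bigl(x_2^3+A(x_1)x_2^2+B(x_1)x_2+C(x_1)\bigr)$ with $U(0)\neq0$ and $A(0)=B(0)=C(0)=0$ (first shifting $y_2$ by $g(y_1,0)$ one may take $C\equiv0$). One then runs a finite chain of coordinate changes: absorb the unit $U$; complete the cube by the fibrewise substitution $x_2\mapsto x_2-A(x_1)/3$, which kills the $x_2^2$-term; remove any residual purely $x_1$-dependent term by a further shift of $y_2$ by a function of $y_1$; and finally reparametrise the base by $x_1\mapsto B(x_1)$. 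The cusp hypothesis enters precisely at this last step: tracked through the earlier reductions, $\frac{\partial^3 g}{\partial x_2^3}(0)\neq0$ and $\frac{\partial^2 g}{\partial x_1\partial x_2}(0)\neq0$ are exactly what make $B'(0)\neq0$, so $x_1\mapsto B(x_1)$ is a local diffeomorphism, and carrying it out puts $f$ in the form $(x_1,x_1x_2+x_2^3)$, whence $x$ is a cusp point by definition. I expect the main obstacle to be this middle step --- absorbing the unit produced by the preparation theorem and carefully bookkeeping how each subsequent change transforms the finite-order data of $g$, so that the two transversality hypotheses come out as precisely the non-vanishing of $B'(0)$; by comparison the dimension count and the jet computations should be routine.
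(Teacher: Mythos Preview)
The paper does not actually prove this theorem: it is stated as a classical result of Whitney with an explicit reference to Golubitsky--Guillemin, Chapter~6, Section~4, and no proof is given in the paper beyond the subsequent remark verifying that the cusp normal form has an isolated non-fold point. Your proposal is therefore not being compared against a proof in the paper, but against the literature it cites.

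That said, your sketch is essentially the standard Golubitsky--Guillemin argument the paper points to, and it is correct in outline. The dimension count for the first bullet is exactly right. For the normal form, your extraction of the three non-degeneracy conditions $\partial^2 g/\partial x_2^2(0)=0$, $\partial^2 g/\partial x_1\partial x_2(0)\neq 0$, and $\partial^3 g/\partial x_2^3(0)\neq 0$ from the jet hypotheses is accurate, and the Malgrange-preparation route (absorb the unit, complete the cube, kill the constant term, then use $B'(0)\neq 0$ to reparametrise the base) is the classical one. One small point of care: after you complete the cube the new linear coefficient is $B(x_1)-A(x_1)^2/3$ rather than $B$ itself, so when you check that the base change is a diffeomorphism you should differentiate this corrected coefficient; since $A(0)=0$ this does not affect the value at $0$ and $B'(0)\neq 0$ still suffices, but the bookkeeping you flag as the main obstacle should track this.
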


\begin{rmk}
The following calculation shows that all points near a cusp point are fold points. Let
\begin{align*}
    f:\R^2&\rightarrow\ \ \ \ \ \ \R^2\\
    (x_1,x_2)&\mapsto(x_1,x_2+x_2^3).
\end{align*}
The Jacobian of $f$ is
\begin{equation*}
    df=
    \begin{bmatrix}
    1&0\\
    x_2&x_1+3x_2^2
    \end{bmatrix}.
\end{equation*}
Thus $(x_1,x_2)$ is in the singular set iff $x_1=-3x_2^2$. If the singular set is an immersion at $(x_1,x_2)$ under $f$, then $(x_1,x_2)$ is a fold point according to Whitney's theorem. This is equivalent to
\begin{equation*}
    \begin{bmatrix}
    0\\
    0
    \end{bmatrix}
    \ne
    \begin{bmatrix}
    1&0\\
    x_2&0
    \end{bmatrix}
    \begin{bmatrix}
    -6x_2\\
    1
    \end{bmatrix}
    =
    \begin{bmatrix}
    -6x_2\\
    -6x_2^2
    \end{bmatrix}.
\end{equation*}
The above holds if $x_2\ne0$. Therefore the only cusp singular point is $(0,0)$.
\end{rmk}

%\begin{figure}[H]
%\centering 
%\includegraphics[width=0.5\textwidth]{} 
%\caption{Local picture of cusp singularity.} 
%\label{fig:3}
%\end{figure}

\subsection{An introduction to symplectic topology}
This subsection includes basic definitions of symplectic topology, the Darboux theorem, and the Weinstein tubular neighbourhood theorem. More details can be found in McDuff and
Salamon \cite{mcduff2017introduction}.

\begin{defn}
Let $X$ be a smooth manifold with $2n$ dimension. A 2-form $\omega$ is called a {\bf symplectic form} if the following conditions hold:
\begin{itemize}
    \item $\omega$ is a closed form.
    \item The restriction of $\omega$ to each tangent space of $X$ is skew-symmetric and non-degenerate.
\end{itemize}
The pair $(X,\omega)$ is used to represent a symplectic manifold $X$ with the symplectic form $\omega$. When no problem is caused, we just say that $X$ is a symplectic manifold without introducing its symplectic form.
\end{defn}

\begin{ex}
The real vector space $\R^{2n}$ is a symplectic manifold with its symplectic form defined as 
\[
\omega_{std}=\sum_{i=1}^ndx_i\wedge dy_i,
\]
where the coordinate of $\R^{2n}$ is given by $(x_1,\ldots,x_n,y_1\ldots,y_n)$.
\end{ex}

\begin{ex}\label{exp:surface}
Any {oriented} closed surface $F$ admits a symplectic structure. The volume form for any Riemannian metric is a sympletic form.
\end{ex}

\begin{ex}
Suppose that $(X_1,\omega_1)$ and $(X_2,\omega_2)$ are two symplectic manifolds. Then $(X_1\times X_2, w_1\times(-\omega_2))$ is a symplectic manifold.
\end{ex}

\begin{ex}
The cotangent bundle $T^*X$ of any smooth manifold $X$ is a symplectic manifold. 
\end{ex}

\begin{defn}
Let $(X_1,\omega_1)$ and $(X_2,\omega_2)$ be two symmplectic manifolds. Suppose $f:X_1\rightarrow X_2$
is a smooth map.
The function $f$ is a {\bf (local) symplectomorphism} if $f$ is a $($local$)$ diffeomophism and
\[
f^*\omega_2=\omega_1.
\]
In this case, $X_1$ and $X_2$ are {\bf (local) symplectomorphic}.
\end{defn}

\begin{ex}\label{exp:Hamflow}
This example gives a family of symplectomorphisms called the {\bf Hamiltonian flow}. Let $(\R^{2n},\omega_{std})$ be the standard symplectic manifold over $\R^{2n}$ with
\[
\omega_{std}=\sum_{i=1}^{n}dx_i\wedge dy_i.
\]
Suppose that
\begin{align*}
    h:\R^{2n}&\rightarrow \R\\
    (x_1,\ldots,x_n,y_1,\ldots,y_n)&\mapsto h(x_1,\ldots,x_n,y_1,\ldots,y_n)
\end{align*}
is a smooth {compact} supported function. Define the {\bf Hamiltonian vector field} $X$ of $h$ as
\[
X=\sum_{i=1}^n\frac{\partial h}{\partial y_i}\frac{\partial}{\partial x_i}-\sum_{i=1}^n\frac{\partial h}{\partial x_i}\frac{\partial}{\partial y_i}.
\]
Note that
\[
\iota_X\omega_{std}=dh,
\]
where $\iota_X$ is the contraction
\[
\iota_X\omega_{std}(\cdot)=\omega_{std}(X,\cdot).
\]
According to the theory of O.D.E (\cite{arnold1992ordinary} Section 31), the solution $\phi_t$ to the following system of equations uniquely exists
\begin{equation*}
    \left\{
    \begin{aligned}
    \dot{x}&=\ \ \frac{\partial h}{\partial y_i},\quad i=1,\ldots,n\\
    \dot{y}&=-\frac{\partial h}{\partial x_i},\quad i=1,\ldots,n
    \end{aligned}
    \right..
\end{equation*}
The solution $\phi_t$ is called the {\bf Hamiltonian flow} of $h$. Since the flows generated by vector fields are diffeomorphisms, if $\phi_t$ also preserves the symplectic form $\omega_{std}$, then $\phi_t$ is a symplectomorphism for all $t$. In fact, 
\begin{equation*}
    \begin{aligned}
    &\frac{d}{dt}\phi_t^*\omega_{std}\\
    =&d\iota_X\omega_{std}+\iota_Xd\omega_{std}\\
    =&ddh\\
    =&0.
    \end{aligned}
\end{equation*}
The first equality holds because of Cartan's formula (\cite{mcduff2017introduction} Section 3.1).
Therefore 
\[
\phi_t^*\omega_{std}=\omega_{std},\quad \forall t.
\]
This shows that the Hamiltonian flow of a smooth function is a symplectomorphism.
\end{ex}

\begin{rmk}
The Hamiltonian vector field $X$ of $h$ vanishes outside the support of $h$. Therefore the Hamiltonian flow $\phi_t$ is trivial outside the support of $h$.
\end{rmk}

\begin{defn}
Let $X$ and $Y$ be smooth manifolds. The triple $(X,f;Y)$ is defined as an {\bf immersed submanifold} of $Y$ with an immersion map
\[
f:X\rightarrow Y,
\]
if $f$ is smooth and its tangent map $df_x$ is injective for each point $x\in X$. For simplicity, denote by $X\looparrowright Y$ an immersion when the immersion map is not specified.
\end{defn}

\begin{defn}
Suppose that $(L,l;X)$ is an immersion from an $n$ dimensional manifold $L$ to a 2n dimensional symplectic manifold $(X,\omega)$. The immersion $(L,l;X)$ is called  a {\bf Lagrangian immersion} if
\[
l^*\omega=0.
\]
\end{defn}

\begin{ex}
Consider $(\R^{2n},\omega_{std})$. Let
\begin{equation}\label{lagbas}
\begin{split}
    \varepsilon_1,\varepsilon_2,\ldots,\varepsilon_n\\
    \epsilon_1,\epsilon_2,\ldots,\epsilon_n
\end{split}    
\end{equation}
be an orthonormal basis of $\R^{2n}$, where $\varepsilon_i$ stands for the vector with i-th entry being 1 and the rest being 0, and $\epsilon_j$ stands for the vector with $($j+n$)$-th entry being 1 and the rest being 0. Pick one vector from each pair {$\{\varepsilon_i,\epsilon_i\}$}, $i=1,\ldots,n,$ to get $n$ vectors. The space spanned by these $n$ vectors is a Lagrangian subspace.
\end{ex}

\begin{ex}\label{exp:2.1}
Any immersed curve $C$ in a symplectic surface $(F,\omega)$ is Lagrangian. Since $\omega$ is skew-symmetric, then $\omega(v_x,v_x)=0$ for all $x\in C$ and $v_x\in T_xC$.
\end{ex}

\begin{defn}\label{def:lagint}
Let $(L_1,l_1;X)$ and $(L_2,l_2;X)$ be two Lagrangian immersions into a symplectic manifold $X$. The {\bf fiber product} $L_1\times_X L_2$ is defined as 
\[
L_1\times_X L_2=\{(x_1,x_2)\in L_1\times L_2|\ l_1(x_1)=l_2(x_2)\}.
\]
\end{defn}

\begin{defn}
{Let $(L_1,l^1;X)$ and $(L_2,l^2;X)$ be two Lagrangian immersions into a symplectic manifold $X$. Let $(x_1,x_2)\in L_1\times_XL_2$ be an intersection point of these two Lagrangian immersions. We say {\bf $\mathbf{L_1}$ intersects $\mathbf{L_2}$ transversely at $\mathbf{(x_1,x_2)}$} if 
\[
\mathrm{Im}(df^1)_{x_1}\cap\mathrm{Im}(df^2)_{x_2}=\{0\}.
\]
If the above equation holds for every intersection point, then {\bf $\mathbf{L_1}$ intersects $\mathbf{L_2}$ transversely}.}
\end{defn}

\begin{rmk}\label{rmk:lagint}
The terminology {\bf fiber product} comes from category theory. The intersections of immersed manifolds $(L_1,l_1;X)$ and $(L_2,l_2;X)$ in Definition \ref{def:lagint} cannot be defined simply as $l_1(L_1)\cap l_2(L_2)$. Since $(L_1,l_1;X)$ and $(L_2,l_2;X)$ are immersions, $l_i^{-1}(x)$ may contain more than one element for $x\in l_1(L_1)\cap l_2(L_2)$ for $i=1,2$. It is necessary to distinguish these preimages. So we use fiber products instead of intersections here. {To stress on the geometric aspect, the elements in the fiber product are still called intersections}.
\end{rmk}

%\begin{ex}
%Let $c_1:F\rightarrow F_1$ and $c_2:F\rightarrow F_2$ be two covering maps between surfaces. Suppose the symplectic structures on these 3 surfaces satisfying $\omega=c_1^*\omega_1$ and $\omega=c_2^*\omega_2$, where $\omega$, $\omega_1$ and $\omega_2$ are symplectic forms on $F$, $F_1$ and $F_2$, resp. Then 
%\[
%f:=(c_1,c_2):F\rightarrow F_1\times F_2
%\]
%is a Lagrangian immersion with the symplectic form on $F_1\times F_2$ defined as $\omega_1\times(-\omega_2)$. Check by definition. $\forall x\in F$, take two tangent vectors $\forall v_1,v_2\in T_xF$.
%\[
%f^*(\omega_1\times(-\omega_2))(v_1,v_2)=\omega(v_1,v_2)-\omega(v_1,v_2)=0.
%\]
%\end{ex}

%\begin{rmk}wock
%In this example, both components of the Lagrangian immersion $f:F\rightarrow F_1\times F_2$ do not have critical point. As a result, $f$ has no singular points.
%\end{rmk}

The following two theorems are fundamental in symplectic topology .
\begin{them}[Darboux Theorem]{\rm\cite{mcduff2017introduction}}
Let $(X^{2n},\omega)$ be a symplectic manifold. For any point $x\in X$, there is an open neighbourhood $U\subset X$ of $x$ such that $(U,\omega|_U)$ is symplectomorphic to $(\R^{2n},\omega_{std})$.
\end{them}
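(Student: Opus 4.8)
The plan is to use Moser's method (the \emph{Moser trick}). The argument splits into a pointwise linear normalization followed by a deformation argument. \textbf{First stage.} By the existence of a symplectic basis for any finite-dimensional symplectic vector space, choose linear coordinates on $T_xX$ identifying $\omega_x$ with $(\omega_{std})_0$. Composing with a chart, we may assume that $X$ is an open neighbourhood $V$ of $0\in\R^{2n}$, that $x=0$, and that $\omega_0=(\omega_{std})_0$ as alternating forms on $T_0\R^{2n}$. Write $\omega_0:=\omega_{std}$, $\omega_1:=\omega$, and set $\omega_t=(1-t)\omega_0+t\omega_1$ for $t\in[0,1]$. Since $(\omega_t)_0=(\omega_{std})_0$ is nondegenerate for every $t$, after shrinking $V$ to a ball $B$ we may assume $\omega_t$ is a symplectic form on $B$ for all $t\in[0,1]$.

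\textbf{Second stage.} As $B$ is a ball, both $\omega_0$ and $\omega_1$ are closed with $[\omega_1-\omega_0]=0$ in $H^2_{dR}(B)$, so by the Poincar\'e lemma $\omega_1-\omega_0=d\sigma$ for a $1$-form $\sigma$ on $B$; moreover, since $(\omega_1)_0=(\omega_0)_0$, the standard homotopy operator can be arranged so that $\sigma$ vanishes at $0$. Using nondegeneracy of $\omega_t$, define a time-dependent vector field $X_t$ on $B$ by $\iota_{X_t}\omega_t=-\sigma$; then $X_t(0)=0$ for all $t$. Because $X_t$ vanishes at $0$, its flow $\psi_t$ is defined for all $t\in[0,1]$ on a possibly smaller ball $B'\ni 0$, with $\psi_0=\mathrm{id}$ and $\psi_t(0)=0$. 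Then, using Cartan's formula and $d\omega_t=0$,
\[
\frac{d}{dt}\,\psi_t^*\omega_t=\psi_t^*\!\left(\mathcal{L}_{X_t}\omega_t+\frac{d}{dt}\omega_t\right)=\psi_t^*\!\left(d\,\iota_{X_t}\omega_t+(\omega_1-\omega_0)\right)=\psi_t^*\!\left(-d\sigma+d\sigma\right)=0.
\]
Hence $\psi_1^*\omega_1=\psi_0^*\omega_0=\omega_{std}$, so $\psi_1:(B',\omega_{std})\to(\psi_1(B'),\omega)$ is a symplectomorphism onto an open neighbourhood of $x$.

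Finally, to match the stated conclusion verbatim one invokes the elementary fact that every open ball in $(\R^{2n},\omega_{std})$ is symplectomorphic to all of $(\R^{2n},\omega_{std})$ (e.g.\ via a suitably rescaled radial diffeomorphism of the ball onto $\R^{2n}$, adjusted by one further application of Moser's argument so as to preserve $\omega_{std}$). Composing this with $\psi_1^{-1}$ and the linear change of coordinates of the first stage yields the desired $U:=\psi_1(B')$ and the symplectomorphism $(U,\omega|_U)\cong(\R^{2n},\omega_{std})$.

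The main obstacle is the non-compactness of the domain: in contrast with Moser's stability theorem on closed manifolds, the flow of $X_t$ need not exist for time $1$ a priori. The crucial device is choosing the primitive $\sigma$ to vanish at the base point, so that $X_t(0)=0$; this pins $0$ as a fixed point and, by continuous dependence of ODE solutions on initial conditions, guarantees a uniform existence time on a sufficiently small ball. A secondary (but genuinely necessary) point is the linear-algebra normalization in the first stage, without which $\omega_t$ need not remain nondegenerate near $0$.
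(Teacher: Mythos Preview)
The paper does not prove this theorem; it simply records it with a citation to McDuff--Salamon. Your Moser-trick argument is exactly the standard proof found there, and the first two stages are carried out correctly: the linear normalization ensuring $(\omega_t)_0$ is nondegenerate, the choice of primitive $\sigma$ vanishing at the origin so that $X_t(0)=0$, and the Cartan-formula computation showing $\psi_t^*\omega_t$ is constant are all fine. So as a proof of the Darboux theorem in its usual form (existence of local Darboux coordinates), your argument is complete after the second stage.

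There is, however, a genuine error in your final paragraph. The assertion that ``every open ball in $(\R^{2n},\omega_{std})$ is symplectomorphic to all of $(\R^{2n},\omega_{std})$'' is false: a symplectomorphism preserves $\omega_{std}$ and hence the Liouville volume form $\omega_{std}^{\,n}/n!$, but a bounded ball has finite volume while $\R^{2n}$ has infinite volume. No amount of Moser patching will repair this, since volume is a symplectic invariant. The issue is not with your method but with the literal wording of the statement in the paper, which is slightly imprecise; the intended content (and what is actually used later, e.g.\ in the proof of Lemma~\ref{lem:det} and in Section~\ref{sec:trabisin}) is that $U$ is symplectomorphic to an \emph{open subset} of $(\R^{2n},\omega_{std})$, equivalently that $\omega$ admits standard coordinates near $x$. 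Your $\psi_1$ already delivers that, so you should simply stop after the second stage and note the discrepancy in the statement rather than attempt to prove something false.
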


\begin{them}[Weinstein Tubular Neighbourhood Theorem]\label{thm:tubu}{\rm \cite{eliashberg2002introduction}}
Let $(X,\omega)$ be a symplectic manifold. Assume that $f:L\rightarrow X$ is a Lagrangian immersion. Then there is a local symplectomorphism $G$ from a tubular neighbourhood $T^*_\varepsilon L$ of the zero section of the cotangent bundle $T^*L$ to a neighbourhood $U\supset f(L)$ as in the following commutative diagram:
\begin{equation*}
    \xymatrix{
    T_\varepsilon^*L \ar[dr]^G & \\
    L \ar[r]^{f\ \ \ } \ar[u] &U\subset X}.
\end{equation*}
\end{them}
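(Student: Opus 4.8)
The plan is to reduce the statement to a relative Moser-type deformation argument carried out entirely on the total space of $T^*L$. Because all the constructions take place on that genuine manifold and only the final descent to $X$ is required to be a local diffeomorphism, the possible non-injectivity of the immersion $f$ never intervenes. Concretely, I would first produce an auxiliary local diffeomorphism $\Phi\colon T^*_\varepsilon L\to X$ whose restriction to the zero section is $f$, and then deform $\Phi$ by a diffeomorphism of $T^*_\varepsilon L$ fixing the zero section so that the pullback of $\omega$ becomes the canonical (Liouville) symplectic form $\omega_{\mathrm{can}}$ on $T^*L$.

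To build $\Phi$, choose an $\omega$-compatible almost complex structure $J$ on $X$, let $g=\omega(\cdot,J\cdot)$ be the associated Riemannian metric, and equip $L$ with the metric $g_L=f^*g$. Over a point $p\in L$, compose the isomorphism $T_p^*L\to T_pL$ given by $g_L$, the injection $df_p\colon T_pL\to df_p(T_pL)\subset T_{f(p)}X$, and multiplication by $J$, which carries $df_p(T_pL)$ isomorphically onto its $g$-orthogonal complement $J(df_p(T_pL))$ (indeed $g(df_p u,J df_p w)=-\omega(df_p u,df_p w)=0$ for $u,w\in T_pL$ since $L$ is Lagrangian); finally apply the exponential map of $g$. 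This produces a smooth map $\Phi$ from a neighbourhood of the zero section of $T^*L$ into $X$ with $\Phi|_L=f$, and $d\Phi_p$ is an isomorphism at every $p\in L$ because $df_p(T_pL)\oplus J(df_p(T_pL))=T_{f(p)}X$. Hence $\Phi$ is a local diffeomorphism on a neighbourhood of the zero section, which after shrinking $\varepsilon$ (allowing $\varepsilon\colon L\to(0,\infty)$ to be a positive function when $L$ is non-compact) may be taken to be $T^*_\varepsilon L$. Note that $\Phi$ is only a local diffeomorphism, which matches the hypothesis that $f$ is merely an immersion.

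Now set $\omega_0=\omega_{\mathrm{can}}$ and $\omega_1=\Phi^*\omega$, both symplectic on $T^*_\varepsilon L$ after a further shrinking so that $\omega_1$ is non-degenerate there. The crucial point is that $\omega_0$ and $\omega_1$ agree at every point $p$ of the zero section. Writing $T_p(T^*L)=T_pL\oplus T_p^*L$: both forms vanish on $T_pL\times T_pL$ (the zero section is Lagrangian for $\omega_{\mathrm{can}}$, and $\Phi|_L=f$ is Lagrangian, so $\omega_1$ vanishes on $df_p(T_pL)$); both vanish on $T_p^*L\times T_p^*L$ (cotangent fibres are Lagrangian for $\omega_{\mathrm{can}}$, and $d\Phi_p$ maps the vertical space onto $J(df_p(T_pL))$, on which $\omega$ vanishes since $\omega(J\cdot,J\cdot)=\omega(\cdot,\cdot)$ and $df_p(T_pL)$ is Lagrangian); and on a mixed pair $u\in T_pL$, $\xi\in T_p^*L$ both reproduce the tautological pairing $\langle\xi,u\rangle$, using $g(df_p u,J df_p g_L^{-1}\xi)=g_L(u,g_L^{-1}\xi)=\xi(u)$. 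Therefore $\tau:=\omega_1-\omega_0$ is a closed $2$-form vanishing identically along $L$, and the relative Poincaré lemma yields a $1$-form $\sigma$ near the zero section with $d\sigma=\tau$ and $\sigma|_L=0$. Put $\omega_t=\omega_0+t\tau$ for $t\in[0,1]$; these remain non-degenerate near $L$ (non-degeneracy is open and holds along $L$), and the time-dependent vector field $X_t$ determined by $\iota_{X_t}\omega_t=-\sigma$ vanishes on $L$, so its flow $\psi_t$ exists for $t\in[0,1]$ on a (further shrunk) neighbourhood of the zero section and fixes $L$ pointwise. Cartan's formula gives
\[ \frac{d}{dt}\psi_t^*\omega_t=\psi_t^*\left(\mathcal{L}_{X_t}\omega_t+\tau\right)=\psi_t^*\left(d\iota_{X_t}\omega_t+d\sigma\right)=0, \]
so $\psi_1^*\omega_1=\omega_0$. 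Then $G:=\Phi\circ\psi_1$ satisfies $G^*\omega=\psi_1^*\Phi^*\omega=\psi_1^*\omega_1=\omega_{\mathrm{can}}$ and $G|_L=\Phi\circ(\psi_1|_L)=f$; since $G$ is a local diffeomorphism it is open, so $U:=G(T^*_\varepsilon L)$ is an open neighbourhood of $f(L)$, and the diagram commutes.

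The step I expect to be the main obstacle is establishing the pointwise equality $\omega_0|_L=\omega_1|_L$: it is exactly this identity that keeps the straight-line homotopy $\omega_t$ inside the symplectic forms and hence makes Moser's trick applicable, and it is what forces the careful choice of the identification of the normal bundle of $L$ with $T^*L$ through a compatible pair $(J,g)$ — an arbitrary tubular-neighbourhood diffeomorphism would not do. The remaining ingredients — the relative Poincaré lemma, the openness of non-degeneracy, integrating the Moser flow on a neighbourhood of $L$, and the bookkeeping needed to choose $\varepsilon$ (a positive function in the non-compact case) — are routine.
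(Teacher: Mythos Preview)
Your argument is correct: it is the standard Moser--Weinstein proof, and the adaptation to immersions (working upstairs on $T^*L$ and only descending via the local diffeomorphism $G$ at the end) is handled properly. One small caveat: the mixed pairing computation in Step~2 is sensitive to the sign convention for $\omega_{\mathrm{can}}$, so depending on whether one takes $\omega_{\mathrm{can}}=d\lambda$ or $-d\lambda$ you may need to replace $J$ by $-J$ (equivalently, flip the sign in the identification $T_p^*L\to T_pL$); this is cosmetic and does not affect the argument.

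As for comparison with the paper: the paper does not prove this theorem at all. It is stated with a citation to Eliashberg--Mishachev \cite{eliashberg2002introduction} and used as a black box throughout (in Theorems~\ref{thm1}, \ref{thm2}, \ref{thm:locim}, \ref{main5}, \ref{thm:transloc}), so there is nothing to compare your approach against.
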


\begin{rmk}
The local symplectic diffeomorphism $G$ need not to be an injective map.
\end{rmk}
{
\begin{rmk}
    Suppose $X$ and $L$ in the above theorem are open manifolds. The same argument can be shown to get that suppose $D\subset\bar D\subset L$ is an open disc, then there is a local symplectomorophism $G$ from a tubular neighbourhood $T^*_\varepsilon L|_D$ of the zero section of the cotangent bundle $T^*L|_D$ to a neighbourhood $U\supset f(D)$ as in the following commutative diagram:
\begin{equation*}
    \xymatrix{
    T_\varepsilon^*L|_D \ar[dr]^G & \\
    D \ar[r]^{f|_D\ \ \ } \ar[u] &U\subset X}.
\end{equation*}
\end{rmk}}

\section{Singularities of Lagrangian immersions}\label{sec:singular}
Let $F_1$ and $F_2$ be two closed symplectic surfaces. Assume that $g=(g_1,g_2):F\rightarrow F_1\times F_2$ is a Lagrangian immersion. This section shows that the critical points of $g_1$ and $g_2$ coincide.

%\begin{defn}
%Let $X$ be a smooth manifold. The pair $(X,m)$ is called a {\bf Riemannian manifold} if $m\in \Gamma(X,T^*X\otimes T^*X)$ is symmetric, nondegenerate, and bilinear when restricted on each fiber $T_xX$ for any $x\in X$ and in this case, $m$ is a {\bf Riemannian metric} over $X$.
%\end{defn}

%\begin{rmk}
%    The transition functions of Riemannian manifolds preserve the Riemannian metric, therefore the determinant of the Jacobian of any transition function on Riemannian manifolds is always equal to 1.
%\end{rmk}

Let {$f:X\rightarrow Y$} be a smooth map between two Riemannian manifolds with the same dimension. Suppose $U\subset X$ and $V\subset Y$ are open neighbourhoods of the local charts. Then one can calculate the determinant of $df|_U$ at every point in $U$. Assume $\varphi:U\rightarrow U',$ and $\psi:V\rightarrow V'$ are transition functions of {Riemannian manifolds} $X$ and $Y$ respectively, {then the Jacobian of $\varphi$ and $\psi$ are equal to 1. So the following is true:}
\begin{equation*}
    \det(d(\varphi\circ f\circ\psi^{-1}))=\det(d\varphi)\det(df)\det(d\psi^{-1})=\det(df).
\end{equation*}
Therefore the determinant map $\det(df)$ of smooth functions between Riemannian manifolds is well defined and $\det(df)$ is defined as the {\bf determinant map} of $f$.\\

{Let $g=(g_1,g_2):F\rightarrow F_1\times F_2$ be a Lagrangian immersion. According to Definition \ref{def:bisin}, $x\in F$ is a bisingular point iff $x$ is a critical point for both $g_1$ and $g_2$, or equivalently, $\det(dg_1)(x)=\det(dg_2)(x)=0$.}

{
\begin{rmk}
    Every closed smooth manifold can be equipped with a Riemannian metric.
\end{rmk}
}

\begin{lem}\label{lem:det}
Let $(F_1,\omega_1)$ and $(F_2,\omega_2)$ be two closed symplectic surfaces. Equip $F_1\times F_2$ with the symplectic form $\omega_1\times(-\omega_2)$. Suppose that 
\[
g=(g_1,g_2):F\rightarrow F_1\times F_2
\]
is a Lagrangian immersion. {Then with fixed Riemannian metrics on $F_1$ and $F_2$,}
\[
\det(dg_1)=\det(dg_2).
\]
\end{lem}

\begin{proof}
Let 
\[
dg: TF\rightarrow T(F_1\times F_2)
\]
be the tangent map of $g$. Take a point $(x_1,x_2)\in F$. By Darboux's theorem, there are local coordinates such that the symplectic form $\omega_i$ around $g_i(x_1,x_2)\in F_i$ is the standard one on $\mathbb R^2$, for $i=1,2$. Since $g$ is a Lagrangian immersion, the following is true under these coordinates
\begin{equation*}
    \begin{split}
        0&=g^*(\omega_1\times(-\omega_2))(\frac{\partial}{\partial x_1},\frac{\partial}{\partial x_2})\\
        &=g_1^*(\omega_1)(\frac{\partial}{\partial x_1},\frac{\partial}{\partial x_2})-g_2^*(\omega_2)(\frac{\partial}{\partial x_1},\frac{\partial}{\partial x_2})\\
        &=\det(dg_1)-\det(dg_2).
    \end{split}
\end{equation*}
\end{proof}

\begin{cor}\label{cor:det}
Let $(F_1,\omega_1)$ and $(F_2,\omega_2)$ be two closed symplectic surfaces. Given a Lagrangian immersion 
\[
g=(g_1,g_2):F\rightarrow F_1\times F_2,
\]
then the critical sets of $g_1$ and $g_2$ are the same. Since {
\[
1\le\rank(dg_1)_x\le2,1\le\rank(dg_2)_x\le2,\forall x\in F,
\]
then with fixed Riemannian metrics on $F,F_1,F_2$,}
\[
\det(dg_1)(x)=\det(dg_2)(x)=0\Rightarrow \rank(dg_1)_x=\rank(dg_2)_x=1.
\]
\end{cor}

\section{Transversality of 1-jet}

%may move it to later sections
This and the next sections prove the main theorem (Theorem \ref{thm:main}) of the first part. Since its proof is quite long, the idea is explained here. The strategy is to apply Whitney's theorems (Theorem \ref{thm:g}, Theorem \ref{thm:2ordtran}). Let $(F_1,\omega_1)$ and $(F_2,\omega_2)$ be two closed symplectic surfaces. Suppose that
\[
g=(g_1,g_2):F\rightarrow F_1\times F_2
\]
is a smooth Lagrangian immersion from a closed surface to $(F_1\times F_2,\omega_1\times(-\omega_2))$. According to Corollary \ref{cor:det}, since $g$ is a Lagrangian immersion, then
\[
\rank(dg_1)=\rank(dg_2)\ne0.
\]
Whitney's theorems indicate that, if
\begin{itemize}
    \item the tangent map $dg_i$ is transverse to $S_1(F,F_i)$ (Definition \ref{def:s1}) for $i=1,2$,
    \item after removing finitely many points, the singular set $S_1(g_i)$ is immersed into $F_i$ under $g_i$, for $i=1,2$,
    \item $j^2(g_i)$ is transverse to $S_{1,1}(F,F_i)$ at the finitely many points in the last entry, for $i=1,2$,
\end{itemize}
then $g_i$ only has fold singularities and finitely many cusp singularities.

We construct a perturbation in this section such that the first condition is satisfied. Another perturbation is constructed in the next section such that the second and the third conditions hold. The existence of both perturbations is proved locally first (Theorem \ref{thm1} and Theorem \ref{thm:locim}). A global perturbation (Theorem \ref{thm2} and Theorem \ref{main5}) is constructed using a partition of unity and the local perturbation.

\subsection{Local solution}
We first work on the local case. Let $(\R^2,g=(g_1,g_2);\R^2\times\R^2)$ be a Lagrangian immersion into $(\mathbb R^2\times \mathbb R^2,\omega_{std}\times(-\omega_{std}))$. Suppose that $dg_i$ is the tangent map of $g_i$ for $i=1,2$. Then Corollary \ref{cor:det} implies that
\[
 \rank(dg_1)_x=\rank(dg_2)_x\ge1,\forall x\in\R^2.
\]
Let $S_1$ be the subbundle of the first jet bundle $J^1(\R^2,\R^2)$ whose fiber consists of rank one matrices. Note that $dg_i$ can also be viewed as a map from $\R^2$ to $J^1(\R^2,\R^2)$. Since  the rank of $g_1$ cannot be zero, $g_1$ has critical points if and only if the image of $dg_1$ intersects $S_1$. In this section, we prove that after performing a perturbation, $dg_1$ intersects $S_1$ transversely. 

Because $\det(dg_1)=\det(dg_2)$, according to the lemma below, if $dg_1$ is transverse to $S_1$, then $dg_2$ is also transverse to $S_1$.

\begin{lem}\label{lem:trancri}
Suppose that
\[
f=(f_1,f_2):\R^2\rightarrow\R^2
\]
is a smooth map such that at every point
\[
\rank(df)\ge1.
\]
Then $df$ is transverse to $S_1$ in $J^1(\R^2,\R^2)$ if and only if $0$ is a regular value of $\det(df)$.
\end{lem}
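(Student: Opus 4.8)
The plan is to work entirely in local coordinates and make the transversality condition $df \pitchfork S_1$ explicit as a statement about the $2\times 2$ Jacobian matrix $df_x$, then recognize that statement as $0$ being a regular value of $\det(df)$. First I would recall that $S_1 \subset J^1(\R^2,\R^2)$ has codimension one: its fiber over a point $(x,y)$ is the rank-one locus inside $\Hom(\R^2,\R^2)\cong\R^4$, which is a three-dimensional cone (smooth away from $0$, which the hypothesis $\rank(df)\ge 1$ excludes). So near a critical point $x_0$ of $f$ the map $df$ hits $S_1$ precisely at the matrices $A$ with $\det A = 0$ and $A\ne 0$, and transversality of $df$ to $S_1$ at $x_0$ means $\operatorname{Im}(d(df)_{x_0}) + T_{df(x_0)}S_1 = T(J^1(\R^2,\R^2))$.

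Next I would use the product structure $J^1(\R^2,\R^2)\cong \R^2\times\R^2\times\Hom(\R^2,\R^2)$ and the fact that $df$ covers the identity in the $\R^2$ base factor: its image is a graph, so the tangent space $\operatorname{Im}(d(df)_{x_0})$ is a complement to $\{0\}\times\{0\}\times\Hom(\R^2,\R^2)$-type directions only insofar as the Jacobian varies. The key observation is that the three-dimensional subbundle $S_1$ contains the full base directions $T(\R^2\times\R^2)$, because $S_1$ is a \emph{subbundle} over $\R^2\times\R^2$; hence transversality of $df$ to $S_1$ at $x_0$ reduces to transversality, inside the fiber $\Hom(\R^2,\R^2)$, of the map $x\mapsto df_x$ to the rank-one cone $\Sigma_1 := \{\det = 0\}\setminus\{0\}$ at the point $df_{x_0}$. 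Since $\Sigma_1$ is exactly the regular level set $\{\det = 0\}$ of the submersion $\det:\Hom(\R^2,\R^2)\setminus\{0\}\to\R$, the condition $df \pitchfork_{x_0} \Sigma_1$ is equivalent to $d(\det\circ df)_{x_0} = d(\det df)_{x_0}$ being surjective onto $\R$, i.e. nonzero. Running this equivalence over all critical points $x_0$ (the set $(\det df)^{-1}(0)$), I get: $df \pitchfork S_1$ iff $d(\det df)_{x_0}\ne 0$ for every $x_0\in(\det df)^{-1}(0)$, which is precisely the statement that $0$ is a regular value of $\det(df)$.

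The main obstacle—really the only point requiring care—is the reduction from ``transverse to the subbundle $S_1$'' to ``transverse to the rank-one locus in a single fiber.'' I would justify it cleanly by choosing a local trivialization $J^1(\R^2,\R^2)|_U \cong U\times\R^2\times\Hom(\R^2,\R^2)$ over a neighborhood $U$ of $x_0$, under which $S_1$ becomes $U\times\R^2\times\Sigma_1$ and $df$ becomes $x\mapsto (x, f(x), df_x)$. Because the first two factors $U\times\R^2$ are hit onto by $\operatorname{Im}(d(df)_{x_0})$ automatically (the $U$-factor is covered by the $x\mapsto x$ part) and are entirely contained in the tangent space to $S_1$, the transversality equation collapses to the third factor, where it reads $\operatorname{Im}(d(x\mapsto df_x)_{x_0}) + T_{df_{x_0}}\Sigma_1 = \Hom(\R^2,\R^2)$. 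One then invokes that $\Sigma_1$ is cut out transversally by $\det$ (away from $0$, which is excluded by $\rank\ge 1$), so this is equivalent to $d(\det df)_{x_0}\ne 0$. Everything else is bookkeeping; no hard analysis is involved.
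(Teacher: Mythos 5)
Your proof is correct, and it takes a genuinely different route from the paper. The paper proves the lemma by brute-force coordinates: it parameterizes $S_1$ near $(x_0,df_{x_0})$ as $(x_1,x_2,\left[\begin{smallmatrix}\lambda b&b\\ \lambda d&d\end{smallmatrix}\right])$, writes down spanning vectors for $T_{df(x_0)}S_1$ and for $\operatorname{Im}(d(df)_{x_0})$, packages transversality as a $4\times4$ determinant condition, and then expands that determinant by hand to recognize it as $\partial_{x_1}(\det df)(x_0)\ne0$ or $\partial_{x_2}(\det df)(x_0)\ne0$. You instead argue structurally: because $S_1$ is a \emph{subbundle} over $X\times Y$, the tangent space $T S_1$ contains all the base directions, so the transversality equation quotients to the fiber $\Hom(\R^2,\R^2)$; and there the rank-one locus $\Sigma_1=\{\det=0\}\setminus\{0\}$ (the origin being excluded by $\rank\ge1$) is cut out regularly by $\det$, so $d(df)\pitchfork\Sigma_1$ at $x_0$ if and only if $d(\det\circ df)_{x_0}\ne0$. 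This sidesteps the paper's explicit parametrization of $S_1$ and the determinant expansion entirely, and it makes the mechanism transparent (transversality to a subbundle reduces to fiberwise transversality, and $\Sigma_1$ is a regular hypersurface).

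One small imprecision worth flagging: you say the first two factors $U\times\R^2$ are ``hit onto by $\operatorname{Im}(d(df)_{x_0})$.'' The $U$-factor is, because $d(df)$ covers the identity on the base; but the target factor $\R^2\cong T_{f(x_0)}Y$ is \emph{not} covered at a critical point, since that component of $d(df)_{x_0}$ is $df_{x_0}$, which has rank one. Fortunately this claim is not what your argument actually needs: the relevant fact, which you also state, is that $T_{x_0}U\times T_{f(x_0)}Y\times\{0\}\subset T S_1$, so that $\ker\bigl(\pi_{\Hom}\bigr)\subset TS_1$ and the transversality equation may be projected to the $\Hom(\R^2,\R^2)$ factor. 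With that phrasing tightened, the reduction is airtight and the rest follows immediately.
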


\begin{proof}
This lemma only needs to be proved locally. {\bf The first step} is to construct suitable local coordinates such that we are able to perform calculations. Assume $df$ intersects $S_1$ at $(x_0,df_{x_0})$. Let $(x_1,x_2)$ be a local coordinate of $\R^2$ around ${x_0}$. Let 
\begin{equation*}
    (x_1,x_2,\left[ 
    \begin{aligned}
    a\ \ \ b\\
    c\ \ \ d
    \end{aligned}
    \right])
\end{equation*}
be a local coordinate of $J^1(\R^2,\R^2)$ around $({x_0},df_{x_0})$. Then under theses local coordinates,
\begin{equation*}
    (x_1,x_2,\left[ 
    \begin{aligned}
    a\ \ \ b\\
    c\ \ \ d
    \end{aligned}
    \right])|_{({x_0},df_{x_0})}=(x^0_1,x^0_2,\left[
    \begin{aligned}
    a_0\ \ \ b_0\\
    c_0\ \ \ d_0
    \end{aligned}
    \right])=({x_0},\left[ 
    \begin{aligned}
    \frac{\partial f_1}{\partial x_1}\ \ \ \frac{\partial f_1}{\partial x_2}\\
    \frac{\partial f_2}{\partial x_1}\ \ \ \frac{\partial f_2}{\partial x_2}
    \end{aligned}
    \right]({x_0})).
\end{equation*}
Since $({x_0},df_{x_0})\in S_1$, then $( \frac{\partial f_1}{\partial x_1},\frac{\partial f_2}{\partial x_1})^T$ and $(\frac{\partial f_1}{\partial x_2},\frac{\partial f_2}{\partial x_2})^T$ are linear dependent at $x$. Therefore, there is a number $\lambda_0$ such that either
\begin{equation}\label{equ:4.3}
    \left[ 
    \begin{aligned}
    \frac{\partial f_1}{\partial x_1}\ \ \ \frac{\partial f_1}{\partial x_2}\\
    \frac{\partial f_2}{\partial x_1}\ \ \ \frac{\partial f_2}{\partial x_2}
    \end{aligned}
    \right]({x_0})=
    \left[ 
    \begin{aligned}
    \lambda_0\frac{\partial f_1}{\partial x_2}\ \ \ \frac{\partial f_1}{\partial x_2}\\
    \lambda_0\frac{\partial f_2}{\partial x_2}\ \ \ \frac{\partial f_2}{\partial x_2}
    \end{aligned}
    \right]({x_0}):=\left[ 
    \begin{aligned}
    \lambda_0 b_0\ \ \ b_0\\
    \lambda_0 d_0\ \ \ d_0
    \end{aligned}
    \right]
\end{equation}
{or
\begin{equation}\label{equ:4.3.1}
    \left[ 
    \begin{aligned}
    \frac{\partial f_1}{\partial x_1}\ \ \ \frac{\partial f_1}{\partial x_2}\\
    \frac{\partial f_2}{\partial x_1}\ \ \ \frac{\partial f_2}{\partial x_2}
    \end{aligned}
    \right]({x_0})=
    \left[ 
    \begin{aligned}
    \frac{\partial f_1}{\partial x_2}\ \ \ \lambda_0\frac{\partial f_1}{\partial x_2}\\
    \frac{\partial f_2}{\partial x_2}\ \ \ \lambda_0\frac{\partial f_2}{\partial x_2}
    \end{aligned}
    \right]({x_0}):=\left[ 
    \begin{aligned}
     b_0\ \ \ \lambda_0b_0\\
     d_0\ \ \ \lambda_0d_0
    \end{aligned}
    \right]
\end{equation}
at $({x_0},df_{x_0})$. We first deal with the case in Equation (\ref{equ:4.3}).} Because the fiber of $S_1$ at ${x_0}$ are matrices of rank one, without loss of generality, assume 
\begin{equation*}
    (x_1,x_2,
    \left[ 
    \begin{aligned}
    \lambda b\ \ \ b\\
    \lambda d\ \ \ d
    \end{aligned}
    \right])
\end{equation*}
is the local coordinate of $S_1$ around $({x_0},df_{x_0})$. By counting the number of variables, $S_1$ is a codimension one submanifold in $J^1(\R^2,\R^2)$.\\
{\bf The second step} is to prove the lemma under the coordinates constructed in the first step. In these coordinates, the tangent space of $S_1$ is generated by
\begin{equation}\label{equ:4.1}
    (\frac{\partial}{\partial x_1},0,\left[ 
    \begin{aligned}
    0\ \ \ 0\\
    0\ \ \ 0
    \end{aligned}
    \right]),
    (0,\frac{\partial}{\partial x_2},\left[ 
    \begin{aligned}
    0\ \ \ 0\\
    0\ \ \ 0
    \end{aligned}
    \right]),
    (0,0,\left[ 
    \begin{aligned}
    b\ \ \ 0\\
    d\ \ \ 0
    \end{aligned}
    \right]),
    (0,0,\left[ 
    \begin{aligned}
    \lambda\ \ \ 1\\
    0\ \ \ 0
    \end{aligned}
    \right]),
    (0,0,\left[ 
    \begin{aligned}
    0\ \ \ 0\\
    \lambda\ \ \ 1
    \end{aligned}
    \right]),
\end{equation}
and the tangent space of {$\im(j^1(df))$ at} $(x_1,x_2,df_{(x_1,x_2)})$ is generated by
\begin{equation}\label{equ:4.2}
    (\frac{\partial}{\partial x_1},0,\left[ 
    \begin{aligned}
    \frac{\partial^2 f_1}{\partial x_1^2}\ \ \ \frac{\partial^2 f_1}{\partial x_1x_2}\\
    \frac{\partial^2 f_2}{\partial x_1^2}\ \ \ \frac{\partial^2 f_2}{\partial x_1x_2}
    \end{aligned}
    \right]),
    (0,\frac{\partial}{\partial x_2},\left[ 
    \begin{aligned}
    \frac{\partial^2 f_1}{\partial x_1x_2}\ \ \ \frac{\partial^2 f_1}{\partial x_2^2}\\
    \frac{\partial^2 f_2}{\partial x_1x_2}\ \ \ \frac{\partial^2 f_2}{\partial x_2^2}
    \end{aligned}
    \right]).
\end{equation}
Since $S_1$ is a codimensional one submanifold of $J^1(\R^2,\R^2)$, then the vectors in Equation (\ref{equ:4.1}) (\ref{equ:4.2}) generate the tangent space of $J^1(\R^2,\R^2)$ if and only if one of the vectors in Equation (\ref{equ:4.2}) is linear independent of the vectors in Equation (\ref{equ:4.1}). This is equivalent to
\begin{equation*}
    \det\left[
    \begin{aligned}
    \frac{\partial^2 f_1}{\partial x_1^2}\ \ \ \ \ \ b_0\ \ \ \ \ \lambda_0\ \ \ \ \ 0\\
    \frac{\partial^2 f_1}{\partial x_1x_2}\ \ \ \ \ 0\ \ \ \ \ \ 1\ \ \ \ \ \ 0\\
    \frac{\partial^2 f_2}{\partial x_1^2}\ \ \ \ \ \ d_0\ \ \ \ \ 0\ \ \ \ \ \lambda_0\\
    \frac{\partial^2 f_2}{\partial x_1x_2}\ \ \ \ \ 0\ \ \ \ \ \ 0\ \ \ \ \ \ 1\\
    \end{aligned}
    \right]({x_0})\ne0
\end{equation*}
or
\begin{equation*}
    \det\left[
    \begin{aligned}
    \frac{\partial^2 f_1}{\partial x_1x_2}\ \ \ \ \ b_0\ \ \ \ \ \lambda_0\ \ \ \ \ 0\\
    \frac{\partial^2 f_1}{\partial x_2^2}\ \ \ \ \ \ 0\ \ \ \ \ \ 1\ \ \ \ \ \ 0\\
    \frac{\partial^2 f_2}{\partial x_1x_2}\ \ \ \ \ d_0\ \ \ \ \ 0\ \ \ \ \ \lambda_0\\
    \frac{\partial^2 f_2}{\partial x_2^2}\ \ \ \ \ \ 0\ \ \ \ \ \ 0\ \ \ \ \ \ 1\\
    \end{aligned}
    \right]({x_0})\ne0.
\end{equation*}
After plugging in Equation (\ref{equ:4.3}), we have
\[
(\frac{\partial f_2}{\partial x_1}\frac{\partial^2 f_1}{\partial x_1x_2}-\frac{\partial f_2}{\partial x_2}\frac{\partial^2 f_1}{\partial x_1^2}+\frac{\partial f_1}{\partial x_2}\frac{\partial^2 f_2}{\partial x_1^2}-\frac{\partial f_1}{\partial x_1}\frac{\partial^2 f_2}{\partial x_1x_2})({x_0})\ne0
\]
or
\[
(\frac{\partial f_2}{\partial x_1}\frac{\partial^2 f_1}{\partial x_2^2}-\frac{\partial f_2}{\partial x_2}\frac{\partial^2 f_1}{\partial x_1x_2}+\frac{\partial f_1}{\partial x_2}\frac{\partial^2 f_2}{\partial x_1x_2}-\frac{\partial f_1}{\partial x_1}\frac{\partial^2 f_2}{\partial x_2^2})({x_0})\ne0.
\]
These are the same as
\[
\frac{\partial}{\partial x_1}(\det df_{x})({x_0})\ne0
\]
or
\[
\frac{\partial}{\partial x_2}(\det df_{x})({x_0})\ne0.
\]
In other words, $0$ is a regular value of $\det (df)$ at ${x_0}$ {in this case.\\
Similar calculations apply in the situation of Equation (\ref{equ:4.3.1}) and one can get that $\frac{\partial}{\partial x_1}(\det df_{x})({x_0})\ne0$ or $\frac{\partial}{\partial x_2}(\det df_{x})({x_0})\ne0$. Thus the proof is complete.}
\end{proof}

\begin{defn}\label{def:trans}
Suppose that $g=(g_1,g_2):\R^2\rightarrow\R^2\times\R^2$ is a Lagrangian immersion. A point $x\in\R^2$ is called a {\bf transverse bisingular point} if $dg_1$ is transverse to $S_1$ at $x$.
\end{defn}

Suppose that $g=(g_1,g_2):\R^2\rightarrow\R^2\times\R^2$ is a Lagrangian immersion. Recall that the bisingular set of $g$ is defined as $S_1(g_1)=dg_1^{-1}(S_1)$. If $dg_1$ is transverse to $S_1$, then $S_1(g_1)$ is a smooth submanifold of $\R^2$.

\begin{them}\label{thm1}
Let $g=(g_1,g_2):\R^2\rightarrow\R^2\times\R^2$ be a Lagrangian immersion. Suppose that $g(0,0)=(0,0,0,0)$ and $(0,0)$ is a bisingular point. Then there is a number $\delta>0$, an open neighbourhood $U\subset\R^2$ of $(0,0)$, and a regular homotopy of Lagrangian immersions 
\[
(g_1,g_2)(\cdot,t)=g^t(\cdot)=(g_1^t,g_2^t)(\cdot):\R^2\rightarrow\R^2\times\R^2
\]
for $t\in[-\delta,\delta]$, satisfying
\begin{itemize}
    \item the homotopy starts at $g$, i.e., $g^t|_{t=0}=g$,
    \item  0 is a regular value of the maps
    \[
    \det(dg_i^t)(\cdot,\cdot):U\times (-\delta,\delta)\rightarrow\R \ \text{for}\ i=1,2,
    \]
\end{itemize}
\end{them}

\begin{rmk}
    {The term {\bf regular homotopy} in the above theorem means a smooth one parameter family of immersions. This is not a Hamiltonian isotopy since we first identify a tubular neighbourhood of the Lagrangian immersion with its cotangent bundle and then a Hamilton flow is applied in this cotangent bundle. In contrast, a Hamiltonian isotopy should be a perturbation {along} a Hamiltonian vector field in the ambient space $\R^2\times\R^2$.}
\end{rmk}

\begin{proof}
{\bf The first step} is to set up local coordinates. To distinguish the components of $\R^2\times\R^2$, set 
\[
\R^2_+\times\R^2_-:=\R^2\times\R^2.
\]
Since $g$ is a Lagrangian immersion and $(0,0)$ is a bisingular point, then 
\[
\det(dg_1)(0,0)=\det(dg_2)(0,0)=0.
\]
Because 
\[
\rank(dg)_{(0,0)}=2,\ \  \rank(dg_i)_{(x_1,x_2)}\ge1\ \  \forall (x_1,x_2),
\]
then 
\[
\rank(dg_1)_{(0,0)}=\rank(dg_2)_{(0,0)}=1
\]
and 
\[
\ker (dg_1)_{(0,0)}\oplus\ker(dg_2)_{(0,0)}=T_{(0,0)}\R^2, \ \ker (dg_1)_{(0,0)}\cap\ker (dg_2)_{(0,0)}=\{0\}.
\]
Take a local coordinate $(x_1,x_2)$ of $\R^2$ around $(0,0)$ such that 
\[
\ker (dg_2)_{(0,0)}=\Span\langle\frac{\partial}{\partial x_1}\rangle,\ \ \ker (dg_1)_{(0,0)}=\Span\langle\frac{\partial}{\partial x_2}\rangle.
\]
Therefore 
\[
0\ne(dg_1)_{(0,0)}(\frac{\partial}{\partial x_1})\in T_{(0,0)}\R^2_+,\ \ 0\ne(dg_2)_{(0,0)}(\frac{\partial}{\partial x_2})\in T_{(0,0)}\R^2_-.
\]
So there is a local symplectic coordinate $(y_1,y_2,y_3,y_4)$ in $\R^2_+\times\R^2_-$ such that 
\[
(dg_1)_{(0,0)}(\frac{\partial}{\partial x_1})=\frac{\partial}{\partial y_1},\ \ (dg_2)_{(0,0)}(\frac{\partial}{\partial x_2})=\frac{\partial}{\partial y_3}.
\]
Under these local coordinates, we have
\begin{equation*}
    (dg_1)_{(0,0)}= 
    \begin{bmatrix} 
    1&0\\
    0&0 
    \end{bmatrix},
    (dg_2)_{(0,0)}= 
    \begin{bmatrix} 
    0&1\\
    0&0 
    \end{bmatrix}.
\end{equation*}
{Applying the second remark after the Weinstein tubular neighbourhood theorem \ref{thm:tubu}, the immersion $g|_D$ can be extended to a local symplectic diffeomorphism $G$ as in the following commutative diagram, where $D\subset\R^2$ is a small open disc of the origin:
\begin{equation}\label{diag:1}
    \xymatrix{
    T_\varepsilon^*\mathbb R^2|_D \ar[dr]^G & \\
    D \ar[r]^{g|_D\ \ \ \ } \ar[u] & \mathbb R^2_+\times\mathbb R^2_-,}
\end{equation}
here $T_\varepsilon^*\mathbb R^2$ is an $\varepsilon$-tubular neighbourhood of the zero-section of $T^*\R^2$. Then the local coordinate $(x_1,x_2)$ of $D$ can be extended to a local coordinate $(x_1,x_2,x_3,x_4)$ of $T_\varepsilon^*\mathbb R^2|_D$, where $x_3$ and $x_4$ are the coordinates for $dx_1$ and $dx_2$, resp. Then 
\[
G(x_1,x_2,0,0)=g(x_1,x_2)
\]
in these coordinates.\\
{\bf The second step} is to construct the homotopy with the above local coordinates. Recall that given a smooth function $h:D\rightarrow\R$, $dh$ is a Lagrangian submanifold of $T^*\R^2|_D$. If $h$ is compactly supported near the origin, then $h$ can be extended trivially to $\R^2$ and $dh$ coincides with the 0-section of $T_\varepsilon^*\mathbb R^2|_D$ outside the support of $h$. Therefore after extending $h$ to $\R^2$, one can define }
\begin{equation}\label{equ:4.13}
    g^t(x_1,x_2)=G(x_1,x_2,t\frac{\partial h}{\partial x_1}(x_1,x_2),t\frac{\partial h}{\partial x_2}(x_1,x_2)).
\end{equation}
%Write $g^t$ as $(g_1^t,g_2^t)$. 0 is a generic singular point of $g^t$ if and only if  $f_i:=\det (dg_i^t)(0,0)=0$ and $0$ is a regular value of $f_i$, for $i=1,2$.\\
In the coordinates chosen above, the Jacobian of $G$ at $(0,0,0,0)$ can be written as 
\begin{equation*}
    \begin{bmatrix} 
    1&0&a&b\\
    0&0&c&d\\
    0&1&p&q\\
    0&0&r&s
    \end{bmatrix}.
\end{equation*}
Since $dG$ is of full rank, then 
\begin{equation}\label{equ:4.12}
    \det(\begin{bmatrix}
c&d\\
r&s
\end{bmatrix})=cs-rd\ne0,\ and\ s^2+r^2\ne0.
\end{equation}
For fixed $t$, the Jacobian of $g^t=(g_1^t,g_2^t)$ at $(0,0)$ is
\begin{equation*}
    (dg^t)_{(0,0)}=
    \begin{bmatrix} 
    1&0&a&b\\
    0&0&c&d\\
    0&1&p&q\\
    0&0&r&s
    \end{bmatrix}
    \begin{bmatrix} 
    1&0\\
    0&1\\
    th_{x_1x_1}&th_{x_1x_2}\\
    th_{x_1x_2}&th_{x_2x_2}
    \end{bmatrix}.
\end{equation*}
Then the Jacobian for $g_1^t(x_1,x_2)$ at $(0,0)$ is
\begin{equation*}
    (dg_1^t)_{(0,0)}=
    \begin{bmatrix} 
    1&0\\
    0&0
    \end{bmatrix}+t
    \begin{bmatrix} 
    a&b\\
    c&d
    \end{bmatrix}
    \begin{bmatrix} 
    h_{x_1x_1}&h_{x_1x_2}\\
    h_{x_1x_2}&h_{x_2x_2}
    \end{bmatrix},
\end{equation*}
and the Jacobian for $g_2^t(x_1,x_2)$ at $(0,0)$ is
\begin{equation*}
    (dg_2^t)_{(0,0)}=
    \begin{bmatrix} 
    0&1\\
    0&0
    \end{bmatrix}+t
    \begin{bmatrix} 
    p&q\\
    r&s
    \end{bmatrix}
    \begin{bmatrix} 
    h_{x_1x_1}&h_{x_1x_2}\\
    h_{x_1x_2}&h_{x_2x_2}
    \end{bmatrix}.
\end{equation*}
Thus 
\begin{equation}\label{equ:4.4}
    \begin{aligned}
    \det(dg_1^t){(x_1,x_2)}=t(ch_{x_1x_2}+dh_{x_2x_2})+O(t^2),\\ \det(dg_2^t){(x_1,x_2)}=t(rh_{x_1x_1}+sh_{x_1x_2})+O(t^2).
\end{aligned}
\end{equation}

Let 
\[
h=\frac12rx_1^2+sx_1x_2-\frac12rx_2^2,\]
then 
\[h_{x_1x_1}=r,\ \ h_{x_1x_2}=s,\ \ h_{x_2x_2}=-r.
\]
Therefore by Equation (\ref{equ:4.12}),
\[
ch_{x_1x_2}+dh_{x_2x_2}=cs-rd\ne0,\ \ rh_{x_1x_1}+sh_{x_1x_2}=r^2+s^2\ne0.
\]
As a result, 
\begin{equation}\label{equ:cons}
    \frac{\partial}{\partial t}\det (dg_i^t)\ne0,\ i=1,2.
\end{equation}
This implies that 0 is a regular value for both $\det (dg_1^t)$ and $\det (dg_2^t)$. Since $\det(dg_i^t)$ is smooth, there is an interval $(-\delta,\delta)$ and an open neighbourhood {$U\subset D\subset\R^2$} such that  0 is a regular value of the maps
\[
\det(dg_i)(\cdot,\cdot):U\times (-\delta,\delta)\rightarrow\R,
\]
for $i=1,2$.
\end{proof}

In the above proof, first the Weinstein tubular neighbourhood theorem is applied to get an extension {$G:T_\varepsilon^*\mathbb R^2|_D\rightarrow\R^2_+\times\R^2_-$ of $g|_D$ as in Diagram (\ref{diag:1}) for a small open disc $D$ of the origin in $\R^2$. Then we find a smooth function $h:D\rightarrow\R$ compactly supported near the origin to get a homotopy $g^t$ starting at $g$ as in Equation (\ref{equ:4.13}) such that the conclusions of Theorem \ref{thm1} are satisfied.} Let $U$ be a neighbourhood in {$D\subset\R^2$} and $\delta$ be a positive real number as in Theorem \ref{thm1}.

\begin{defn}\label{def:per1}
Call the triple $(U,h,\delta)$ given above {\bf perturbation data of first type}.
\end{defn}

\subsection{Local to global}\label{sec:loctog}
Consider a Lagrangian immersion $(F,g;F_1\times F_2)$, where $F$, $F_1$, $F_2$ are three closed surfaces. Equip $F_1$, $F_2$, $F_1\times F_2$ with symplectic forms $\omega_1$, $\omega_2$, $\omega_1\times(-\omega_2)$ respectively. By using the Weinstein tubular neighbourhood theorem \ref{thm:tubu}, $g$ can be extended to a local symplectic diffeomorphism $G:T_\varepsilon^*F\rightarrow F_1\times F_2$ as in the following diagram:
\begin{equation}\label{graph:Lag}
    \xymatrix{
    T_\varepsilon^*F \ar[dr]^G & \\
    F \ar[r]^{g\ \ \ \ } \ar[u] &F_1\times F_2,}
\end{equation}
where $T_\varepsilon^*F$ is an $\varepsilon$-neighborhood of the zero section of $T^*F$.
In the previous section it was shown that for any point $x\in F$, there is a triple called perturbation data of first type $(U_x,h_x,\delta_x)$ such that the perturbed function has only transverse singularities in $U_x$. Since $F$ is compact, there are finitely many triples $\{(U_j,h_j,\delta_j)\}_{j=1}^N$ such that $F=\cup_jU_j$. {The method in Golubitsky and Guillemin proving transversality (Chapter 2 Lemma 4.6 \cite{golubitsky2012stable})} is applied to show that there are some parameters so that the perturbation functions can be combined to get a global perturbation such that all bisingular points are transverse bisingular points.

\begin{them}\label{thm2}
Assume $F$, $F_1$, $F_2$ are closed surfaces. Equip $F_1$, $F_2$, $F_1\times F_2$ with symplectic forms $\omega_1$, $\omega_2$, $\omega_1\times(-\omega_2)$ respectively. Let $g=(g_1,g_2):F\rightarrow F_1\times F_2$ be a Lagrangian immersion. Then there is a number $\delta>0$ and a regular homotopy of Lagrangian immersions 
\[
(g_1,g_2)(\cdot,t)=g^t=(g_1^t,g_2^t):F\rightarrow F_1\times F_2\ for\ t\in[0,\delta]
\]
such that the followings hold.
\begin{itemize}
    \item The homotopy starts at $g$, i.e. $g^t|_{t=0}=g$.
    \item The homotopy ends with a function $g^\delta$ whose bisingular set is a smooth submanifold of $F$.
\end{itemize}
\end{them}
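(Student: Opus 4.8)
The plan is to globalize Theorem \ref{thm1} by the standard patching argument of Freed--Uhlenbeck. First I would invoke the Weinstein tubular neighbourhood theorem to fix an extension $G\colon T_\varepsilon^*F\to F_1\times F_2$ as in Diagram (\ref{graph:Lag}). Every perturbation of $g$ of the form we need will be produced by choosing a smooth function $h\colon F\to\R$ and setting $g^t(x)=G(x,t\,dh_x)$ (written in the local fibre coordinates of $T^*F$); since $dh$ is an exact Lagrangian section of $T^*F$ and $G$ is a local symplectomorphism, $g^t$ is automatically a Lagrangian immersion for $t$ small, and it is a regular homotopy starting at $g$. So the problem reduces to finding one global $h$ and one $\delta>0$ such that $0$ is a regular value of $\det(dg_1^\delta)=\det(dg_2^\delta)$ on all of $F$ (the equality of the two determinants is Lemma \ref{lem:det}, so it suffices to control one of them). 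By Corollary \ref{cor:det} this is exactly the statement that the bisingular set $S_1(g_1^\delta)=(\det dg_1^\delta)^{-1}(0)$ is a smooth submanifold.

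Next I would set up the finite cover. By Theorem \ref{thm1} (and its local data, Definition \ref{def:per1}), each point $x\in F$ has perturbation data of first type $(U_x,h_x,\delta_x)$: on $U_x$, using the homotopy built from $h_x$, the number $0$ is a regular value of $\det(dg_i(\cdot,t))$ on $U_x\times(-\delta_x,\delta_x)$. Cover the compact surface $F$ by finitely many such neighbourhoods $\{(U_j,h_j,\delta_j)\}_{j=1}^N$ with $F=\bigcup_j U_j$, choose a partition of unity $\{\rho_j\}$ subordinate to $\{U_j\}$, and consider the $N$-parameter family
\begin{equation*}
g^{\mathbf t}(x)=G\Bigl(x,\ \textstyle\sum_{j=1}^N t_j\, d(\rho_j h_j)_x\Bigr),\qquad \mathbf t=(t_1,\dots,t_N)\in\prod_{j=1}^N(0,\delta_j').
\end{equation*}
For a suitably shrunk box of parameters this is again a Lagrangian immersion for all $\mathbf t$. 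The key observation is that the combined map $D(x,\mathbf t):=\det(dg_1^{\mathbf t})(x)$ is, near any point $x_0$, a smooth function of $(x,\mathbf t)$ whose derivative in the single variable $t_{j_0}$ (for $j_0$ with $\rho_{j_0}(x_0)\neq 0$) is nonzero at $\mathbf t=0$ on the zero set, by the first-order computation in Equation (\ref{equ:4.4})--(\ref{equ:cons}) of Theorem \ref{thm1} applied to the rescaled function $\rho_{j_0}h_{j_0}$. Hence $0$ is a regular value of the total map $D\colon \mathcal{U}\times\prod_j(0,\delta_j')\to\R$, where $\mathcal U$ is a neighbourhood of $F$'s diagonal worth of points; more precisely one argues chart by chart that $dD$ is surjective at every zero of $D$, using compactness to make the parameter box uniform.

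Then I would apply the parametric transversality theorem (Sard, as in Freed--Uhlenbeck Theorem 3.7 \cite{freed2012instantons}): since $0$ is a regular value of the total map $D$, it is a regular value of the restriction $D(\cdot,\mathbf t)$ for almost every $\mathbf t$ in the parameter box; pick such a $\mathbf t_\ast$ with all coordinates positive, reparametrize the segment from $0$ to $\mathbf t_\ast$ as $s\mapsto s\,\mathbf t_\ast$, $s\in[0,\delta]$, and obtain the desired one-parameter regular homotopy $g^s$. By construction $g^0=g$, each $g^s$ is a Lagrangian immersion, and $(\det dg_1^{\delta})^{-1}(0)$ is a smooth submanifold of $F$ by the transversality theorem; by Lemma \ref{lem:det} the same holds for $g_2^{\delta}$, so the common bisingular set of $g^\delta$ is a smooth submanifold. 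I expect the main obstacle to be the bookkeeping that makes the parametric argument genuinely global: one must check that after multiplying $h_j$ by the cutoff $\rho_j$ the first-order nonvanishing in Equation (\ref{equ:cons}) survives at \emph{every} point of $U_j$ where $\rho_j>0$ (not merely at the centre where it was proved), and that the finitely many parameter intervals can be shrunk to a common box on which $g^{\mathbf t}$ stays an immersion and the estimate is uniform; handling the overlaps $U_j\cap U_k$, where several cutoffs are active simultaneously, is where care is needed, but since we only need \emph{some} coordinate direction of $\mathbf t$ to move the determinant transversally, any single index $j$ with $\rho_j(x)>0$ suffices and the argument goes through.
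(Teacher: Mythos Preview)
Your proposal is correct and follows essentially the same route as the paper: Weinstein extension $G$, finite cover by local perturbation data $(U_j,h_j,\delta_j)$, partition of unity, the $N$-parameter family $g^{\mathbf t}=G(\cdot,\sum_j t_j\,d(\rho_jh_j))$, regularity of $0$ for the total map $D(x,\mathbf t)=\det(dg_1^{\mathbf t})(x)$, and then Sard on the projection $D^{-1}(0)\to(0,\delta)^N$ to extract a good parameter.

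The one refinement the paper makes that resolves the obstacle you flag at the end is to shrink the cover before choosing the cutoffs: pick open sets $V_j\subset\bar V_j\subset U_j$ still covering $F$, and take $\rho_j$ with $\rho_j\equiv 1$ on $V_j$ (and $\rho_j=0$ outside $U_j$). Then for any bisingular point $x\in V_j$ one has $\rho_jh_j=h_j$ on a neighbourhood of $x$, so the first-order computation culminating in Equation~(\ref{equ:cons}) from Theorem~\ref{thm1} applies verbatim to give $\partial_{t_j}D\neq 0$ at $(x,\mathbf 0)$. This sidesteps entirely the issue of what happens where $0<\rho_j<1$, and since every point of $F$ lies in some $V_j$, the parametric transversality step goes through with no further bookkeeping.
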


\begin{rmk}
The number $\delta$ in Theorem \ref{thm2} can be chosen from a dense open subset of a small interval around 0. See the proof for details.
\end{rmk}

\begin{proof}
Assume $x\in F$ is a bisingular point of $g$. Let $G$ be the local symplectomorphism given by the Weinstein tubular neighbourhood theorem as in Diagram (\ref{graph:Lag}). Suppose $(x_1,x_2)$ is a local coordinate around $x$, we can extend this coordinate to a local coordinate $(x_1,x_2,x_3,x_4)$ of $T^*_\varepsilon F$, where $x_3$ and $x_4$ represent the coordinates corresponding to $dx_1$ and $dx_2$, resp. Then 
\[
G(x_1,x_2,0,0)=g(x_1,x_2).
\]
The goal is to find some smooth function 
\[
h:F\rightarrow\R
\]
such that
\begin{equation}\label{equ:G}
    (G_1(x_1,x_2),G_2(x_1,x_2)):=G(x_1,x_2,\frac{\partial h}{\partial x_1}(x_1,x_2),\frac{\partial h}{\partial x_2}(x_1,x_2))
\end{equation} 
has only transverse bisingular points.\\
For any point $x\in F$, there is a triple of perturbation data of first type $(U_x,h_x,\delta_x)$ (Definition \ref{def:per1}) constructed in the proof of Theorem \ref{thm1}. Since $F=\cup_xU_x$ is compact, therefore there is a finite cover $\{U_j\}_{j=1}^N$ of $F$. Then there is a finite family of such triples $\{(U_j,h_j,\delta_j)\}_{j=1}^N$ corresponding to $\{U_j\}_{j=1}^N$. Since there are finite many $j$'s, the $\delta_j$ can be replaced by $\delta:=\min_j(\delta_j)$ in each triple. Let $V_j$ be a subset of $U_j$  such that 
\[
V_j\subset\bar{V}_j\subset U_j\ \  \text{for}\ \ j=1,\ldots,N.
\]
Assume further $F=\cup_jV_j$. Take a set of partition of unity functions $\{\rho_j\}_{j=1}^N$ such that
\begin{itemize}
    \item $\sum_j\rho_j=1$,
    \item $\rho_j|_{V_j}=1$,
    \item $\rho_j=0$ outside $U_j$ for all $j$.
\end{itemize}
Notice that $V_j\subset U_j$, $\rho_jh_j|_{V_j}=h_j|_{V_j}$, and the triple $(U_j,h_j,\delta_j)$ is perturbation data of first type of $g|_{U_j}$. Then $(V_j,\rho_jh_j,\delta)$ is perturbation data of second type for $g|_{V_j}$. Let 
\[
h:=\sum_jt_j\rho_jh_j
\]
and plug $h$ in Equation (\ref{equ:G}) to get $G_1$ and $G_2$, for $(t_1,\ldots,t_N)\in(0,\delta)^N$.\\
According to Lemma \ref{lem:trancri}, only need to show 0 is a regular value for $G_1$ and $G_2$ with some fixed $(t_1,\ldots,t_N)\in(0,\delta)^N$. {To achieve this, one needs to assign a Riemannian structure on these 3 surfaces such that the determinant of smooth maps is well defined so that one can define the following map for $i=1,2$}:
\begin{align*}
    H_i:(0,\delta)^N\times F&\rightarrow \mathbb R\\
    (t_1,\ldots,t_N,x_1,x_2)&\mapsto \det(dG_i)(x_1,x_2).
\end{align*}
First we show that both $H_1$ and $H_2$ have 0 as a regular value. Let $x=(x_1,x_2)\in V_j\subset F$ be a bisingular point. Since $(V_j,\rho_jh_j,\delta)$ is the perturbation data of first type, according to the construction of the perturbation in Theorem \ref{thm1} (see Equation (\ref{equ:cons})),
\[
\frac{\partial}{\partial t_j}\det(dG_i)(x_1,x_2)\ne0,\ \text{for}\ (t_1,\ldots,t_N)=(0,\ldots,0).
\]
This implies that 0 is a regular value for $H_i|_{(0,\delta)^N\times V_j}$ with $\delta$ sufficiently small, $i=1,2$. Because this holds for any bisingular points $x$, therefore 0 is a regular value for both $H_1$ and $H_2$ with small enough $\delta$.\\
It remains to show that there is a vector $t^0:=(t_1^0,\ldots,t^0_N)$ such that 
\begin{equation}\label{equ:van}
    (dH_i)_{(t^0,x_1,x_2)}(v_0)\ne0,
\end{equation}
for all $(x_1,x_2)\in F$ and some $v_0\in T_{(t_0,x_1,x_2)}F\subset T_{(t_0,x_1,x_2)}((0,\delta)^N\times F)$ with $i=1,2$. Since $0$ is a regular value of $H_1$ and $H_2$, then $H_1^{-1}(0)$ and $H_2^{-1}(0)$ are both smooth manifolds. Consider the surjective projections 
\[
\pi_i:H_i^{-1}(0)\rightarrow (0,\delta)^N,\ i=1,2.
\]
By Sard's theorem, the regular value for both maps are dense and open. Take $t^0=(t_1^0,\ldots,t^0_N)$ in the regular value set, then $\pi_i^{-1}(t^0)\subset H_i^{-1}(0)$ is a smooth one dimensional submanifold for $i=1,2$. Suppose $(x_1,x_2)\in \pi_i^{-1}(t^0)$. Given $v_1,\ldots,v_N\in T_{(t^0,x_1,x_2)}H^{-1}_i(0)$, we have
\begin{equation}\label{equ:5}
    (dH_i)_{(t^0,x_1,x_2)}(v_1)=\ldots=(dH_i)_{(t^0,x_1,x_2)}(v_N)=0.
\end{equation}
Since $t^0$ is a regular value of $\pi_i$ for $i=1,2$, assume  that
\begin{equation}\label{equ:6}
    (d\pi_i)_*v_1=\frac\partial{\partial t_1},\ldots,(d\pi_i)_*v_N=\frac\partial{\partial t_N},
\end{equation}
for $i=1,2$. Suppose Equation (\ref{equ:van}) is not true, then
\begin{equation}\label{equ:7}
    (dH_i)_{(t^0,x_1,x_2)}(\frac\partial{\partial x_1})=0\ \text{and}\ (dH_i)_{(t^0,x_1,x_2)}(\frac\partial{\partial x_2})=0,\ i=1,2.
\end{equation}
Equations (\ref{equ:5}),  (\ref{equ:6}), (\ref{equ:7}) imply that
\begin{align*}
    &T_{(t^0,x_1,x_2)}H^{-1}_i(0)\\
    \supset&\Span\langle v_1,\ldots,v_N,\frac\partial{\partial x_1},\frac\partial{\partial x_2}\rangle\\
    =&\Span\langle\frac\partial{\partial t_1},\ldots,\frac\partial{\partial t_N},\frac\partial{\partial x_1},\frac\partial{\partial x_2}\rangle\\
    =&T_{(t^0,x_1,x_2)}((0,\delta)^N\times F),
\end{align*}
which is impossible. Therefore Equation (\ref{equ:van}) must hold. The conclusion follows.
\end{proof}

\section{Immersion of the bisingular set}\label{sec:immerofbisin}
Let $F$, $F_1$, and $F_2$ be closed surfaces. Equip $F_1$, $F_2$, $F_1\times F_2$ with symplectic forms $\omega_1$, $\omega_2$, $\omega_1\times(-\omega_2)$ respectively. The previous section shows that a given Lagrangian immersion $(F,g;F_1\times F_2)$ can be perturbed such that its bisingular set $S$ is a smooth one dimensional submanifold of $F$. The task of this section is to show that $g$ can be perturbed further into $\tilde g=(\tilde g_1,\tilde g_2)$ such that the composition 
\[
S\hookrightarrow F\looparrowright F_1\times F_2\rightarrow F_1
\]
is an immersion except at finitely many points and $j^2(\tilde{g_i})$ is transverse to $S_{1,1}(F,F_i)$ at these points for $i=1$ or 2. Once this is achieved, then Whitney's theorem (Theorem \ref{thm:g}, \ref{thm:2ordtran}) implies that these finitely many points are cusp points for $\tilde g_1$ or $\tilde g_2$ and the remaining points in the bisingular set are bifold points. For the reader's convenience, these theorems are restated below.

\begin{them}[Whitney]
Suppose that $f:X\rightarrow Y$ is a smooth map between surfaces. Assume $df$ is transverse to $S_1$. If the restriction of $f$ to $S_1(f)$ is an immersion, then $x$ is a fold point.
\end{them}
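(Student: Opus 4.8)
The plan is to reach the fold normal form $(x_1,x_2)\mapsto(x_1,x_2^2)$ by a chain of local coordinate changes on source and target, using the two hypotheses only to control the second derivative in the kernel direction. Since $x\in S_1(f)$, the differential $df_x$ has rank exactly one, so after linear changes of coordinates on $T_xX$ and $T_{f(x)}Y$ we may assume $df_x=\left(\begin{smallmatrix}1&0\\ 0&0\end{smallmatrix}\right)$; in particular $\ker df_x=\Span\langle\partial/\partial x_2\rangle$ and $\partial f_1/\partial x_1(x)\neq0$. Then $(x_1,x_2)\mapsto(f_1(x_1,x_2),x_2)$ is a local diffeomorphism of the source near $x$, and in the corresponding coordinates $f$ takes the form
\[
f(x_1,x_2)=(x_1,\,g(x_1,x_2))
\]
with $x=0$, $g(0)=0$, and (since coordinate changes preserve rank) $\partial g/\partial x_2(0)=0$. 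Writing $\partial_i g:=\partial g/\partial x_i$, one has $df=\left(\begin{smallmatrix}1&0\\ \partial_1 g&\partial_2 g\end{smallmatrix}\right)$, hence $\det df=\partial_2 g$ and $S_1(f)=\{\partial_2 g=0\}$.

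Next I would convert the two hypotheses into derivative conditions at $0$. Shrinking the neighbourhood so that $\rank df\geq1$ throughout (automatic by lower semicontinuity of rank near a rank-one point), Lemma~\ref{lem:trancri} identifies transversality of $df$ to $S_1$ with $0$ being a regular value of $\det df=\partial_2 g$; in particular $S_1(f)$ is a smooth curve through $0$ with $T_0S_1(f)=\ker d(\partial_2 g)_0$. Along $S_1(f)$ one has $\ker df=\Span\langle\partial/\partial x_2\rangle$, so $f|_{S_1(f)}$ being an immersion at $0$ means $T_0S_1(f)\neq\Span\langle\partial/\partial x_2\rangle$, which amounts exactly to $\partial_2^2 g(0)\neq0$. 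Since $\partial_2 g(0)=0$ and $\partial_2^2 g(0)\neq0$, the implicit function theorem solves $\partial_2 g(x_1,x_2)=0$ as $x_2=\phi(x_1)$ with $\phi(0)=0$, and the substitution $x_2\mapsto x_2-\phi(x_1)$ — which leaves the first component equal to $x_1$ — arranges $S_1(f)=\{x_2=0\}$, so that $\partial_2 g(x_1,0)\equiv0$ near $0$.

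A Hadamard-type factorization then finishes the proof. Because $\partial_2 g(x_1,0)\equiv0$, the Taylor expansion of $g$ in $x_2$ has no linear term, and integrating the remainder twice gives
\[
g(x_1,x_2)=g(x_1,0)+x_2^2\,r(x_1,x_2),\qquad r(0,0)=\tfrac12\,\partial_2^2 g(0,0)\neq0.
\]
Subtracting $g(y_1,0)$ from $y_2$ — a legitimate coordinate change on the target since $y_1=x_1$ along the image of $f$ — replaces $f$ by $(x_1,x_2)\mapsto(x_1,\,x_2^2\,r(x_1,x_2))$. After possibly replacing $y_2$ by $-y_2$ we may assume $r>0$ near $0$; then $(x_1,x_2)\mapsto(x_1,\,x_2\sqrt{r(x_1,x_2)})$ has Jacobian $\left(\begin{smallmatrix}1&0\\ 0&\sqrt{r(x_1,0)}\end{smallmatrix}\right)$ along $\{x_2=0\}$ and is therefore a local diffeomorphism near $0$, in whose source coordinates $f$ becomes $(x_1,x_2)\mapsto(x_1,x_2^2)$. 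Hence $x=0$ is a fold point.

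I expect the only real friction to be bookkeeping: showing that transversality of $df$ to $S_1$ and immersivity of $f|_{S_1(f)}$ are genuinely equivalent to the two pointwise conditions $\partial_2 g(0)=0$ and $\partial_2^2 g(0)\neq0$ (the step where both hypotheses are consumed), and checking that each successive coordinate change is an honest local diffeomorphism preserving the normal form already achieved for the first component. The closing square-root substitution — valid precisely because $r(0,0)\neq0$ — is the step that turns the immersion hypothesis into the quadratic dependence on $x_2$.
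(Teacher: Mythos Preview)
Your argument is correct and is essentially the classical proof of Whitney's fold theorem: straighten the first component, use Lemma~\ref{lem:trancri} to read transversality as regularity of $\det df=\partial_2 g$, translate the immersion hypothesis into $\partial_2^2 g(0)\neq0$, straighten $S_1(f)$ to $\{x_2=0\}$ via the implicit function theorem, factor out $x_2^2$ by Hadamard, and finish with the Morse-type square-root substitution. Each coordinate change is a genuine local diffeomorphism and preserves the first component as claimed; the identification of the two hypotheses with the pointwise conditions $\partial_2 g(0)=0$, $\partial_2^2 g(0)\neq0$ is exactly right.

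Note, however, that the paper does not supply its own proof of this theorem: both occurrences (Theorem~\ref{thm:g} and its restatement at the start of Section~\ref{sec:immerofbisin}) defer to Golubitsky--Guillemin \cite{golubitsky2012stable}. Your write-up is in the same spirit as the proof given there (the ``Morse lemma with parameters'' approach), so there is no genuine divergence to compare; you have simply filled in what the paper leaves to the reference.
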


\begin{them}[Whitney, cf. Golubitsky and Guillemin \cite{golubitsky2012stable} Section 4, Chapter 6]
Let $f:X\rightarrow Y$ be a smooth map between surfaces. Suppose $df$ and $j^2(f)$ intersect $S_1$ and $S_{1,1}$ transversely, respectively. Then 
\begin{itemize}
    \item $S_{1,1}(f)$ is a zero dimensional submanifold of $S_1(f)$,
    \item there are local coordinates around $x\in S_{1,1}(f)$ and $f(x)$ such that $f$ locally can be written as
    \[
    f(x_1,x_2)=(x_1,x_1x_2+x_2^3).
    \]
    In particular, $x$ is a {\bf cusp point}.
\end{itemize}
\end{them}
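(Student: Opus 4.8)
The plan is to treat the two conclusions separately. The first is a dimension count furnished by transversality, and the second is a local normal-form reduction carried out in coordinates adapted to the singular set.

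For the first bullet: transversality of $df$ to $S_1$ makes $S_1(f)=(df)^{-1}(S_1)$ a smooth submanifold of $X$ of codimension one, matching that of $S_1$ in $J^1(X,Y)$. By Theorem~\ref{thm:2.8} we have $S_{1,1}(f)=\{x\in X:j^2(f)(x)\in S_{1,1}\}$, and by Remark~\ref{rmk:2.2} the set $S_{1,1}$ is a submanifold of $J^2(X,Y)$ of codimension two; since $j^2(f)$ is transverse to $S_{1,1}$, the Transversality Theorem gives that $S_{1,1}(f)$ is a submanifold of $X$ of codimension two, hence zero-dimensional (and finite when $X$ is compact). Because $S_{1,1}\subset J^2(X,Y)$ projects into $S_1\subset J^1(X,Y)$, we also get $S_{1,1}(f)\subset S_1(f)$, so $S_{1,1}(f)$ is a zero-dimensional submanifold of $S_1(f)$, which is the first claim.

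For the normal form, fix $x\in S_{1,1}(f)$. Since $\rank df_x=1$, after linear changes of coordinates in source and target I may assume $df_x=\begin{pmatrix}1&0\\0&0\end{pmatrix}$; then, using $f_1$ as the first coordinate on $Y$, write $f(x_1,x_2)=(x_1,g(x_1,x_2))$ near the origin with $g_{x_1}(0)=g_{x_2}(0)=0$ and $\ker df_0=\langle\partial/\partial x_2\rangle$. I would next translate the three transversality hypotheses into conditions on the partial derivatives of $g$ at the origin. By Lemma~\ref{lem:trancri}, transversality of $df$ to $S_1$ means $0$ is a regular value of $\det df=g_{x_2}$. The relation $T_xS_1(f)=\ker df_x$ (i.e.\ $x\in S_{1,1}(f)$) forces $g_{x_2x_2}(0)=0$ --- this is precisely the computation in Remark~\ref{rmk:2.2} --- and combined with regularity of $g_{x_2}$ it gives $g_{x_1x_2}(0)\neq0$. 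Finally, transversality of $j^2(f)$ to $S_{1,1}$ is equivalent to $0$ being a regular value of the restriction of $g_{x_2x_2}$ to the curve $S_1(f)=\{g_{x_2}=0\}$; since that curve is tangent to $\partial/\partial x_2$ at $0$ (because $g_{x_1x_2}(0)\neq0$ while $g_{x_2x_2}(0)=0$), this reduces to $g_{x_2x_2x_2}(0)\neq0$.

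With the data $g_{x_1}(0)=g_{x_2}(0)=g_{x_2x_2}(0)=0$, $g_{x_1x_2}(0)\neq0$, $g_{x_2x_2x_2}(0)\neq0$ in hand, I would bring $f$ to the form $(x_1,x_1x_2+x_2^3)$ as follows. Replacing $y_2$ by $y_2-g(y_1,0)$ makes $g(x_1,0)\equiv0$. For each fixed $x_1$ near $0$ the function $x_2\mapsto g_{x_2}(x_1,x_2)$ has a nondegenerate critical point at some $x_2=b(x_1)$ with $b(0)=0$, and by the above the critical-value function $x_1\mapsto g_{x_2}(x_1,b(x_1))$ has nonzero derivative at $0$, hence is a local diffeomorphism. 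Applying the Morse Lemma with parameters in the $x_2$-direction, rescaling, reparametrizing $x_1$ by that diffeomorphism, and matching the reparametrization on the first target coordinate, one checks that the $3$-jet of $f$ at $x$ agrees with the $3$-jet of $(x_1,x_1x_2+x_2^3)$, up to replacing some coordinates by their negatives to fix the sign of $g_{x_2x_2x_2}(0)$. Since the cusp map-germ $(x_1,x_2)\mapsto(x_1,x_1x_2+x_2^3)$ is finitely $\mathcal A$-determined --- in fact $3$-determined, as shown in \cite{golubitsky2012stable} --- equality of $3$-jets forces $f$ to be equivalent, near $x$, to $(x_1,x_1x_2+x_2^3)$ by changes of coordinates in source and target; thus $x$ is a cusp point. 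The step I expect to be the main obstacle is exactly this last one: carrying out the parametrized Morse lemma while preserving the first coordinate of $f$ --- which couples the reparametrization of the source variable $x_1$ to a reparametrization of the target variable $y_1$ --- and then absorbing the higher-order remainder via finite determinacy. Everything earlier --- the dimension count and the passage from the transversality hypotheses to the three conditions on $g$ --- is essentially the bookkeeping already contained in Lemma~\ref{lem:trancri} and Remark~\ref{rmk:2.2}. (One should also note, for the surrounding argument, that combining the second bullet with Whitney's fold theorem, Theorem~\ref{thm:g}, shows that every point of $S_1(f)\setminus S_{1,1}(f)$ is a fold point.)
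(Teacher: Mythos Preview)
The paper does not actually prove this theorem: it is stated with attribution to Whitney and a citation to Golubitsky--Guillemin, and the surrounding Remark~\ref{rmk:2.2} only explains why $S_{1,1}$ has codimension two in $J^2(X,Y)$. So there is no ``paper's own proof'' to compare against; you are supplying what the paper deliberately outsources to the reference.

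Your outline is the standard one and is essentially correct. The first bullet is exactly the dimension count the paper sets up via Remark~\ref{rmk:2.2} and Theorem~\ref{thm:2.8}. For the second bullet, your translation of the hypotheses into the conditions $g_{x_2}(0)=g_{x_2x_2}(0)=0$, $g_{x_1x_2}(0)\neq0$, $g_{x_2x_2x_2}(0)\neq0$ is right, and these are precisely the nondegeneracy conditions that characterize the cusp. One small caution: your ``Morse lemma with parameters'' is being applied to $g_{x_2}$ (as a function of $x_2$), not to $g$, and passing from a normal form for $g_{x_2}$ back to one for $g$ requires care, since the change of $x_2$-coordinate that straightens $g_{x_2}$ does not literally integrate to the desired form for $g$. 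The clean way (and what Golubitsky--Guillemin actually do) is either to invoke the Malgrange preparation theorem to write $g$ in the form $x_2^3+a(x_1)x_2+b(x_1)$ after a fibered change of coordinates, or to appeal directly to $3$-$\mathcal A$-determinacy of the cusp germ as you indicate. Either route closes the argument; just be aware that the intermediate ``integrate the Morse normal form'' step as you phrased it is heuristic rather than literal.
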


The first step is to prove the local case, where $F=F_1=F_2=\R^2$. The second step is to provide a globalized version, using a similar argument as in Section \ref{sec:loctog}.

\begin{them}\label{thm:locim}
Let $g=(g_1,g_2):\R^2\rightarrow\R^2\times\R^2$ be a Lagrangian immersion. Suppose $(0,0)\in \R^2$ is a transverse bisingular point. Then there is a number $\delta>0$ and regular homotopy of Lagrangian immersions 
\[
g^t=(g_1^t,g_2^t):\R^2\rightarrow\R^2\times\R^2,\ t\in[0,\delta]
\]
such that the followings hold.
\begin{itemize}
    \item The homotopy starts at $g$, i.e. $g^t|_{t=0}=g$.
    \item For each fixed $t$, consider the 2-jet map $j^2(g^t_i):\R^2\rightarrow J^2(\R^2,\R^2)$ of $g^t_i$ for $i=1,2$. When $t$ varies, there is a number $\delta>0$ and an open neighbourhood $U$ of $(0,0)$ such that the following map
    \[
    j^2(g^*_i|_U):(-\delta,\delta)\times U\rightarrow J^2(\R^2,\R^2)
    \]
    is transverse to $S_{1,1}$ for $i=1,2$.
\end{itemize}
\end{them}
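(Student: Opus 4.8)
The plan is to imitate the proof of Theorem \ref{thm1}: use the Weinstein tubular neighbourhood theorem to extend $g$ to a local symplectomorphism $G:T^*_\varepsilon\R^2\to\R^2\times\R^2$ and perturb inside the cotangent fibres, so that the perturbed maps are automatically Lagrangian immersions. Concretely, I would install the local coordinates $(x_1,x_2)$ used in Theorem \ref{thm1} (so that $(dg_1)_{(0,0)}$ and $(dg_2)_{(0,0)}$ are in the normal forms there, $\ker(dg_1)_{(0,0)}=\langle\partial_{x_2}\rangle$ and $\ker(dg_2)_{(0,0)}=\langle\partial_{x_1}\rangle$), extend them to coordinates $(x_1,x_2,x_3,x_4)$ on $T^*_\varepsilon\R^2$, and consider the family
\[
g^{t,\sigma}(x)=G\bigl(x,\ t\,dh_\sigma(x)\bigr),\qquad h_\sigma=\sum_{k}\sigma_k h_k,
\]
where $\{h_k\}$ is a fixed finite set of functions on $\R^2$ (for instance all monomials of degree $\le 3$) and $\sigma$ is an auxiliary parameter. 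For each $\sigma$ and each small $t$ this is a regular homotopy of Lagrangian immersions with $g^{0,\sigma}=g$. Since $(0,0)$ is a transverse bisingular point and transversality of $dg_i$ to $S_1$ is an open condition, after shrinking $U$ and $\delta$ every $dg_i^{t}$ is transverse to $S_1$ on $U$; hence the family map $(t,x)\mapsto dg_i^{t}(x)$ is transverse to $S_1$ and $\Sigma:=\{(t,x)\in(-\delta,\delta)\times U:\det(dg_1^{t})(x)=0\}$ is a smooth surface, which by Lemma \ref{lem:det} does not depend on $i$.

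The next step reduces transversality to $S_{1,1}$ to a scalar regularity statement. By Remark \ref{rmk:2.2}, near the relevant jets $S_{1,1}$ is cut out by two equations: the one defining $S_1$ (already transverse, by the previous paragraph), and the vanishing of the cokernel component of the second derivative of $g_i^{t}$ along $\ker dg_i^{t}$. Writing $\tau_i:\Sigma\to\R$ for this last quantity (equivalently, the derivative of $\det(dg_i^{t})$ along a local nonvanishing section of $\ker dg_i^{t}$), one checks, using Theorem \ref{thm:2.8} and Remark \ref{rmk:2.2}, that $j^2(g_i^{*}|_U)$ is transverse to $S_{1,1}$ if and only if $0$ is a regular value of $\tau_i:\Sigma\to\R$, its zero set then being the locus of cusps of the maps $g_i^{t}$. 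It therefore suffices to choose the perturbation so that $0$ is simultaneously a regular value of $\tau_1$ and of $\tau_2$ on $\Sigma$.

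For $t\ne0$ this is controlled by the parameters $\sigma$: the $2$-jet of $g_i^{t,\sigma}$ depends on the fixed $2$-jet of $G$ together with the $3$-jet of $t\,h_\sigma$, so choosing the $h_k$ to span the $3$-jets at a point makes the $\sigma$-differential of the universal map
\[
\Psi_i:(-\delta,\delta)_t\times\R^m_\sigma\times U\longrightarrow J^2(\R^2,\R^2),\qquad (t,\sigma,x)\mapsto j^2(g_i^{t,\sigma})(x),
\]
surject onto the $\tau_i$-normal line of $S_{1,1}$ wherever $t\ne0$ (using the invertibility of the relevant block of $dG$, as in Theorem \ref{thm1}). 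At $t=0$ the $\sigma$-derivatives vanish, but there $j^2(g_i^{0,\sigma})=j^2(g_i)$, the $x$-derivatives already cover the $S_1$-normal direction (transverse bisingularity), and the remaining direction is supplied by an explicit choice of the linear-in-$t$ term, exactly as in Theorem \ref{thm1}: using the nonvanishing entries of $dG$ at $(0,0,0,0)$ one arranges $\partial_t\tau_i|_{t=0}\ne0$ at the finitely many points of $S_1(g_i)$ where $\tau_i$ already vanishes. Granting that $\Psi_i$ is transverse to $S_{1,1}$ for $i=1,2$, the parametric transversality theorem (used already in the proof of Theorem \ref{thm2}) yields, for almost every $\sigma$, that the slice $(t,x)\mapsto j^2(g_i^{t,\sigma})(x)$ is transverse to $S_{1,1}$; intersecting the two full-measure sets of $\sigma$, fixing one such $\sigma$, setting $g^{t}:=g^{t,\sigma}$, and shrinking $U$ and $\delta$ finishes the proof.

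The main obstacle is the previous paragraph: showing that Hamiltonian perturbations — which, by the Lagrangian constraint, are forced to keep $\det(dg_1^{t})=\det(dg_2^{t})$ and hence to share the surface $\Sigma$ — are nevertheless rich enough to make $\tau_1$ and $\tau_2$ regular at $0$ at the same time. The reason to expect this is that $\tau_i$ detects only the second-order behaviour of $g_i^{t}$ along $\ker dg_i^{t}$, and since $\ker dg_1$ and $\ker dg_2$ are transverse near $(0,0)$, the $3$-jet of $h_\sigma$ can be chosen so as to move the two quantities independently; carrying out this decoupling explicitly, together with the bookkeeping on the $t=0$ locus where only $\partial_t$ is available, is where the genuine work lies. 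A lesser point is the justification of the equivalence "$j^2(g_i^{*}|_U)$ transverse to $S_{1,1}$" $\iff$ "$0$ a regular value of $\tau_i|_\Sigma$", which rests on the local normal form for $S_{1,1}$ in Remark \ref{rmk:2.2}.
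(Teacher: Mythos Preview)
Your proposal heads in the right direction but is more elaborate than needed, and the difficulty you isolate at the end---making $\tau_1$ and $\tau_2$ simultaneously regular---is a phantom. The paper dissolves it in one line: since $g$ is an immersion, $\ker(dg_1)_{(0,0)}\cap\ker(dg_2)_{(0,0)}=\{0\}$, and both kernels are one-dimensional at a bisingular point, so the one-dimensional space $T_{(0,0)}S$ can lie in at most one of them. Hence at most one of $g_1|_S$, $g_2|_S$ fails to be an immersion at the origin; for the other index $i$ the image of $j^2(g_i^t)$ misses $S_{1,1}$ near $(0,(0,0))$ and transversality is vacuous there. No decoupling is required.

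Granting that (say) $i=1$ is the nontrivial case, the paper dispenses with the auxiliary family $\{h_k\}$ and the Sard step entirely. It chooses coordinates so that $\ker(dg_1)_{(0,0)}=\langle\partial_{x_3}\rangle$, the tangent $T_{(0,0)}S$ is also $\langle\partial_{x_3}\rangle$, and $(dG)_{(0,0,0,0)}=\mathrm{Id}$; then it takes the single explicit function $h(x_1,x_3)=x_1x_3^2$. Because all second partials of $h$ vanish at the origin, $(dg_1^t)_{(0,0)}$ is independent of $t$, so the origin stays in $S_{1,1}(g_1^t)$ with unchanged kernel and cokernel. A two-line chain-rule computation then gives the cokernel component of the second derivative along the kernel as $2t$, so $\partial_t\tau_1|_{t=0}(0,0)=2\neq0$. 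This is precisely transversality of the one-parameter family $(t,x)\mapsto j^2(g_1^t)(x)$ to $S_{1,1}$ at $(0,(0,0))$, and openness of transversality yields the neighbourhood $(-\delta,\delta)\times U$.

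So: no $\sigma$-parameters, no parametric transversality---one well-chosen cubic and a single $t$-derivative suffice. Your universal-family argument could be pushed through, but the step you flag as ``where the genuine work lies'' is exactly the explicit computation the paper performs directly; once that is in hand, the $\sigma$-family and the Sard argument become redundant.
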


\begin{proof}
Let $S$ be the bisingular set. If $g_1|_S$ and $g_2|_S$ are immersions at the origin, then the origin is a bifold singularity, the theorem is true automatically. Suppose this is not the case. Since $g$ is a Lagrangian immersion, then
\[
\rank(dg)=2.
\]
Therefore 
\[
\ker dg_1\cap\ker dg_2=\{0\}.
\]
In particular, either $g_1|_S$ or $g_2|_S$ has to be an immersion at $(0,0)$, and the other one must not be an immersion at $(0,0)$. Assume that $g_1|_S$ is not an immersion at $(0,0)$, and the case where $g_2|_S$ not being an immersion at $(0,0)$ can be proved similarly.\\
To distinguish the first and the second components of $\R^2\times\R^2$, the notation $\R^2_+\times\R^2_-$ is used instead of $\R^2\times\R^2$. {The second remark after Weinstein tubular neighbourhood theorem implies that the immersion $g|_D$ can be extended to a local symplectic diffeomorphism $G$ as in the following commutative diagram, where $D\subset\R^2$ is an open disc of the origin:
\begin{equation}\label{diag:2}
    \xymatrix{
    T_\varepsilon^*\mathbb R^2|_D \ar[dr]^{\ \ G=(G_1,G_2,G_3,G_4)} & \\
    D \ar[r]^{g|_D\ \ \ \ } \ar[u] & \mathbb R^2_+\times\mathbb R^2_-.}
\end{equation}}
%The strategy is to construct a smooth function $h:\R^2\rightarrow\R$ such that its total boundary map induces a Hamiltonian perturbation, just like in Theorem \ref{thm1}. $h$ is also small enough such that $(0,0)$ is always a generic singular point.
{\bf We want to choose coordinates in the domain and the target of $G$ around the origins such that $(dG)_{(0,0,0,0)}$ is as simple as possible.} Because 
\[
\rank (dg)_{(0,0)}=2,\ \text{and}\ \rank(dg_i)_{(0,0)}\ge1, i=1,2.
\]
As a result, 
\[
\rank(dg_1)_{(0,0)}=\rank(dg_2)_{(0,0)}=1
\]
and 
\[
\ker (dg_1)_{(0,0)}\oplus\ker (dg_2)_{(0,0)}=T_{(0,0)}\R^2,\ \ker (dg_1)_{(0,0)}\cap\ker (dg_2)_{(0,0)}=\{0\}.
\]
Take local coordinate $(x_1,x_3)$ of {$D\subset\R^2$} around $(0,0)$ such that 
\[
\ker (dg_2)_{(0,0)}=\Span\langle\frac{\partial}{\partial x_1}\rangle,\ \ker(dg_1)_{(0,0)}=\Span\langle\frac{\partial}{\partial x_3}\rangle, 
\]
thus 
\[
0\ne(dg_1)_{(0,0)}(\frac{\partial}{\partial x_1})\in T_{(0,0)}\R^2_+,\ 0\ne(dg_2)_{(0,0)}(\frac{\partial}{\partial x_3})\in T_{(0,0)}\R^2_-.
\]
Let $\gamma(s)$ be a parametrization of $S$ and $\gamma(0)=(0,0)$. Because $g_1|_S$ is not an immersion at $(0,0)$, a parametrization $\gamma(s)$ of $S$ can be chosen such that $\dot{\gamma}(0)=\frac{\partial}{\partial x_3}$. Thus there are local symplectic coordinates $(y_1,y_2,y_3,y_4)$ of $\R_+^2\times\R^2_-$ around $(0,0)$ such that 
\[
(dg_1)_{(0,0)}(\frac{\partial}{\partial x_1})=\frac{\partial}{\partial y_1},\ (dg_2)_{(0,0)}(\frac{\partial}{\partial x_3})=\frac{\partial}{\partial y_3}.
\]
The local coordinate of {$D\subset\R^2$} can be extended to a local coordinate $(x_1,x_2,x_3,x_4)$ of {$T_\varepsilon^*\mathbb R^2|_D$} such that $x_2$ and $x_4$ represent the coefficients of $dx_1$ and $dx_3$ respectively with the following assumptions:
\[
(dG)_{(0,0,0,0)}(\frac{\partial}{\partial x_2})=\frac{\partial}{\partial y_2},\ (dG)_{(0,0,0,0)}(\frac{\partial}{\partial x_4})=\frac{\partial}{\partial y_4}.
\]
Then
\begin{equation*}
    (dG)_{(0,0,0,0)}=
    \begin{bmatrix} 
    1&0&0&0\\
    0&1&0&0\\
    0&0&1&0\\
    0&0&0&1
    \end{bmatrix}.
\end{equation*}
{\bf Then we construct the perturbation function needed.} Let 
\begin{align*}
    h:\R^2&\rightarrow\R\\
    (x_1,x_3)&\mapsto x_1x_3^2.
\end{align*}
Then 
\[
dh(x_1,x_3)=x_3^2dx_1+2x_1x_3dx_3.
\]
Define 
\begin{equation}\label{equ:5.10}
    g^t(x_1,x_3)=(g_1^t(x_1,x_3),g_2^t(x_1,x_3))=G(x_1,t\frac{\partial h}{\partial x_1},x_3,t\frac{\partial h}{\partial x_3})=(G^t_1,G^t_2,G^t_3,G^t_4).
\end{equation}
Observe that $g^t(0,0)=(0,0,0,0)$. Since 
\[
\frac{\partial^2h}{\partial x_1^2}(0,0)=\frac{\partial^2h}{\partial x_1x_3}(0,0)=\frac{\partial^2h}{\partial x_3^2}(0,0)=0,
\]
so
\begin{equation}\label{equ:5.1}
    (dg_1^t)_{(0,0)}=\left[
    \begin{aligned}
    \frac{\partial G_1}{\partial x_1}\ \ \frac{\partial G_1}{\partial x_2}\ \ \frac{\partial G_1}{\partial x_3}\ \ \frac{\partial G_1}{\partial x_4}\\
    \frac{\partial G_2}{\partial x_1}\ \ \frac{\partial G_2}{\partial x_2}\ \ \frac{\partial G_2}{\partial x_3}\ \ \frac{\partial G_2}{\partial x_4}\\
    \end{aligned}\right]
    \left[
    \begin{aligned}
    1\ \ \ \ \ \ \ \ 0\ \ \ \ \\
    t\frac{\partial^2h}{\partial x_1^2}\ \ t\frac{\partial^2h}{\partial x_1x_3}\\
    0\ \ \ \ \ \ \ \ 1\ \ \ \ \\
    t\frac{\partial^2h}{\partial x_1x_3}\ \ t\frac{\partial^2h}{\partial x_3^2}\ \\
    \end{aligned}\right](0,0)=\left[
    \begin{aligned}
    1\ \ \ 0\\
    0\ \ \ 0\\
    \end{aligned}\right].
\end{equation}
Therefore $(0,0)$ is a bisingular point of $g^t$ for all small $t$.\\
{\bf Finally we prove the theorem using Theorem \ref{thm:2.8}.} By Equation (\ref{equ:5.1}), define
\[
K:=\ker (dg^t_1)_{(0,0)}=\Span\langle\frac{\partial}{\partial x_3}\rangle,L:=\coker(dg^t_1)_{(0,0)}=\Span\langle\frac{\partial}{\partial y_2}\rangle.
\]
Theorem \ref{thm:2.8} implies that 
\[
(0,0)\in S_{1,1}(g^t_1)\iff j^2(g^t_1)(0,0)\in S_{1,1}.
\]
This is equivalent to say that the induced map of $g^t_1$
\[
\tilde g^t_1:K\rightarrow \Hom(K,L)
\]
has one dimensional kernel. The exact calculation as in the proof of Theorem \ref{thm:2.8} shows that
\begin{equation}\label{equ:5.8}
    j^2(g^t_1)(0,0)\in S_{1,1}\iff \tilde g^t_1(\frac{\partial}{\partial x_3})=0\iff \frac{\partial^2 G^t_2}{\partial x_3^2}(0,0)=0.
\end{equation}
By Remark \ref{rmk:2.2}, $S_{1,1}$ is a codimension two submanifold in $J^2(\R^2,\R^2)$ and one of the two codimensions is given by the codimension of $S_1$ in $J^1(\R^2,\R^2)$. Because $j^1(f)$ is assumed to intersect $S_1$ transversely, therefore $j^2(g^*_1)$ is transverse to $S_{1,1}$ at $(0,0)$ if
\begin{equation}\label{equ:5.2}
    \frac{\partial}{\partial t}\frac{\partial^2 G^t_2}{\partial x_3^2}|_{t=0}(0,0)\ne0.
\end{equation}
Note that 
\begin{align*}
    &\frac{\partial^2 G^t_2}{\partial x_3^2}(0,0)\\
    =&\frac{\partial}{\partial x_3}({t}\frac{\partial G_2}{\partial x_2}\frac{\partial^2 h}{\partial x_1\partial x_3}+\frac{\partial G_2}{\partial x_3}+{t}\frac{\partial G_2}{\partial x_4}\frac{\partial^2 h}{\partial x_3^2})(0,0)\\
    =&({t}\frac{\partial G_2}{\partial x_2}\frac{\partial^3 h}{\partial x_1\partial x_3^2}+\frac{\partial^2 G_2}{\partial x_3^2}+2{t}\frac{\partial^2 G_2}{\partial x_3\partial x_4}\frac{\partial^2 h}{\partial x_3^2}+{t}^2\frac{\partial^2 G_2}{\partial x_4^2}(\frac{\partial^2 h}{\partial x_3^2})^2)(0,0)\\
    =&2t+\frac{\partial^2 G_2}{\partial x_3^2}(0,0)
\end{align*}
According to Equation $(\ref{equ:5.8})$,
\[
0=\frac{\partial^2 G^0_2}{\partial x_3^2}(0,0)=\frac{\partial^2 G_2}{\partial x_3^2}(0,0).
\]
Therefore
\[
\frac{\partial^2 G^t_2}{\partial x_3^2}(0,0)=2t.
\]
Combine this with Equation (\ref{equ:5.2}), the theorem is proved.
\end{proof}

In the above proof, first the Weinstein tubular neighbourhood theorem is applied to get an extension {$G:T_\varepsilon^*\mathbb R^2|_D\rightarrow\R^2_+\times\R^2_-$ of $g|_D$ as in Diagram (\ref{diag:2}). Then we find a smooth function $h:D\rightarrow\R$ compactly supported around the origin to get a homotopy $g^t$ starting at $g$ as in Equation (\ref{equ:5.10}) such that the conclusions of Theorem \ref{thm:locim} are satisfied.} Let $U$ be a neighbourhood in {$D\subset\R^2$} and $\delta$ be a positive real number as in Theorem \ref{thm:locim}.

\begin{defn}\label{def:perdat2}
Call the triple $(U,h,\delta)$ given above {\bf perturbation data of second type}.
\end{defn}

The next theorem corresponds to the global case.

\begin{them}\label{main5}
Assume $F$, $F_1$, $F_2$ are closed surfaces. Equip $F_1$, $F_2$, $F_1\times F_2$ with symplectic forms $\omega_1$, $\omega_2$, $\omega_1\times(-\omega_2)$ respectively. Let $g=(g_1,g_2):F\rightarrow F_1\times F_2$ be a Lagrangian immersion between closed surfaces. Suppose $g$ has transverse bisingular points only. Then there is a number $\delta>0$ and an isotopy of Lagrangian immersions 
\[
g^t=(g_1^t,g_2^t):F\rightarrow F_1\times F_2,\ t\in[0,\delta],
\]
such that the followings hold.
\begin{itemize}
    \item The homotopy starts at $g$, i.e. $g^t|_{t=0}=g$.
    \item The elements in the bisingular set are bifold points of $g^\delta$ except finitely many points.
    \item  These finitely many points are cusp points of $g_1^\delta$ or $g_2^\delta$.
\end{itemize}
\end{them}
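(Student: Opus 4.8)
The plan is to globalize Theorem \ref{thm:locim} by the same partition-of-unity and parametric-transversality scheme that passes from Theorem \ref{thm1} to Theorem \ref{thm2} in Section \ref{sec:loctog}. First I would fix, via the Weinstein tubular neighbourhood theorem \ref{thm:tubu}, an extension of $g$ to a local symplectomorphism $G\colon T_\varepsilon^*F\to F_1\times F_2$ restricting to $g$ on the zero section, so that every smooth $h\colon F\to\R$ gives a Lagrangian immersion $x\mapsto G(dh(x))$ and $s\mapsto G(s\,dh)$ a regular homotopy of Lagrangian immersions starting at $g$. For each point $x$ of the compact bisingular set $S$ that is not already a bifold point, $x$ is by hypothesis a transverse bisingular point, so Theorem \ref{thm:locim} furnishes perturbation data of second type $(U_x,h_x,\delta_x)$; near a bifold point I take $h_x\equiv0$, since there the conclusion is immediate from Theorem \ref{thm:g}. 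By compactness extract a finite subcover $\{U_j\}_{j=1}^N$ with data $\{(U_j,h_j,\delta_j)\}$ and set $\delta:=\min_j\delta_j$.

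Next, as in the proof of Theorem \ref{thm2}, choose $V_j$ with $V_j\subset\bar V_j\subset U_j$ and $F=\bigcup_jV_j$, a partition of unity $\{\rho_j\}$ with $\rho_j|_{V_j}\equiv1$ and $\rho_j=0$ outside $U_j$, put $h=\sum_{j=1}^N\tau_j\rho_jh_j$ for $\tau=(\tau_1,\dots,\tau_N)\in(0,\delta)^N$, and let $g^\tau(x):=G(dh(x))$. Each $g^\tau$ is a Lagrangian immersion and, since $\tau\mapsto g^\tau$ deforms $g^0=g$ through Lagrangian immersions, Corollary \ref{cor:det} gives that its bisingular set $S^\tau$ equals $S_1(g_1^\tau)=S_1(g_2^\tau)$. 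Shrinking $\delta$ once, openness of the immersion condition and of transversality ensures that for all such $\tau$ the maps $g^\tau$ are immersions, $dg_i^\tau$ is transverse to $S_1$ for $i=1,2$ (so $S^\tau$ is a smooth $1$-submanifold close to $S$), and every cusp-type point of $g_i^\tau$ lies in some $V_j$. Form the parametrized second jet
\[
\Phi_i\colon(0,\delta)^N\times F\longrightarrow J^2(F,F_i),\qquad \Phi_i(\tau,x)=j^2\!\big(g_i^\tau\big)(x),\qquad i=1,2,
\]
and claim $\Phi_i$ is transverse to $S_{1,1}$. This is automatic where $\Phi_i(\tau,x)\notin S_{1,1}$; if $\Phi_i(\tau,x)\in S_{1,1}$ then $x\in S^\tau\cap V_j$ for some $j$, and because $\rho_j\equiv1$ near $x$ the derivative in $\tau_j$ reproduces, with base point $g^\tau$ and an adapted Weinstein chart, precisely the one-parameter perturbation analysed in Theorem \ref{thm:locim}. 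That computation used only the invertibility of the chart's differential and the prescribed vanishing pattern of the derivatives of $h_j$, so by continuity, for $\delta$ small, the $\tau_j$-derivative still moves the one obstructing second-order coordinate (the analogue of $\partial^2G_1^t/\partial x_3^2$) off zero, while the remaining codimension of $S_{1,1}$ is transversally attained already because $dg_i^\tau$ is transverse to $S_1$, by the local description of $S_{1,1}$ in Remark \ref{rmk:2.2}. Adjoining the other parameters preserves transversality, so $\Phi_i\pitchfork S_{1,1}$.

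By parametric transversality --- Sard's theorem applied to the projection $\Phi_i^{-1}(S_{1,1})\to(0,\delta)^N$, exactly as in the proof of Theorem \ref{thm2} --- for each $i$ the set of $\tau$ with $j^2(g_i^\tau)$ transverse to $S_{1,1}$ is residual of full measure; I pick $\tau^0$ in the intersection of the two sets and reparametrize by $g^t:=g^{(t/\delta)\tau^0}$ for $t\in[0,\delta]$, an isotopy of Lagrangian immersions from $g$ to $g^\delta=g^{\tau^0}$. For $g^\delta$: since $dg_i^\delta$ is transverse to $S_1$ and $j^2(g_i^\delta)$ is transverse to $S_{1,1}$, Theorem \ref{thm:2.8} identifies $S_{1,1}(g_i^\delta)$ with the finite set $j^2(g_i^\delta)^{-1}(S_{1,1})$, each of whose points is a cusp point of $g_i^\delta$ by Theorem \ref{thm:2ordtran}; at every other point of the compact bisingular set $S^\delta$ the restriction $g_i^\delta|_{S^\delta}$ is an immersion, so Theorem \ref{thm:g} makes that point a fold point of both $g_1^\delta$ and $g_2^\delta$, hence a bifold point. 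Moreover $S_{1,1}(g_1^\delta)\cap S_{1,1}(g_2^\delta)=\varnothing$, for a common point would force $T_xS^\delta=\ker(dg_1^\delta)_x=\ker(dg_2^\delta)_x$, contradicting $\ker(dg_1^\delta)_x\cap\ker(dg_2^\delta)_x=\{0\}$. Hence the finitely many points of $S_{1,1}(g_1^\delta)\cup S_{1,1}(g_2^\delta)$ are cusp points of $g_1^\delta$ or $g_2^\delta$ and all other bisingular points are bifold points, as required.

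The step I expect to be the main obstacle is the transversality assertion $\Phi_i\pitchfork S_{1,1}$: one has to check that varying $\tau_j$ supplies the missing second-order jet direction \emph{uniformly} over $V_j\cap S^\tau$ and for the displaced base point $g^\tau$ --- not merely at the centre $x_j$ with base $g$ --- that this direction is genuinely complementary to the $S_1$-direction inside the two-dimensional normal bundle of $S_{1,1}$ (which is where Remark \ref{rmk:2.2} enters), and that a single $\delta>0$ can be chosen small enough that these local computations, the immersion and Lagrangian properties, and the transversality of both $dg_i^\tau$ to $S_1$ all persist simultaneously. The fact that Theorem \ref{thm:locim} already yields transversality of $j^2(g_i^*)$ to $S_{1,1}$ on an entire neighbourhood $U$, together with the normalization $\rho_j|_{V_j}\equiv1$, is precisely what lets the local perturbations be patched without interfering.
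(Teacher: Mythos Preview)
Your proposal is correct and follows essentially the same strategy as the paper: cover the bisingular set by charts carrying perturbation data of second type, patch the local Hamiltonians via a partition of unity to obtain a parametrized family $g^{\tau}$, verify that the parametrized $2$-jet map $\Phi_i$ is transverse to $S_{1,1}$ using the local computation of Theorem~\ref{thm:locim} together with Remark~\ref{rmk:2.2}, and then apply Sard's theorem to the projection $\Phi_i^{-1}(S_{1,1})\to(0,\delta)^N$ to select a good parameter before invoking Whitney's Theorems~\ref{thm:g} and~\ref{thm:2ordtran}. The only slip is that your $V_j$'s cover $S$ rather than $F$ (the paper fixes this by adjoining $V_{N+1}=F\setminus S$ and extending $h$ by zero), which is harmless; your explicit observation that $S_{1,1}(g_1^\delta)\cap S_{1,1}(g_2^\delta)=\varnothing$ is a nice clarification the paper leaves implicit.
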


\begin{proof}
{\bf The first step} is to construct a perturbation function. Let $G$ be the local symplectomorphism after applying the Weinstein tubular neighbourhood theorem to $g$ as in the following commutative diagram.
\begin{equation}
    \xymatrix{
    T_\varepsilon^*F \ar[drr]^{G} & &\\
    F \ar[rr]_{g\ \ \ \ \ \ \ \ } \ar[u]& &F_1\times F_2.}
\end{equation}
Denote $S$ as the bisingular set of $g$. Since $g$ has transverse bisingular points only, then $S$ is a smooth submanifold in $F$. For every $x\in S$, apply Theorem \ref{thm:locim} to get a triple $(U_x,h_x,\delta_x)$ of perturbation data of second type in Definition \ref{def:perdat2}. Since the bisingular set $S$ is compact, then it can be covered by finitely many open set $\{U_j\}_{j=1}^N$ from the collection $\{U_x\}_x$. Let $\{V_j\}_{j=1}^N$ be the set of smaller coordinate charts such that 
\[
V_j\subset\bar{V_j}\subset U_j.
\]
Assume further $S\subset\cup_jV_j$. Set $V_{N+1}:= F-S$, then $F=\cup_{j}^{N+1}V_j$. Take a set of partition of unity functions $\{\rho_j\}_{j=1}^{N+1}$ such that
\begin{itemize}
    \item $\sum_j\rho_j=1$,
    \item $\rho_j|_{V_j}=1$,
    \item $\rho_j=0$ outside $U_j$ for all $j$.
\end{itemize}
Notice that $V_j\subset U_j$, $\rho_jh_j|_{V_j}=h_j|_{V_j}$, and the triple $(U_j,h_j,\delta_j)$ is perturbation data of second type of $g|_{U_j}$. Then $(V_j,\rho_jh_j,\delta_j)$ is perturbation data of first type for $g|_{V_j}$ with $j=1,\ldots, N$. Let 
\[
h_{\Vec{t}}=\sum_{j=1}^Nt_j\rho_jh_j,\ \ {\Vec{t}}:= (t_1,\ldots,t_N)\in(0,\delta_j)^N. 
\]
Because $h_{\Vec{t}}$ is defined and compactly supported in $\cup_j^NV_j$, then $h_{\Vec{t}}$ can be extended as 0 in $F-\cup_j^NV_j$. Therefore $h_{\Vec{t}}$ is a globally defined function
\[
h_{\Vec{t}}:F\rightarrow\R.
\]
Let
\begin{equation}\label{equ:G2}
    g^{\Vec{t}}(x_1,x_3)=(g^{\Vec{t}}_1(x_1,x_3),g^{\Vec{t}}_2(x_1,x_3))=G(x_1,\frac{\partial }{\partial x_1}h_{\Vec{t}},x_3,\frac{\partial }{\partial x_3}h_{\Vec{t}}).
\end{equation} 
Define the following map for $i=1,2$:
\begin{align*}
    H_i:(0,\delta)^N\times F&\rightarrow J^2(F,F_i)\\
    (t_1,\ldots,t_N,x_1,x_3)&\mapsto j^2(g_i^{\Vec{t}})(x_1,x_3).
\end{align*}
{\bf The second step} is to show that both $H_1$ and $H_2$ intersect $S_{1,1}$ transversely at $(0,\ldots,0)\times F$. Let $x=(x_1,x_2)\in V_j\subset F$ be a bisingular point. Since $(V_j,\rho_jh_j,\delta)$ is the perturbation data of second type, according to the construction of the perturbation in Theorem \ref{thm:locim}, then $H_i(0,\ldots,t_j,\ldots,0,x_1,x_3)$ is transverse to $S_{1,1}$ when $t_j$ varies near 0. Since this works for all $(x_1,x_3)\in F$, then both $H_1$ and $H_2$ intersect $S_{1,1}$ transversely at $(0,\ldots,0)\times F$.\\
{\bf The third step} is to show that there is a vector $\Vec{t^0}:=(t_1^0,\ldots,t^0_N)$ such that 
\begin{equation}\label{equ:van1}
    H_i(\Vec{t^0},\cdot,\cdot):F\rightarrow J^2(F,F_i)
\end{equation}
is transverse to $S_{1,1}$. Since $H_1$ and $H_2$ is transverse to $S_{1,1}$, then $H_1^{-1}(S_{1,1})$ and $H_2^{-1}(S_{1,1})$ are both smooth manifolds. Consider the surjective projections 
\[
\pi_i:H_i^{-1}(S_{1,1})\rightarrow (0,\delta)^N,\ i=1,2.
\]
By Sard's theorem, the regular value for both maps are dense and open. Take $\Vec{t^0}=(t_1^0,\ldots,t^0_N)$ in the regular value set, then $\pi_i^{-1}(\Vec{t^0})\subset H_i^{-1}(0)$ is a smooth one dimensional submanifold for $i=1,2$. Suppose $(x_1,x_2)\in \pi_i^{-1}(\Vec{t^0})$. Let $v_1,\ldots,v_N\in T_{(\Vec{t^0},x_1,x_2)}H^{-1}_i(0)$ be vectors such that
\begin{equation}\label{equ:5.5}
    (dH_i)_{(\Vec{t^0},x_1,x_2)}(v_1),\ldots,(dH_i)_{(\Vec{t^0},x_1,x_2)}(v_N)\in T_{H_i(\Vec{t^0},x_1,x_2)}S_{1,1}.
\end{equation}
Since $\Vec{t^0}$ is a regular value of $\pi_i$ for $i=1,2$, assume  that
\begin{equation}\label{equ:5.6}
    (d\pi_i)_*v_1=\frac\partial{\partial t_1},\ldots,(d\pi_i)_*v_N=\frac\partial{\partial t_N},
\end{equation}
for $i=1,2$. Suppose the maps in Equation (\ref{equ:van1}) does not intersect $S_{1,1}$ transversely, then
\begin{equation}\label{equ:5.7}
    (dH_i)_{(\Vec{t^0},x_1,x_2)}(\frac\partial{\partial x_1})\in T_{H_i(\Vec{t^0},x_1,x_2)}S_{1,1}\ \text{and}\ (dH_i)_{(\Vec{t^0},x_1,x_2)}(\frac\partial{\partial x_2})\in T_{H_i(\Vec{t^0},x_1,x_2)}S_{1,1},\ i=1,2.
\end{equation}
Equations (\ref{equ:5.5}),  (\ref{equ:5.6}), (\ref{equ:5.7}) imply that
\begin{align*}
    &T_{(\Vec{t^0},x_1,x_2)}H^{-1}_i(S_{1,1})\\
    \supset&\Span\langle v_1,\ldots,v_N,\frac\partial{\partial x_1},\frac\partial{\partial x_2}\rangle\\
    =&\Span\langle\frac\partial{\partial t_1},\ldots,\frac\partial{\partial t_N},\frac\partial{\partial x_1},\frac\partial{\partial x_2}\rangle\\
    =&T_{(\Vec{t^0},x_1,x_2)}((0,\delta)^N\times F),
\end{align*}
which is impossible. Therefore the maps in Equation (\ref{equ:van1}) must intersect $S_{1,1}$ transversely. Then the remark after Theorem \ref{thm:2ordtran} implies $S_{1,1}(g_i)$ are finitely many cusp points for $i=1,2$, since cusp points are discrete. According to the definition of $S_{1,1}(g_i)$, $S_{1}(g_i)-S_{1,1}(g_i)$ is an immerison under $g_i$ with $i=1,2$. So Theorem \ref{thm:g} implies that elements in $S_{1}(g_i)-S_{1,1}(g_i)$ are fold points, $i=1,2$.
\end{proof}

%\begin{them}
%Let $(F_1,\omega_1)$ and $(F_2,\omega_2)$ be two closed symplectic surfaces. Suppose $g=(g_1,g_2):F\rightarrow F_1\times F_2$ is a smooth Lagrangian immersion. Then there is a positive number $\delta$ and an isotopy of Lagrangian immersions 
%\[
%g^t:F\rightarrow F_1\times F_2,\  t\in[0,\delta]
%\]
%such that the followings hold.
% \begin{itemize}
%    \item The homotopy starts at $g$, i.e. $g^t|_{t=0}=g$.
%    \item The perturbed function $g^\delta$ is a smooth Lagrangian immersion with only bifold points.
%\end{itemize}
%\end{them}

%\begin{proof}
%The conclusion follows from Theorem \ref{thm2}, Theorem \ref{thm:locim}, and Theorem \ref{thm:g}.
%\end{proof}

\begin{rmk}
Let $g=(g_1,g_2):F\rightarrow F_1\times F_2$ be a Lagrangian immersion. There are no local coordinates such that both $g_1$ and $g_2$ are locally written as fold maps, even $g$ satisfying the conclusion of the main Theorem \ref{thm:main}. If this is not true, then at a bisingular point, we have either $g(x_1,x_2)=(x_1,x_2^2,x_2,x_1^2)$ or $g(x_1,x_2)=(x_1,x_2^2,x_2^2,x_1)$. Neither of them is possible since the former contradicts with Lemma \ref{lem:det} and the latter has a non-zero kernel.
\end{rmk}

\section{Transversality of bisingular sets}\label{sec:trabisin}
Assume $F$, $F_1$, $F_2$ are closed surfaces. Equip $F_1$, $F_2$, $F_1\times F_2$ with symplectic forms $\omega_1$, $\omega_2$, $\omega_1\times(-\omega_2)$ respectively. Let $(F,g;F_1\times F_2)$ be a Lagrangian immersion, where $F$, $F_1$, and $F_2$ are closed surfaces. It is shown in the previous sections that $g=(g_1,g_2)$ can be perturbed into a new Lagrangian immersion whose bisingular set is an immersed submanifold under $g_1$ and $g_2$ except at finitely many cusp points. In this section, a further perturbation is performed such that the image of the bisingular set only has transversal intersections after removing the cusp points. The proof is the same local-to-global argument as before.

\begin{them}\label{thm:transloc}
Let $g=(g_1,g_2):\R^2\rightarrow\R^2\times\R^2$ be a Lagrangian immersion. Suppose the bisingular set $S$ is an immersion under $g_1$ and $g_2$. Assume $(0,0)\in S$ and $g(0,0)=(0,0,0,0)$. Then there is a number $\delta>0$ and a regular homotopy of Lagrangian immersions 
\[
g^t=(g_1^t,g_2^t):\R^2\rightarrow\R^2\times\R^2,\ t\in[0,\delta]
\]
such that the followings hold.
\begin{itemize}
    \item The homotopy starts at $g$, i.e. $g^t|_{t=0}=g$.
    \item The bisingular set $S_t$ of $g^t$ is a submanifold of $\R^2$ for all $t$.
    \item The homotopy $g^t$ fixes the origin, i.e. $g^t(0,0)=(0,0,0,0)$ for all $t$.
    \item There is a positive numbers $\delta$ and an open neighbourhood $U$ of $(0,0)$ such that 
    \[
    S_t\cap U=S\cap U
    \]
    and the following map is transverse to the $x$-axis of $\R^2$:
    \[
    g_i^*: (-\delta,\delta)\times\{S\cap U-(0,0)\}\rightarrow \R^2,\ i=1,2.
    \]
\end{itemize}
\end{them}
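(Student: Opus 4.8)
The plan is to realize the perturbation by post-composing $g$ with Hamiltonian flows on the two factors of $\R^2\times\R^2$. This is the cleanest way to keep the bisingular set fixed, which is what makes the conclusion meaningful at all, since the domain $S\cap U$ of $g_i^*$ must not depend on $t$. One can recast everything inside the Weinstein framework of Theorems \ref{thm1} and \ref{thm:locim} for uniformity with the local-to-global scheme, but the factor-flow description is a transparent special case, and I would use it.

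First I would fix coordinates as in the earlier proofs. Write $\R^2\times\R^2=\R^2_+\times\R^2_-$. Since $(0,0)$ is a transverse bisingular point, Corollary \ref{cor:det} gives $\rank(dg_1)_{(0,0)}=\rank(dg_2)_{(0,0)}=1$, and since $g_i|_S$ is an immersion near $(0,0)$ the image $g_i(S)$ is a smoothly embedded arc through the origin of the $i$-th factor. I choose coordinates on $\R^2_+$ and on $\R^2_-$ so that near the origin $g_1(S)$ and $g_2(S)$ are the $x$-axes of $\R^2_+$ and $\R^2_-$ respectively --- these are the ``$x$-axes of $\R^2$'' in the statement --- and fix a parametrization $\gamma(s)$ of $S$ near $(0,0)$ with $\gamma(0)=(0,0)$.

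Next, the perturbation itself: pick compactly supported Hamiltonians $\psi_1$ on $\R^2_+$ and $\psi_2$ on $\R^2_-$ whose flows $\phi^1_t,\phi^2_t$ (with respect to $\omega_{std}$ on each factor) are, near the origin of each factor, the shear $(u,v)\mapsto(u,v-tu)$, and set $g^t:=(\phi^1_t\times\phi^2_t)\circ g=(\phi^1_t\circ g_1,\phi^2_t\circ g_2)$. Because $\phi^1_t\times\phi^2_t$ is a symplectomorphism of $(\R^2_+\times\R^2_-,\omega_{std}\times(-\omega_{std}))$, each $g^t$ is a Lagrangian immersion, $t\mapsto g^t$ is a regular homotopy with $g^0=g$, the origin is fixed (the Hamiltonians are critical there), and since folds and cusps are stable under post-composition with diffeomorphisms of the target, the conclusions of the earlier sections for $g^\delta$ are untouched. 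The crucial point is that a symplectomorphism of a surface is area-preserving, so $\det(d\phi^i_t)\equiv 1$ and hence $\det(dg^t_i)=\det(dg_i)$ for $i=1,2$; by Lemma \ref{lem:det} applied to $g^t$ the bisingular set of $g^t$ equals $\{\det(dg^t_1)=0\}=\{\det(dg_1)=0\}=S$, independently of $t$. Thus $S_t$ is a submanifold for every $t$, and $S_t\cap U=S\cap U$ for any neighbourhood $U$.

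Finally I would check transversality of $g_i^*\colon(-\delta,\delta)\times(S\cap U\setminus\{(0,0)\})\to\R^2$ to the $x$-axis. Using $g_i\circ\gamma(s)=(s,0)$ and the shear form of $\phi^i_t$ near the origin gives $g_i^*(t,\gamma(s))=(s,-ts)$ in the chosen coordinates; this point lies on the $x$-axis only if $ts=0$, and excluding $s=0$ forces $t=0$, where $\partial_t g_i^*=(0,-s)$ has nonzero normal component, so $\im(dg_i^*)+T(x\text{-axis})=\R^2$ there. Hence $g_i^*$ is transverse to the $x$-axis along $\{0\}\times(S\cap U\setminus\{(0,0)\})$, and since $\overline{S\cap U}$ is compact for small $U$, shrinking $U$ and choosing $\delta$ small makes the whole family transverse. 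The one point needing care --- and the reason I would not use a generic Weinstein perturbation $G(x,t\,\partial h)$ here --- is that such a perturbation moves $S$, so to run the argument in that framework one must choose $h$ whose first-order effect on $\det(dg_i)$ vanishes along $S$, which is delicate; the factor-flow construction sidesteps this because area-preservation makes $\det(dg_i)$ literally constant in $t$. What then remains for the global statement is the by-now routine partition-of-unity plus Sard/parametric-transversality argument, exactly as in the proofs of Theorems \ref{thm2} and \ref{main5}.
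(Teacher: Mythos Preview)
Your proposal is correct and follows essentially the same approach as the paper: both perturb by post-composing $g$ with a compactly supported Hamiltonian flow on the target (the paper uses a rotation $h=\tfrac12\rho(y_1^2+y_2^2)$ on $\R^2_+$ and treats the two factors one at a time, you use shears $h_i=\tfrac12 u^2$ on both factors simultaneously), and both exploit that such a flow is an area-preserving diffeomorphism on each factor so that $\det(dg_i^t)=\det(dg_i)$ and hence $S_t=S$. The paper routes the construction through the Weinstein extension $G$ (pulling $\phi_t$ back to $\varphi_t$ on $T^*_\varepsilon\R^2$), but since $G\circ\varphi_t=\phi_t\circ G$ near the origin this is exactly your factor-flow description; your direct formulation is cleaner and your remark that a generic graph-of-$dh$ perturbation would move $S$ is the correct diagnosis of why this theorem needs a different mechanism from Theorems~\ref{thm1} and~\ref{thm:locim}.
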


\begin{proof}
Only need to construct the homotopy $g_1^t$, then $g_2^t$ is constructed similarly.\\
To distinguish the first and the second component of $\R^2\times\R^2$, the notation $\R^2_+\times\R^2_-$ is used instead of $\R^2\times\R^2$. {The second remark after the Weinstein tubular neighbourhood theorem implies that the immersion $g|_D$ can be extended to a local symplectic diffeomorphism $G$ as in the following commutative diagram, where $D\subset\R^2$ is a small open disc. 
\begin{equation}\label{diag:3}
    \xymatrix{
    T_\varepsilon^*\mathbb R^2|_D \ar[dr]^{\ \ G=(G_1,G_2,G_3,G_4)} & \\
    D \ar[r]^{g|_D\ \ \ \ } \ar[u] & \mathbb R^2_+\times\mathbb R^2_-.}
\end{equation}}
Using Darboux's theorem, assume the symplectic form of $\R^2_+\times\R^2_-$ around the origin is $\omega_{std}\times(-\omega_{std})$.\\
{\bf The first step} is to construct a Hamiltonian flow on $\R^2_+\times\R^2_-$ {supported around the origin}. Denote $||\cdot||$ as the standard norm of vectors in $\R^4$. Let 
\[
\rho:\R\rightarrow\R
\]
be a {smooth} bump function such that
\begin{itemize}
    \item $\rho=1$ in $(-1,1)$,
    \item $\rho=0$ outside $(-2,2)$,
    %\item $\rho$ is strictly increasing on $[-2,-1]$ and strictly decreasing on $[1,2]$.
\end{itemize} 
Define
\begin{align}\label{equ:6.8}
    h:\R^2_+\times\R^2_-&\rightarrow\ \ \ \ \ \ \ \ \ \R\\
    y=(y_1,y_2,y_3,y_4)&\mapsto\frac{1}{2}\rho(\frac{||y||^2}{\epsilon})(y_1^2+y_2^2),
\end{align}
where $\epsilon>0$ is a small number (determined later). {If $y$ is near the origin, then $\rho=1$, and according to Example \ref{exp:Hamflow}, the Hamiltonian vector field of $h$ is
%\begin{align*}
%    X=(y_2\rho(\frac{||y||^2}{\epsilon})+\frac{1}{\epsilon}\rho'(\frac{||y||^2}{\epsilon})(y_1^2+y_2^2)y_2)\frac{\partial}{\partial y_1}-(y_1\rho(\frac{||y||^2}{\epsilon})+\frac{1}{\epsilon}\rho'(\frac{||y||^2}{\epsilon})(y_1^2+y_2^2)y_1)\frac{\partial}{\partial y_2}\\
%    -\frac{1}{\epsilon}\rho'(\frac{||y||^2}{\epsilon})(y_1^2+y_2^2)y_4)\frac{\partial}{\partial y_3}+\frac{1}{\epsilon}\rho'(\frac{||y||^2}{\epsilon})(y_1^2+y_2^2)y_4)\frac{\partial}{\partial y_4}.
%\end{align*}
\[
X=y_2\frac{\partial}{\partial y_1}-y_1\frac{\partial}{\partial y_2},\quad ||y||^2<\epsilon.
\]}
Thus for $||y||^2<\epsilon$, the Hamiltonian flow $\phi_t$ of $h$ is the solution of the following equations:
\begin{equation*}
    \left\{
    \begin{aligned}
    \dot{y_1}=&\ \ y_2\\
    \dot{y_2}=&-y_1
    \end{aligned}
    \right..
\end{equation*}
As a result, the flow is 
\begin{equation}\label{equ:6.1}
    \phi_t(y)=(\phi_t^1(y),\phi_t^2(y))=(y_1\cos(t)+y_2\sin(t),-y_1\sin(t)+y_2\cos(t),y_3,y_4),\quad ||y||^2<\epsilon.
\end{equation}
If $||y||^2>2\epsilon$, then $\rho'=0$. This shows that $X$ is a trivial vector field, therefore
\[
\phi_t(y)=y,\quad ||y||^2>2\epsilon.
\]
{\bf The second step} is to construct a flow on {$T_\varepsilon^*\R^2|_D$ supported in a neighbourhood of $(0,0,0,0)$ using $\phi_t$ and $G$}. Because $G$ is a local symplectomorphism, $\epsilon$ can be chosen small enough such that $G|_{G^{-1}(||y||^2<3\epsilon)}$ is a symplectomorphism. Then 
\[
G^{-1}|_{(||y||^2<3\epsilon)}(\phi_t)
\]
is a flow in $G^{-1}(||y||^2<3\epsilon)$ and trivial near the boundary. This flow can be trivially extended outside of $G^{-1}(||y||^2<3\epsilon)$, so we get a globally defined flow $\varphi_t$ on {$T_\varepsilon^*\R^2|_D$}. Since $\phi_t$ is a symplectomorphism of $\R^2_+\times\R^2_-$ for every $t$, then $\varphi_t$ is a symplectomorphism of {$T_\varepsilon^*\R^2|_D$} for all $t$.\\
%Define $h:T_\varepsilon^*\R\rightarrow\R$ as
%\begin{equation*}
%    h(x)=\left\{
%    \begin{aligned}
%    \tilde h\circ G|_{G^{-1}(||y||^2<3\epsilon)}&,\ \ x\in G^{-1}(||y||^2<3\epsilon)\\
%    0\ \ \ \ \ \ \ \ &,\ \ \ \ \ \ \ \ \ otherwise\ \ \ \ \ \ \ \ \
%    \end{aligned}
%    \right..
%\end{equation*}
%Then $\varphi_t$ is the Hamiltonian flow of $h$.\\
{\bf The final step} is to construct the homotopy $g_1^t$ and to prove that $g_1^t$ has the properties listed in the theorem. Define {$g_1^t:\R^2\rightarrow\R^2_+$ as the trivial extension of the following composition:
\begin{equation}\label{equ:6.9}
    D\stackrel{0-section}\longrightarrow T_\varepsilon^*\R^2|_D\stackrel{G\circ\varphi_t}\longrightarrow\R^2_+\times\R^2_-\stackrel{proj}\longrightarrow\R^2_+.
\end{equation}}
Since $\phi_t$ is a rotation near $(0,0,0,0)$ according to Equation $($\ref{equ:6.1}$)$, then
\[
\phi_t(\{y|||y||^2<\epsilon\})\subset\{y|||y||^2<\epsilon\}.
\]
Let
\[
U=G^{-1}(\phi_t(||y||^2<\epsilon))=G^{-1}(||y||^2<\epsilon),
\]
then
\[
g_1^t(x)=\phi_t\circ g_1(x),\quad x\in U.
\]
Clearly $g^t_1$ is a {homotopy} starting at $g_1$. The second property of the theorem is true if $t$ is small enough. Equation (\ref{equ:6.1}) shows that $g_1^t|_U$ is the composition of $g_1$ with a rotation centered at the origin.
Because rotations are diffeomorphisms, the singular set $S_t\cap U$ of $g_1^t|_U$ is $S\cap U$. This proves the third and the first half of the forth properties.\\
For the second half of the forth property, if there are no points $(0,0)\ne x\in U$ such that $g_1(x)$ is on the $x$-axis, then the forth property holds automatically. Otherwise, let
\[
g_1(x)=(y_1,0),\quad (0,0)\ne x\in U.
\]
Because $g_1|_S$ is an immersion at $(0,0)$, $U$ can be shrunk if necessary such that $y_1\ne0$. Equation (\ref{equ:6.1}) implies
\[
\frac{d}{dt}|_{t=0}g^t_1(x)=\frac{d}{dt}|_{t=0}(y_1\cos(t),-y_1\sin(t))=(0,-y_1).
\]
Therefore there is a positive numbers $\delta$ such that the following map
\[
    g_1^*: (-\delta,\delta)\times\{S\cap U-(0,0)\}\rightarrow \R^2_+.
\]
is transverse to the $x$-axis at all $(0,x)\in(-\delta,\delta)\times\{S\cap U-(0,0)\}$. The proof is complete.
\end{proof}

In the above proof, {first the Weinstein tubular neighbourhood theorem is applied to get an extension $G:T_\varepsilon^*\mathbb R^2|_D\rightarrow\R^2_+\times\R^2_-$ of $g$ as in Diagram (\ref{diag:3}). Then we find a Hamiltonian flow $\varphi_t$ on $T_\varepsilon^*\mathbb R^2$ to get a homotopy starting at $g=(g_1,g_2)$ as in Equation (\ref{equ:6.9})} (This equation only defines a homotopy of $g_1$, but the same argument produces a homopty for $g_2$. Combine these we can construct the homotopy of $g$) such that the conclusions of Theorem \ref{thm:transloc} are satisfied. Let $U$ be a neighbourhood in {$D\subset\R^2$} and $\delta$ be a positive real number as in Theorem \ref{thm:transloc}.

\begin{defn}\label{def:perdat3}
Call the triple $(U,\varphi_t,\delta)$ given above {\bf perturbation data of third type}.
\end{defn}

\begin{rmk}\label{rmk:6.1}
If the perturbation function $h$ in Equation (\ref{equ:6.8}) is replaced by
\begin{align*}
     h:\R^2_+\times\R^2_-&\rightarrow\ \ \ \ \ \ \ \ \ \R\\
    y=(y_1,y_2,y_3,y_4)&\mapsto\frac{1}{2}\rho(\frac{||y||^2}{\epsilon})(y_1+y_2),
\end{align*}
then the corresponding Hamiltonian flow near the origin is
\[
\phi_t(y)=(\phi_t^1(y),\phi_t^2(y))=(y_1+t,y_2-t,y_3,y_4).
\]
The same argument in the proof of Theorem \ref{thm:transloc} shows that there is a number $\delta>0$ and a regular homotopy of Lagrangian immersions starting at $g$ such that $g^\delta$ misses the origin.
\end{rmk}

\begin{them}\label{main6}
Let $(F_1,\omega_1)$ and $(F_2,\omega_2)$ be two closed symplectic surfaces. Given a smooth Lagrangian immersion 
\[
g=(g_1,g_2):F\rightarrow F_1\times F_2,
\]
assume the bisingular set $S$ consists of bifold points and finitely many cusp points. Then there is a positive number $\delta$ and a regular homotopy class of Lagrangian immersions 
\[
g^t=(g_1^t,g_2^t):F\rightarrow F_1\times F_2\ \text{for}\ t\in[0,\delta]
\]
such that the followings hold.
\begin{itemize}
    \item The homotopy starts at $g$, i.e. $g^t|_{t=0}=g$.
    \item Each function $g_1^t|_S$ and $g_2^t|_S$ in the homotopy has only finitely many transversal self intersections for all $t\in(0,\delta]$.
\end{itemize}
\end{them}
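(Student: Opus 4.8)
The plan is to run the local-to-global scheme of the proofs of Theorems \ref{thm2} and \ref{main5}, now using Theorem \ref{thm:transloc} (perturbation data of third type, Definition \ref{def:perdat3}) as the local model and carrying the argument out on the space of \emph{pairs} of points of the bisingular set. First I would fix a Weinstein extension $G:T^*_\varepsilon F\to F_1\times F_2$ of $g$ as in Diagram (\ref{graph:Lag}) and record some consequences of the hypothesis and Corollary \ref{cor:det}: the bisingular set $S$ is a compact $1$-manifold, its cusp points $p_1,\dots,p_m$ are finitely many and isolated, and at a cusp point of $g_1$ one has $T_{p_k}S=\ker(dg_1)_{p_k}$, so $g_2|_S$ is an immersion there (and symmetrically for cusps of $g_2$). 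Hence away from the $p_k$ both $g_1|_S$ and $g_2|_S$ are local embeddings, while near a cusp point the singular branch is, in Whitney's normal form, the curve $s\mapsto(-3s^2,-2s^3)$, which is injective; thus neither $g_1|_S$ nor $g_2|_S$ has a self-intersection among points lying in a small enough neighbourhood of a cusp point.

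Next I would dispose of the cusp points by a preliminary perturbation: using finitely many Hamiltonian perturbations of the translation type of Remark \ref{rmk:6.1}, supported in small balls of $T^*_\varepsilon F$ around $G^{-1}(g(p_k))$, move the images $g_1(p_k)$ and $g_2(p_k)$ until they are pairwise distinct and, for each $k$, no sheet of $g_1(S)$ (resp.\ $g_2(S)$) other than the cusp sheet through $p_k$ meets $g_1(p_k)$ (resp.\ $g_2(p_k)$). This is possible by a dimension count — a point is generically disjoint from a curve in a surface — and, the perturbations being $C^\infty$-small, the fold/cusp structure supplied by Theorem \ref{thm:main} is preserved because folds and cusps are stable. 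After this step there is a compact $1$-manifold with boundary $S'\subset S$, obtained by removing small neighbourhoods $B_k\ni p_k$, on which both $g_1$ and $g_2$ are immersions, and it suffices to make $g_1|_{S'}$ and $g_2|_{S'}$ immersions with finitely many transverse double points and no triple points; all further perturbations will be supported away from $\bigcup_k B_k$.

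For each $x\in S'$, Theorem \ref{thm:transloc} provides perturbation data of third type $(U_x,\varphi^x_t,\delta_x)$; I would choose a finite subcover $\{U_j\}_{j=1}^N$ of $S'$, shrink to $\{V_j\}$ with $\bar V_j\subset U_j$, take a subordinate partition of unity, and — exactly as in Theorems \ref{thm2} and \ref{main5} — build a family of Hamiltonian isotopies of $T^*_\varepsilon F$ parametrized by $\vec t=(t_1,\dots,t_N)\in(0,\delta)^N$ and the induced family of Lagrangian immersions $g^{\vec t}=(g_1^{\vec t},g_2^{\vec t})$. For $i=1,2$ consider the evaluation map on pairs
\begin{align*}
\Phi_i:(0,\delta)^N\times\bigl((S'\times S')\setminus\Delta_{S'}\bigr)&\longrightarrow F_i\times F_i\\
(\vec t,x,x')&\longmapsto\bigl(g_i^{\vec t}(x),\,g_i^{\vec t}(x')\bigr).
\end{align*}
Because $g_i|_{S'}$ is an immersion, for $\delta$ small there are no coincidences $g_i^{\vec t}(x)=g_i^{\vec t}(x')$ with $x,x'$ close, so $\Phi_i^{-1}(\Delta_{F_i})$ lies over a compact part of $(S'\times S')\setminus\Delta_{S'}$; the content is that $\Phi_i$ is transverse to the diagonal $\Delta_{F_i}$. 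Granting this, $\Phi_i^{-1}(\Delta_{F_i})$ is a manifold, the projection $\Phi_i^{-1}(\Delta_{F_i})\to(0,\delta)^N$ has a dense open set of regular values by Sard's theorem, and a common regular value $\vec t^{\,0}$ makes $g_i^{\vec t^{\,0}}|_{S'}$ an immersion with $0$-dimensional — hence finite, $S'$ being compact — transverse double-point set for both $i$. Combining with the cusp-neighbourhood perturbation of the second step gives the required homotopy; taking $\delta$ small and exploiting the first-order nonvanishing in Theorem \ref{thm:transloc} — which rotates the singular branch at every chart centre uniformly for $t\in(0,\delta]$ — ensures the transversality holds for every $t\in(0,\delta]$ while keeping the bisingular set made of bifold points and the same finitely many cusp points.

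The step I expect to be the main obstacle is the verification that $\Phi_i$ is transverse to $\Delta_{F_i}$: at a coincidence point the $t_j$-derivatives of $\Phi_i$, one for each chart $U_j$ meeting $x$ or $x'$, reduce modulo $T\Delta_{F_i}$ to the nonvanishing infinitesimal rotations $\tfrac{d}{dt}\big|_{0}\phi_t=(y_2,-y_1)$ found in Theorem \ref{thm:transloc}, and these, together with $dg_i(T_xS')$ and $dg_i(T_{x'}S')$, must span $T_{g_i(x)}F_i$; arranging that the finite family of perturbation data is rich enough for this spanning to hold at every potential self-intersection — enlarging the cover and, if necessary, adjoining translation-type perturbations as in Remark \ref{rmk:6.1} to supply directions transverse to the branches — is where the real work lies, the non-compactness near $\Delta_{S'}$ and the cusp points having been dealt with beforehand.
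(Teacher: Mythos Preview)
Your proposal is correct and follows the same local-to-global scheme as the paper (third-type perturbation data plus a Sard argument), but you package the transversality step differently, and the difference is worth noting. The paper works \emph{branch-by-branch in the target}: it picks an open set $V\subset F_1$, separates the two sheets of $g_1^{-1}(V)\cap S$, takes coordinates in which the second sheet is the $x$-axis, and then uses the rotation perturbations of Theorem \ref{thm:transloc} only on the first sheet to make its image transverse to that fixed $x$-axis; afterwards it inducts over a finite cover $\{W_j\}$ of $g_1(S)$. You instead run a single \emph{multijet} argument on the pair space $(S'\times S')\setminus\Delta_{S'}$, asking that $\Phi_i$ be transverse to $\Delta_{F_i}$, and then invoke Sard once. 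Your route is the more standard transversality formalism and has the advantage of handling all self-intersections simultaneously, so no induction over $\{W_j\}$ is needed; the paper's route has the advantage that, because the second sheet is held fixed while only the first is rotated, the verification of transversality reduces exactly to the last displayed computation in Theorem \ref{thm:transloc} (the vector $(0,-y_1)$ is automatically transverse to the $x$-axis), whereas in your setup both branches move and you must argue, as you do in your last paragraph, that the collection of infinitesimal rotations (possibly supplemented by translations from Remark \ref{rmk:6.1}) is rich enough to span at every coincidence.

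Two small points. First, when you say you assemble the perturbations ``exactly as in Theorems \ref{thm2} and \ref{main5}'', note that those theorems sum Hamiltonian \emph{functions} through a partition of unity, while the third-type data are \emph{flows}; the paper accordingly composes the flows, $g_1^{\vec t}=G_1\circ\varphi_{t_N}\circ\cdots\circ\varphi_{t_1}$, rather than summing generators. Second, your final clause about the conclusion holding for \emph{every} $t\in(0,\delta]$ is stronger than what either argument actually yields: Sard gives a dense set of good parameters, and the paper (like you) really produces a single good $\vec t^{\,0}$; the homotopy is then the path to that endpoint.
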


\begin{proof}
Only need to construct the homotopy $g_1^t$, then $g_2^t$ is constructed similarly. Recall that the bisingular set $S$ contains only finitely many cusp points, then we can perform a perturbation such that these cusp points are not in the self-intersections under $g_1|_S$ and $g_2|_S$ (as in Remark \ref{rmk:6.1}). Therefore it can be assumed further the bisingular set has no cusp points.\\
{\bf The first step} is to fix the perturbation data. Let $G$ be the local symplectomorphism after applying the Weinstein tubular neighbourhood theorem to $g$ as in the following commutative diagram.
\begin{equation*}
    \xymatrix{
    T_\varepsilon^*F \ar[drr]^{G=(G_1,G_2)} & &\\
    F \ar[rr]_{g\ \ \ \ \ \ \ \ } \ar[u]& &F_1\times F_2.}
\end{equation*} Take an open set $V\subset F_1$ such that $\bar{V}$ is compact. Suppose $g_1^{-1}(V)$ has two disjoint connected components $V_1$ and $V_2$ such that 
\[
g_1(V_1\cap S)\cap g_1(V_2\cap S)\ne\emptyset.
\]
The case where $g_1^{-1}(V)$ has more than two connected components is dealt with similarly. If $x\in V_1\cap S$ and $x'\in V_2\cap S$ are two points such that 
\[
g_1(x)=g_1(x')\in V.
\]
Without loss of generality, one can assume $V_1$, $V_2$, $V$ are local charts centered at $x$, $x'$, $g_1(x)$ respectively. Since $g_1$ is an immersion at $x'$, the local chart $V$ can be assumed further such that $g_1(S\cap V_2')$ is the $x$-axis of $V'$. Under these local charts, Theorem \ref{thm:transloc} can be applied to get a triple $(U_x,\varphi_t^x,\delta_x)$ of perturbation data of third type (the symplectomorphism $G$ is needed) with 
\[
U_x\subset \Bar{U}_x\subset V_1.
\]
If $x\in V_1\cap S$ such that $g_1(x)$ has a unique preimage, then take the triple $(U_x,\varphi_t^x,\delta_x)$ corresponding to the trivial perturbation.\\
{\bf The second step} is to construct a perturbation such that the image of the singular set in $V$ has only transversal self-intersections. Since 
\[
\bar{V}_1\subset\cup_x(U_x-x),
\]
a finite subcover $\{U_j\}_{j=1}^N$ can be found such that
\[
\bar{V}_1\subset\cup_j(U_j-x_j),
\] 
where $x_j$ is the center of the local chart $U_j$. Denote the triple of perturbation data corresponding to $U_j$ as $(U_j,\varphi_{t_j},\delta_j)$, for $j=1,\ldots,N$.
Define
\begin{equation*}
    \begin{aligned}
    g_1^{\Vec{t}}:F&\rightarrow \ \ \ \ \ \ \ \ \ \ \ F_1\\
    x&\mapsto G_1\circ\varphi_{t_N}\circ\ldots\circ\varphi_{t_1}(x),
    \end{aligned}
\end{equation*} 
where $\Vec{t}=(t_1,\ldots,t_N)$. Denote by $S_{\Vec{t}}$ the singular set of $g^{\Vec{t}}_1$. Let
\[
\gamma^{\Vec t}(s):[0,1]\rightarrow F
\]
be the parametrization of $S_{\Vec t}$.\\
The following shows that the image of $S_{\Vec t}$ in $V$ has only transversal self-intersections. Define
\begin{align*}
    H:\Pi_j^N(-\delta_j,\delta_j)\times[0,1]&\rightarrow\ \ \ \ \  \bar V\\
    (\Vec t,s)\ \ \ \ \ \ \ \ \ &\mapsto g^{\Vec t}_1(\gamma^{\Vec t}(s)).
\end{align*}
Since $\{U_j-x_j\}_{j=1}^N$ is an open cover, there is a number $j$ such that $\gamma^{\Vec 0}(s)\in (U_j-x_j)$. Therefore $H$ is transverse to $g_1(S\cap V_2)$ at $(0,\ldots,0,s)$ ($t_j$ direction gives the transversality). Since this holds for all $s$, then there is a number $\delta>0$ such that 
\begin{equation}\label{equ:van2}
    H:(-\delta,\delta)^N\times[0,1]\rightarrow\bar V
\end{equation}
intersects $g_1(S\cap V_2)$ transversely. Consider the projection
\[
\pi:H^{-1}(g_1(S\cap V_2))\rightarrow (-\delta,\delta)^N.
\]
By Sard's theorem, the regular value for $\pi$ is dense and open. Take $\Vec t^0=(t_1^0,\ldots,t^0_N)$ in the regular value set, then $\pi^{-1}(\Vec t^0)\subset H^{-1}(0)$ is a smooth one dimensional submanifold. The claim is that this one dimesional submanifold is transverse to $g_1(S\cap V_2)$. Suppose $s\in \pi^{-1}(\Vec{t^0})$. Let $v_1,\ldots,v_N\in T_{(\Vec{t^0},x_1,x_2)}H^{-1}_i(0)$ be vectors such that
\begin{equation}\label{equ:6.5}
    (dH)_{(\Vec{t^0},s)}(v_1),\ldots,(dH)_{(\Vec{t^0},s)}(v_N)\in \Span\langle\frac{\partial}{\partial x}\rangle.
\end{equation}
Since $\Vec{t^0}$ is a regular value of $\pi$ for $i=1,2$, assume  that
\begin{equation}\label{equ:6.6}
    (d\pi)_*v_1=\frac\partial{\partial t_1},\ldots,(d\pi)_*v_N=\frac\partial{\partial t_N},
\end{equation}
for $i=1,2$. Suppose the maps in Equation (\ref{equ:van2}) does not intersect the $x$-axis transversely, then
\begin{equation}\label{equ:6.7}
    (dH)_{(\Vec{t^0},s)}(\frac\partial{\partial s})\in \Span\langle\frac{\partial}{\partial x}\rangle.
\end{equation}
Equations (\ref{equ:6.5}),  (\ref{equ:6.6}), (\ref{equ:6.7}) imply that
\begin{align*}
    &T_{(\Vec{t^0},s)}H^{-1}(\Span\langle\frac{\partial}{\partial x}\rangle)\\
    \supset&\Span\langle v_1,\ldots,v_N,\frac\partial{\partial s}\rangle\\
    =&\Span\langle\frac\partial{\partial t_1},\ldots,\frac\partial{\partial t_N},\frac\partial{\partial s}\rangle\\
    =&T_{(\Vec{t^0},s)}((0,\delta)^N\times [0,1]),
\end{align*}
which is impossible.\\
{\bf The final step} is to use the induction argument to finish the proof. The image of the singular set can be covered by open sets $\{W_j\}_{j=1}^M$, where $g_1^{-1}(W_j)$ has disjoint connected components. Then apply the second step to $W_1$ such that the image of the singular set of the perturbed function only has self-intersections in $W_1$. Next apply the second step to $W_2$ such that the image of the singular set of the perturbed function has self-intersections in $W_2\cup W_1$ (this can be achieved if the perturbation is small enough). As the process goes on, we get a perturbation such that the image of the singular set of the perturbed function only has transversal self-intersections in $\cup_j W_j$.
\end{proof}

\section{The proof of the main theorem}
The goal of this section is to prove the main theorem (Theorem \ref{thm:main}). In fact, we are going to apply the conclusions of the previous sections to get a stronger version of Theorem \ref{thm:main}.

\begin{them}
Assume $F$, $F_1$, $F_2$ are closed surfaces. Equip $F_1$, $F_2$, $F_1\times F_2$ with symplectic forms $\omega_1$, $\omega_2$, $\omega_1\times(-\omega_2)$ respectively. Let $g=(g_1,g_2):F\rightarrow F_1\times F_2$ be a Lagrangian immersion. Then there is a number $\delta>0$ and a regular homotopy of Lagrangian immersions starting at $g$:
\[
g^t=(g_1^t,g_2^t):F\rightarrow F_1\times F_2\ for\ t\in[0,\delta],\ \ g^t|_{t=0}=g,
\]
such that the followings hold at $g^t|_{t=\delta}$:
\begin{itemize}
    \item $g^\delta$ is a function whose bisingular set $S$ is a smooth submanifold of $F$,
    \item the elements in $S$ are bifold points of $g^\delta$ except finitely many cusp points,
    \item each function $g_1^\delta|_S$ and $g_2^\delta|_S$ in the homotopy has only finitely many transversal self intersections.
\end{itemize}
\end{them}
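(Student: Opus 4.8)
The statement is the concatenation of the three globalization theorems already established: Theorem \ref{thm2} (the bisingular set becomes a smooth submanifold), Theorem \ref{main5} (the bisingular points become bifold points with finitely many cusps), and Theorem \ref{main6} (the restrictions $g_i|_S$ acquire only transversal self-intersections). The plan is therefore to apply these in order, each time feeding the output of one step as the input of the next, and to argue that each later perturbation can be taken small enough that it does not destroy the properties secured by the earlier ones.

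First I would invoke Theorem \ref{thm2} to obtain $\delta_1>0$ and a regular homotopy of Lagrangian immersions $g^t$, $t\in[0,\delta_1]$, with $g^0=g$ and $g^{\delta_1}$ having a smooth bisingular submanifold $S$. By Lemma \ref{lem:trancri} this says precisely that every bisingular point of $g^{\delta_1}$ is a transverse bisingular point, so $g^{\delta_1}$ satisfies the hypothesis of Theorem \ref{main5}. Applying Theorem \ref{main5} to $g^{\delta_1}$ gives $\delta_2>0$ and a regular homotopy extending on $[\delta_1,\delta_1+\delta_2]$ whose endpoint has all bisingular points bifold except finitely many cusp points of $g_1$ or $g_2$. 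Here the first place where a smallness argument is needed: transversality of $dg_i$ to $S_1$ is an open condition on $g$ in the $C^1$ topology, and the perturbations produced in Theorem \ref{main5} are supported near $S$ and controlled by the parameters $t_j\in(0,\delta)^N$; shrinking $\delta_2$ keeps the bisingular set smooth throughout the homotopy. Finally, applying Theorem \ref{main6} to the result (its hypothesis is exactly ``bisingular set consists of bifold points and finitely many cusps'') yields $\delta_3>0$ and a further extension whose endpoint has $g_1|_S$ and $g_2|_S$ with only finitely many transversal self-intersections. Again shrinking $\delta_3$: fold and cusp singularities are stable under $C^\infty$-small perturbations (this is the content of the jet-transversality discussion in Section \ref{sec:p}, since the transversality of $dg_i$ to $S_1$ and of $j^2(g_i)$ to $S_{1,1}$ are open conditions), so the first two bulleted conclusions survive.

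It remains to assemble the three homotopies into one regular homotopy on a single interval $[0,\delta]$. Concatenating and reparametrizing by a smooth surjection $[0,\delta]\to[0,\delta_1]\cup[\delta_1,\delta_1+\delta_2]\cup[\delta_1+\delta_2,\delta_1+\delta_2+\delta_3]$ that is constant to infinite order at the two interior breakpoints produces a smooth family; since each piece is a regular homotopy of Lagrangian immersions and the Lagrangian and immersion conditions are pointwise-in-$t$ conditions, the concatenation is again a regular homotopy of Lagrangian immersions starting at $g$. Its endpoint satisfies all three listed properties.

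\textbf{Main obstacle.} The only genuinely delicate point is the ``small enough'' bookkeeping: one must verify that the perturbation of Theorem \ref{main6}, which rotates the image near bisingular points via a Hamiltonian flow, does not reintroduce non-transverse bisingular points or merge/split cusps. This follows from openness of the relevant jet-transversality conditions (a finite list of open conditions on a compact surface, hence satisfied on a whole neighborhood of the current $g$ in the $C^\infty$ topology), so it suffices to choose each $\delta_i$ below the corresponding threshold; but this is precisely the step that requires care rather than a one-line citation.
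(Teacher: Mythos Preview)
Your proposal is correct and follows exactly the paper's approach: apply Theorems \ref{thm2}, \ref{main5}, and \ref{main6} in succession and concatenate the resulting homotopies, taking $\delta=\delta_1+\delta_2+\delta_3$. In fact you are more careful than the paper, which does not spell out the openness argument for why the later perturbations preserve the earlier properties or the smoothing at the breakpoints.
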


\begin{proof}
We first apply Theorem \ref{thm2} to get a real number $\delta_1>0$ such that $g^{\delta_1}$ is a function whose bisingular set is a smooth submanifold of $F$. Then Theorem \ref{main5} can be applied to get a a real number $\delta_2>0$ such that not only the bisingular set $S$ of $g^{\delta_1+\delta_2}$ is a smooth submanifold, but also the elements in $S$ are bifold points of $g^{\delta_1+\delta_2}$ except finitely many cusp points. Finally, we use Theorem \ref{main6} to find a number $\delta_3>0$ such that all the properties in the conclusion of the theorem hold with $\delta=\delta_1+\delta_2+\delta_3$.
\end{proof}

\section{{Lagrangian correspondences and Lagrangian compositions on closed surfaces}}\label{sec:Lagcomp}
This section gives the definitions of Lagrangian correspondences and Lagrangian compositions. The concept of Lagrangian correspondences is needed to define Lagrangian compositions. Lagrangian correspondences are special cases of Lagrangian immersions, as in the definition below. When both terms--Lagrangian correspondence and Lagrangian immersion--apply, we use Lagrangian correspondence to stress compositions of Lagrangian immersions.\\
{We begin this section by discussing general Lagrangian correspondences in symplectic manifolds and subsequently narrow our focus to the situation where the symplectic manifolds are closed surfaces. Because Lagrangian immersions of curves and Lagrangian correspondences between closed surfaces are the specific cases required in Section \ref{sec:Lagcov}.}

\begin{defn}
Given symplectic manifolds $(X_0, \omega_0)$ and $(X_1, \omega_1)$, a {\bf Lagrangian correspondence} from $X_0$ to $X_1$ is a Lagrangian immersion
\[
l=(l_0,l_1):L\rightarrow X_0
\times X_1,
\]
where $X_0\times X_1$ is equipped with the symplectic structure $\omega_0\times(-\omega_1)$.
\end{defn}

\begin{defn}\label{def:lacom}
Let $(X_0, \omega_0)$, $(X_1, \omega_1)$, $(X_2, \omega_2)$ be three symplectic manifolds. Then $(X_0\times X_1,\omega_0\times (-\omega_1))$, $(X_1\times X_2,\omega_1\times (-\omega_2))$, $(X_0\times X_2,\omega_0\times (-\omega_2))$ are symplectic manifolds. Assume that 
\[
l^0=(l_0^0,l^0_1):L_{01}\rightarrow X_0
\times X_1\quad and\quad l^1=(l_1^1,l^1_2):L_{12}\rightarrow X_1
\times X_2
\]
are Lagrangian correspondences. The {\bf Lagrangian composition} $L_{01}\circ L_{12}$ of $L_{01}$ and $L_{12}$ is defined as
\begin{equation}\label{equ:lagc}
    l^0\circ l^1:=(l^0_0,l^1_2):L_{01}\circ L_{12}\rightarrow X_0\times X_2,
\end{equation}
where 
\[
L_{01}\circ L_{12}:=L_{01}\times_{X_1} L_{12}=\{(x,y)\in L_{01}\times L_{12}|l^0_1(x)=l^1_1(y)\}.
\]
\end{defn}

\begin{rmk}\label{rmk:lagcom}
As a matter of fact, 
\[
L_{01}\circ L_{12}=(l^0\times l^1)^{-1}(X_0\times\Delta_{X_1}\times X_2),
\]
where 
\[
\Delta_{X_1}:=\{(x,x)|x\in X_1\}
\]
is the diagonal of $X_1\times X_1$. Therefore, if $l^0\times l^1$ is transverse to $X_0\times\Delta_{X_1}\times X_2$, then $L_{01}\circ L_{12}$ is a smooth manifold according to the transversality theorem.
\end{rmk}

The composition $(L_{01}\circ L_{12},l^0\circ l^1;X_0\times X_1)$ in Definition \ref{def:lacom} is not always a Lagrangian immersion. The following proposition gives a sufficient criterion.

\begin{prop}[\cite{wehrheim2010functoriality}]\label{prop:lagco}
Let $(X_0, \omega_0)$, $(X_1, \omega_1)$ and $(X_2, \omega_2)$ be three symplectic manifolds. Then $(X_0\times X_1,\omega_0\times (-\omega_1))$, $(X_1\times X_2,\omega_1\times (-\omega_2))$ and $(X_0\times X_2,\omega_0\times (-\omega_2))$ are symplectic manifolds. Suppose that 
\[
\Delta_{X_1}=\{(x,x)|x\in X_1\}\subset X_1\times X_1
\]
is the diagonal and 
\[
l^0=(l_0^0,l^0_1):L_{01}\rightarrow X_0
\times X_1,l^1=(l_1^1,l^1_2):L_{12}\rightarrow X_1
\times X_2
\]
are Lagrangian correspondences. If $l^0\times l^1$ is transverse to $X_0\times\Delta_{X_1}\times X_2$, then 
\begin{itemize}
    \item $L_{01}\circ L_{12}$ is a smooth manifold,
    \item $(L_{01}\circ L_{12},l^0\circ l^1;X_0\times X_2)$ is a Lagrangian immersion.
\end{itemize}
\end{prop}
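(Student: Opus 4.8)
The plan is to verify the two conclusions separately, both as consequences of the transversality hypothesis and Remark \ref{rmk:lagcom}. For the first bullet, the identity
\[
L_{01}\circ L_{12}=(l^0\times l^1)^{-1}(X_0\times\Delta_{X_1}\times X_2)
\]
from Remark \ref{rmk:lagcom} exhibits the composition as a preimage of a submanifold; since $l^0\times l^1:L_{01}\times L_{12}\to X_0\times X_1\times X_1\times X_2$ is a smooth map and it is assumed transverse to the submanifold $X_0\times\Delta_{X_1}\times X_2$, the Transversality Theorem (Theorem 4.4 of \cite{golubitsky2012stable}, quoted above) immediately gives that $L_{01}\circ L_{12}$ is a smooth submanifold of $L_{01}\times L_{12}$, hence a smooth manifold. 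I would also record its dimension via the codimension count: $\Delta_{X_1}$ has codimension $\dim X_1$ in $X_1\times X_1$, so $L_{01}\circ L_{12}$ has codimension $\dim X_1$ in $L_{01}\times L_{12}$, i.e.\ dimension $\tfrac12\dim X_0+\tfrac12\dim X_2$, which is exactly half of $\dim(X_0\times X_2)$ — the expected Lagrangian dimension.

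For the second bullet, I would first check that $l^0\circ l^1=(l^0_0,l^1_2)$ is an immersion on $L_{01}\circ L_{12}$, and then that it is isotropic (its pullback of $\omega_0\times(-\omega_2)$ vanishes); together with the dimension count above, isotropic plus half-dimensional gives Lagrangian. Both points are local linear-algebra statements at a point $(x,y)\in L_{01}\circ L_{12}$ with $l^0_1(x)=l^1_1(y)=:z\in X_1$. Write $V=T_{(x,y)}(L_{01}\circ L_{12})$, which by the first part is the kernel of the composite $T_xL_{01}\oplus T_yL_{12}\xrightarrow{(dl^0_1,\,-dl^1_1)}T_zX_1$. A tangent vector in $V$ is a pair $(u,v)$ with $dl^0_1(u)=dl^1_1(v)$. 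The differential of $l^0\circ l^1$ sends $(u,v)$ to $(dl^0_0(u),dl^1_2(v))\in T_{l^0_0(x)}X_0\oplus T_{l^1_2(y)}X_2$. For the isotropy computation, $(\omega_0\times(-\omega_2))$ evaluated on two such images splits as $\omega_0$ on the $X_0$-components minus $\omega_2$ on the $X_2$-components; using that $l^0$ is Lagrangian in $(X_0\times X_1,\omega_0\times(-\omega_1))$ one rewrites the $X_0$-term as the $\omega_1$-pairing of the $dl^0_1$-images, and similarly the $X_2$-term becomes the $\omega_1$-pairing of the $dl^1_1$-images; since $dl^0_1(u)=dl^1_1(v)$ for vectors in $V$, these two contributions cancel. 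This is the standard "Lagrangian correspondences compose to Lagrangians" argument and I expect it to go through with only bookkeeping of signs.

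The injectivity of $d(l^0\circ l^1)$ on $V$ is where the transversality hypothesis is genuinely used again, and I expect this to be the main (though still routine) obstacle. Suppose $(u,v)\in V$ maps to $0$, i.e.\ $dl^0_0(u)=0$ and $dl^1_2(v)=0$. Since $l^0=(l^0_0,l^0_1)$ is an immersion, $dl^0_0(u)=0$ forces $dl^0_1(u)\neq 0$ unless $u=0$; likewise $dl^1_2(v)=0$ forces $dl^1_1(v)\neq 0$ unless $v=0$. Combined with $dl^0_1(u)=dl^1_1(v)$ this says that a nonzero element of $T_zX_1$ lies in $\im(dl^0_1)\cap\im(dl^1_1)$ in a way obstructed by the transversality of $l^0\times l^1$ to $X_0\times\Delta_{X_1}\times X_2$: that transversality, spelled out at $(x,y)$, is precisely the condition $\im(dl^0)_x+\im(dl^1)_y+T(X_0\times\Delta_{X_1}\times X_2)=T(X_0\times X_1\times X_1\times X_2)$, which after projecting away the $X_0$ and $X_2$ factors becomes $\im(dl^0_1)_x+\im(dl^1_1)_y=T_zX_1$; a short dimension count then shows a nonzero common vector would violate the half-dimensionality, forcing $u=v=0$. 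I would present this as a clean lemma-style linear-algebra step, cite \cite{wehrheim2010functoriality} for the original statement, and conclude that $(L_{01}\circ L_{12},l^0\circ l^1;X_0\times X_2)$ is a Lagrangian immersion.
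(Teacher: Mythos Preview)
The paper does not give its own proof of this proposition; it is stated with a citation to \cite{wehrheim2010functoriality} and used as a black box. So there is no paper proof to compare against, and your outline is the standard Wehrheim--Woodward argument. The smoothness, dimension, and isotropy steps are correct as you wrote them.

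The one genuine gap is in your injectivity step. Knowing only that $\im(dl^0_1)_x+\im(dl^1_1)_y=T_zX_1$ together with ``a nonzero common vector'' does \emph{not} yield a contradiction by a dimension count: those two images can (and typically do) intersect nontrivially without any problem. What you actually need is that the vector $w:=dl^0_1(u)=dl^1_1(v)$ lies not merely in the intersection of the images, but in the much smaller subspaces
\[
A=\{dl^0_1(u'):dl^0_0(u')=0\},\qquad B=\{dl^1_1(v'):dl^1_2(v')=0\},
\]
and the argument that $A\cap B=0$ uses the Lagrangian condition a second time, not just dimensions. Concretely: since $dl^0_0(u)=0$ and $l^0$ is Lagrangian, for every $u'\in T_xL_{01}$ one has $\omega_1(w,dl^0_1(u'))=\omega_0(dl^0_0(u),dl^0_0(u'))=0$, so $w\in(\im dl^0_1)^{\omega_1}$; similarly $w\in(\im dl^1_1)^{\omega_1}$. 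Hence $w$ is $\omega_1$-orthogonal to $\im(dl^0_1)+\im(dl^1_1)=T_zX_1$, and nondegeneracy of $\omega_1$ forces $w=0$. Then $dl^0(u)=0$, $dl^1(v)=0$, and the immersion hypotheses give $u=v=0$. Replace your ``short dimension count'' sentence with this symplectic-orthogonality step and the proof is complete.
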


\begin{defn}\label{def:composable}
Given Lagrangian correspondences $L_{01}$ and $L_{12}$ in $(X_0
\times X_1,\omega_0\times (-\omega_1))$ and $(X_1
\times X_2,\omega_1\times (-\omega_2))$ respectively, these two Lagrangian correspondences $L_{01}$ and $L_{12}$ are said to be {\bf composable} if all the assumptions in Proposition \ref{prop:lagco} are satisfied.
\end{defn}

{Let $X_1=F_1$, $X_2=F_2$, and $L_{12}=F$ be three closed {oriented} surfaces. By Example \ref{exp:surface}, $F_1$ and $F_2$ are symplectic manifolds. Because every immersed curve on closed surfaces is a Lagrangian immersion. If $L_1\looparrowright F_1$ is an immersed curve, then $L_1$ can be regarded as a Lagrangian correspondence from $\{pt\}$ to $F_1$. Let $F\looparrowright F_1\times F_2$ be a Lagrangian correspondence from a closed surface. If $L_1$ and $F$ are composable, then $L_1\circ F\looparrowright F_2$ is an immersed curve. Similarly, if $L_2$ is an immersed curve into $F_2$, then $L_2$ is a Lagrangian correspondence from $F_2$ to $\{pt\}$. If $L_2$ is composable with $F$, then $F\circ L_2\looparrowright F_1$ is an {immersed} curve. {\bf These are the two cases we focus on in the rest part of the paper.}}

%add examples of last section

\begin{prop}\label{prop:sg}
{Assume $(X_1,\omega_1)$ and $(X_2,\omega_2)$ are closed symplectic surfaces.} Let $L_1$, $L_{12}$, $L_2$ be Lagrangian immersions in $(X_1,\omega_1)$, $(X_1\times X_2,\omega_1\times(-\omega_2))$, $(X_2,\omega_2)$, resp. Suppose $L_1$ and $L_{12}$, $L_{12}$ and $L_2$ are composable. {If moreover $L_1$ and $L_{12}\circ L_2$, $L_1\circ L_{12}$ and $L_2$ both intersect transversely}, then there is a canonical bijection between {the elements of $L_1\times_{X_1}(L_{12}\circ L_2)$ and $(L_1\circ L_{12})\times_{X_2}L_2$ $($Recall that the fiber product represents the intersection of immersions, see Definition \ref{def:lagint} and Remark \ref{rmk:lagint}$)$.}
\end{prop}

\begin{proof}
Let 
\[
l^1:L_1\rightarrow X_1, l=(l_1,l_2):L_{12}\rightarrow X_1\times X_2,l^2:L_{2}\rightarrow X_2
\]
be the Lagrangian immersions. According to the definition of fiber products,
\begin{align*}
    L_1\times_{X_1}(L_{12}\circ L_2)
    &=L_1\times_{X_1}\{(y,z)\in L_{12}\times L_2|l_2(y)=l^2(z)\}\\
    &=\{(x,y,z)\in L_1\times L_{12}\times L_2|l^1(x)=l_1(y),l_2(y)=l^2(z)\}.
\end{align*}
Similarly,
\begin{align*}
(L_1\circ L_{12})\times_{X_2} L_2
    &=\{(x,y)\in L_{1}\times L_{12}|l^1(x)=l_1(y)\}\times_{X_2} L_2\\
    &=\{(x,y,z)\in L_1\times L_{12}\times L_2|l^1(x)=l_1(y),l_2(y)=l^2(z)\}.   
\end{align*}
The proposition follows.
\end{proof}

{The key point of finding Lagrangian compositions is to decide its domain first. The idea is to regard the domain of the Lagrangian composition as a part of the Lagrangian correspondence $L_{12}$ by using the fiber product $L_1\times_{X_1}L_{12}$. For instance, in Example \ref{exp:8.10}, $X_1=F_1$, $X_2=F_2$, $L_{12}=F$, and $L_1$ is the blue curve in the lower left picture of Figure 3. Then the domain of the Lagrangian composition $L_1\circ L_{12}$ is the blue curve in the upper left picture. The specific map of $L_1\circ L_{12}$ is induced by the second component of $L_{12}\looparrowright X_1\times X_2$. More examples can be found in the next section.}

\begin{rmk}
    {The concept Lagrangian correspondence was first proposed by Weinstein in \cite{weinstein1977lectures} and \cite{weinstein2006symplectic} aimed at using Lagrangian submanifolds to construct morphisms of the category whose objects are symplectic manifolds. But in general the composition of two Lagrangian submanifolds is not a manifold except if some transversality conditions are satisfied (e.g. in the case where Lagrangian immersions are replaced by Lagrangian submanifolds and the conditions in Proposition \ref{prop:lagco} are satisfied).}
\end{rmk}

\section{Lagrangian immersions given by covering maps}\label{sec:Lagcov}

{Let $F_1,F_2$ be closed symplectic surfaces and $L_i\looparrowright F_i$, $F\looparrowright F_1\times F_2$ be Lagrangian immersions, $i=1,2$. The aim of this section is to use the Lagrangian correspondence $F$ between the two closed surfaces $F_1,F_2$ to compare the oriented bigons connecting the points in $L_1\times_{F_1}(F\circ L_2)$ $($Definition \ref{def:lagint} and Remark \ref{rmk:lagint}$)$ with its boundary in $L_1$ and $F\circ L_2$ with the oriented bigons connecting the points in $(L_1\circ F)\times_{F_2} L_2$ with its boundary in $L_1\circ F$ and $L_2$.} Theorem \ref{thm:main} produces a way to calculate the Lagrangian compositions when such Lagrangian correspondences are generic (i.e. satisfy the conclusion of Theorem \ref{thm:main}). This also provides a hint of how to recover the information of {the oriented bigons connecting points in $(L_1\times L_2)\times_{F_1\times F_2}F$ with its bounday in $L_1\times L_2$ and $F$} (Subsection \ref{subsec:8.2}). To achieve the aim, it is necessary first to understand the case where the Lagrangian correspondence has no bisingularities (Subsection \ref{subsec:8.1}).

\begin{defn}
    {Let $(X,\omega)$ be a symplectic manifold and $L\looparrowright X$, $L'\looparrowright X$ be Lagrangian immersions. {\bf A bigon of $\mathbf{L\times_X L'}$} is an oriented immersed unit disc satisfying the following conditions:
    \begin{itemize}
        \item its boundary is the unit circle, the map of the upper half circle can be {lifted} to $L$ and the map of the lower half circle can be {lifted} to $L'$,
        \item the two points $(\pm1,0)$ on the boundary circle {are} mapped to two different points $x,y\in L\times_X L'$,
        \item the image of the disc near $x,y$ are convex corners $($Figure 3$)$.
    \end{itemize}}
\end{defn}

\begin{figure}[H] 
\centering 
\includegraphics[width=0.2\textwidth]{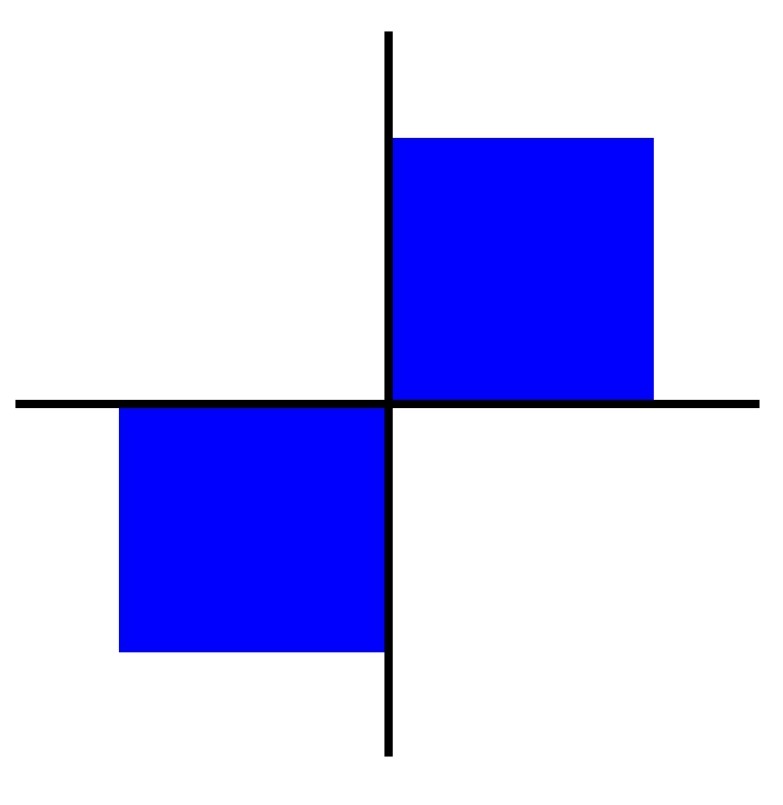}
\caption{{The convex corners are marked in blue. The blue in the first quadrant corresponds to the part of the bigon near $(-1,0)$ and the blue in the third quadrant corresponds to the part of the bigon near $(1,0)$. The horizontal line is a part of $L$ and the vertical line is a part of $L'$. }}
\end{figure}

\begin{rmk}
    {In this section, the above definition is constrained to the cases where:
    \begin{itemize}
        \item $X=F_1,L=L_1,L'=F\circ L_2$,
        \item $X=F_2,L=L_1\circ F,L'=L_2$.
        \item $X=F_1\times F_2,L=L_1\times L_2,L'=F$.
    \end{itemize}}
\end{rmk}

{With additional assumptions, the points in $(L_1\circ F)\times_{F_2} L_2$ form the generators of the Floer chain complex $\CF(L_1\circ F, L_2)$ whose boundary map is obtained by counting bigons of $(L_1\circ F)\times_{F_2}L_2$, and similarly for $\CF(L_2,F\circ L_2)$. For example, see Abouzaid \cite{abouzaid2008fukaya} in the case where the genus of the surfaces are greater than 1 and the Lagrangian immersions are unobstructed.}

{More generally, Wehrheim and Woodward \cite{wehrheim2010quilted} introduced the quilted Lagrangian Floer complex $\CF^Q(L_1,F,L_2)$ whose generators are $(L_1\times L_2)\times_{F_1\times F_2}F$. Their definition requires strict restrictions on $F,L_i,F_i,i=1,2$, and they  proved the isomorphism between the Lagrangian Floer homology group and the corresponding quilted Lagrangian Floer homology group.}

{In this paper, we only compare the bigons of $L_1\times_{F_1}(F\circ L_2)$ with the bigons of $(L_1\circ F)\times_{F_2}L_2$.}

\subsection{Lagrangian immersions without bisingularities}\label{subsec:8.1}
Let $g_1: F\rightarrow F_1$ and $g_2:F\rightarrow F_2$ be two covering maps between closed surfaces. Denote $\omega_1$, $\omega_2$, $\omega_1\times(-\omega_2)$ as the symplectic forms on $F_1$, $F_2$, $F_1\times F_2$ respectively. Assume that
\[
(g_1,g_2):F\rightarrow F_1\times F_2
\] 
is a Lagrangian correspondence from $F_1$ to $F_2$. Suppose that $L_1\looparrowright F_1$ and $L_2\looparrowright F_2$ are two immersed curves, then according to Example \ref{exp:2.1}, $L_1$ and $L_2$ are Lagrangian immersions into $F_1$ and $F_2$, resp. Theorem \ref{prop:sg} shows that {there is a canonical identification between $(L_1\circ F)\times_{F_2}L_2$ and $L_1\times_{F_1}(F\circ L_2)$}. This subsection is about the relation of {the bigons of $L_1\times_{F_1}(F\circ L_2)$ with the bigons of $(L_1\circ F)\times_{F_2} L_2$}.

\begin{thm}\label{1}
Let $F$, $F_1$, $F_2$ be three closed surfaces. Denote $\omega_1$, $\omega_2$, $\omega_1\times(-\omega_2)$ as the symplectic forms on $F_1$, $F_2$, $F_1\times F_2$ respectively. Suppose that
\[
g=(g_1,g_2): F\rightarrow F_1\times F_2
\]
is a Lagrangian correspondence from $F_1$ to $F_2$ and both $g_1$ and $g_2$ are covering maps. {Then} two immersed curves 
\[
L_1\looparrowright F_1\ \text{and}\ L_2\looparrowright F_2
\]
are composable with  $F\looparrowright F_1\times F_2$ and there is a {canonical bijection between the points in $(L_1\circ F)\times_{F_2} L_2$ and the points $L_1\times_{F_1} (F\circ L_2)$. Moreover this bijection can be extended to a bijection between the bigons $(L_1\circ F)\times_{F_2} L_2$ and the bigons of $L_1\times_{F_1} (F\circ L_2)$.}
\end{thm}

\begin{rmk}
    {The elements of $(L_1\circ F)\times_{F_2} L_2$ and  $L_1\times_{F_1} (F\circ L_2)$ generate the Lagrangian Floer chain groups $\CF(L_1\circ F,L_2)$ and $\CF(L_1,F\circ L_2)$ respectively. Moreover the boundary maps of these two chain groups are defined by counting the number of bigons of $(L_1\circ F)\times_{F_2} L_2$ and  $L_1\times_{F_1} (F\circ L_2)$. If all the surfaces have their genus greater than 1 and all the immersed curves are unobstructed, then according to Abouzaid \cite{abouzaid2008fukaya}, $\CF(L_1\circ F,L_2)$ and $\CF(L_1,F\circ L_2)$ are chain complexes, and the above theorem shows that the homology groups of $\CF(L_1\circ F,L_2)$ and $\CF(L_1,F\circ L_2)$ are isomorphic.}
\end{rmk}

This theorem is obtained by applying the following lemmas.

\begin{lem}
Let $g:F\rightarrow F'$ be a $p$-fold covering map between closed surfaces, where $p$ is a positive integer. Given an immersed closed curve $(L,f;F')$, then there is a $p$-fold covering $\tilde L$ of $L$ such that the following commutative diagram holds:
\begin{equation*}
    \xymatrix{
    \tilde L \ar[d]\ar[r]^{\tilde f}& F\ar[d]^g\\
    L \ar[r]^f& F'.
    }
\end{equation*}
The function $\tilde f$ in the diagram is called the {\bf covering lift} of $f$.
\end{lem}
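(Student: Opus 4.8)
The plan is to obtain $\tilde L$ as the fiber product (pullback) of the covering $g\colon F\to F'$ along the immersion $f\colon L\to F'$. Concretely, set
\[
\tilde L:=L\times_{F'}F=\{(l,y)\in L\times F\mid f(l)=g(y)\},
\]
with the two projections $\pi_L\colon\tilde L\to L$, $(l,y)\mapsto l$, and $\tilde f\colon\tilde L\to F$, $(l,y)\mapsto y$. Commutativity of the required square is then immediate, since $g(\tilde f(l,y))=g(y)=f(l)=f(\pi_L(l,y))$. Thus the only real content is to check that $\pi_L$ is a smooth $p$-fold covering map and that $\tilde L$ is again a closed curve, so that $\tilde f$ deserves to be called the covering lift of $f$.

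The key step is to verify that $\pi_L$ is a covering of degree $p$. Given $l_0\in L$, choose an open neighbourhood $U\subset F'$ of $f(l_0)$ that is evenly covered by $g$, so $g^{-1}(U)=\bigsqcup_{i=1}^{p}V_i$ with each $V_i$ mapped diffeomorphically onto $U$ by $g$. Setting $W:=f^{-1}(U)\ni l_0$, one gets a homeomorphism $\pi_L^{-1}(W)\cong\bigsqcup_{i=1}^{p}W$ under which $\pi_L$ is the identity on each piece. Hence $\tilde L$ is locally the disjoint union of finitely many copies of an open subset of the $1$-manifold $L$; in particular it inherits a smooth $1$-manifold structure and $\pi_L$ is a covering map whose fibers all have cardinality $p$. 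Since $L$ is a closed $1$-manifold and $\pi_L$ has finite degree, $\pi_L$ is proper and $\tilde L$ is compact, hence a disjoint union of circles; so $(\tilde L,\tilde f;F)$ is a (possibly disconnected) closed curve.

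It is worth recording, although the statement does not demand it, that $\tilde f$ is automatically an immersion: in the local model above, on the piece of $\pi_L^{-1}(W)$ corresponding to the sheet $V_i$ one has $\tilde f=(g|_{V_i})^{-1}\circ f\circ\pi_L$, a composition of the immersion $f$ with local diffeomorphisms. I do not expect any genuine obstacle here. Everything reduces to the standard fact that the pullback of a covering space is a covering space of the same degree, together with the observation that a finite-degree covering of a closed $1$-manifold is again a closed $1$-manifold; the one point requiring a line of care — the smooth structure on $\tilde L$ — is handed to us for free by the evenly covered charts, and being one-dimensional there are no transversality subtleties. (If a connected $\tilde L$ were wanted one would pass to a connected component of the pullback, covering $L$ with a smaller degree, but the degree-$p$ statement as phrased is exactly the full fiber product.)
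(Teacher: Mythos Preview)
Your proof is correct and in fact cleaner than the paper's. The paper argues by cutting $L$ open at a point to obtain a map $f_1:[0,1]\to F'$, invoking the path-lifting property of coverings to get a lift $\tilde f_1:[0,1]\to F$, then applying deck transformations to produce $p$ such lifts and gluing their domains according to how the endpoints match up. Your fiber-product construction $\tilde L=L\times_{F'}F$ accomplishes the same thing in one stroke: it is the standard fact that the pullback of a $p$-fold covering is a $p$-fold covering, and you correctly extract the smooth $1$-manifold structure from the evenly covered charts. Your approach has the advantage of not tacitly assuming $L$ is connected or that the covering is regular (the paper's phrase ``the deck transformation of the covering map $g$ can be performed'' is loose on this point), and your remark that $\tilde f$ is automatically an immersion is a useful bonus the paper omits. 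The paper's approach is perhaps more vivid if one wants to picture $\tilde L$ explicitly as a union of arcs, but yours is the more robust argument.
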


\begin{rmk}
    {The covering lift defined above is the same as the fiber product in category theory.}
\end{rmk}

\begin{proof}
If $L$ is cut open at a point, we get a line segment $[0,1]$. Then $f$ induces a map $f_1$ as in the following diagram:
\begin{equation*}
    \xymatrix{
    L\ar[r]^f&F'\\
    [0,1]\ar[u]\ar[ur]^{f_1}.
    }
\end{equation*}
The map $f_1$ can be lifted into the covering space $F$:
\begin{equation*}
    \xymatrix{
    &F\ar[d]^g\\
    [0,1]\ar[r]^{f_1}\ar[ur]^{\tilde f_1}&F'.
    }
\end{equation*}
Then the deck transformation of the covering map $g$ can be performed to $f_1$ to get $p$ maps
\[
\tilde f_i:[0,1]\rightarrow F,\ \ i=1,\ldots,p.
\]
Finally glue these $p$ maps together according to the deck transformation we have the covering space $\tilde L$ of $L$ and the map $\tilde f$ as in the theorem.
\end{proof}

\begin{lem}\label{lem:idecov}
Let $g:F\rightarrow F'$ be a $p$-fold covering map between closed surfaces, where $p$ is a positive integer. Let $(\tilde L_1,\tilde f_1; F)$ and $(L_2,f_2;F')$ be two immersed curves whose self intersections are transversal. Assume the image of $g\circ\tilde f_1$ misses the self intersections of $L_2$. Denote $\tilde f_2:\tilde L_2\rightarrow F$ as the covering lift of $f_2$. Then the following sets can be identified:
\[
\{(\tilde x_1,x_2)\in\tilde L_1\times L_2|\ g\circ\tilde{f_1}(\tilde x_1)=f_2(x_2)\}
    \cong\{(\tilde x_1,\tilde x_2)\in\tilde L_1\times\tilde L_2|\ \tilde{f_1}(\tilde x_1)=\tilde{f_2}(\tilde x_2)\}.
\]
\end{lem}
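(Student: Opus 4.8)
The plan is to exhibit both sides as fiber products (in the sense of Definition \ref{def:lagint}) over a common target and then peel off the covering $g$ by a short diagram chase. The essential input is that the covering $\pi_2\colon\tilde L_2\to L_2$ together with its lift $\tilde f_2$, produced by the preceding lemma, is canonically identified with the pullback of $g\colon F\to F'$ along $f_2$, namely
\[
\tilde L_2\;\cong\;L_2\times_{F'}F=\{(x_2,z)\in L_2\times F\mid f_2(x_2)=g(z)\},
\]
with $\pi_2$ the projection to $L_2$ and $\tilde f_2$ the projection to $F$. I would first check this: the cut-open/lift/reglue construction in the preceding lemma glues the $p$ lifts of the cut-open arc exactly by the monodromy permutation of $g$ along the loop $f_2\colon L_2\to F'$, and this is precisely the monodromy of the pullback covering; in particular $\tilde f_2$ restricts to a bijection of each fiber $\pi_2^{-1}(x_2)$ onto the fiber $g^{-1}(f_2(x_2))$. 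This step needs no regularity hypothesis on $g$, and I expect it to be the only genuinely delicate point, since the earlier proof is phrased via deck transformations as though $g$ were a normal covering.

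Granting the identification, the two maps are immediate. From left to right I would send a pair $(\tilde x_1,x_2)$ with $g\tilde f_1(\tilde x_1)=f_2(x_2)$ to $\bigl(\tilde x_1,(x_2,\tilde f_1(\tilde x_1))\bigr)$; the defining relation of the left-hand set is exactly the condition $f_2(x_2)=g(\tilde f_1(\tilde x_1))$ needed for $(x_2,\tilde f_1(\tilde x_1))$ to lie in $\tilde L_2=L_2\times_{F'}F$, and $\tilde f_2$ of this point is $\tilde f_1(\tilde x_1)$, so the image lies in the right-hand set. From right to left I would send $(\tilde x_1,\tilde x_2)$ with $\tilde f_1(\tilde x_1)=\tilde f_2(\tilde x_2)$ to $(\tilde x_1,\pi_2(\tilde x_2))$; applying $g$ and using $g\circ\tilde f_2=f_2\circ\pi_2$ yields $g\tilde f_1(\tilde x_1)=f_2(\pi_2(\tilde x_2))$. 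The two maps are mutually inverse by a one-line check, using that every element of $L_2\times_{F'}F$ equals $(\pi_2(\tilde x_2),\tilde f_2(\tilde x_2))$; at bottom this is just associativity of fiber products, $\tilde L_1\times_F\tilde L_2=\tilde L_1\times_F(L_2\times_{F'}F)\cong\tilde L_1\times_{F'}L_2$, with the map $\tilde L_1\to F'$ taken to be $g\circ\tilde f_1$.

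I would close by noting the part played by the remaining hypotheses. The bijection above is purely set-theoretic and uses only the commuting covering square; transversality of the self-intersections of $\tilde L_1$ and $L_2$, together with the assumption that $g\tilde f_1(\tilde L_1)$ avoids the self-intersection points of $L_2$, are there so that these fiber products are finite sets of honest transverse intersection points---the generators of the two Floer complexes in Theorem \ref{1}---and so that the lifted curve $\tilde f_2(\tilde L_2)$ still has only transverse self-intersections, met transversally by $\tilde f_1(\tilde L_1)$, which uses that $g$ is a local diffeomorphism. None of this is needed for the identification of sets as stated, so once the pullback identification is in place the argument is routine.
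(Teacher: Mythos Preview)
Your argument is correct and takes a genuinely different route from the paper. You identify $\tilde L_2$ with the pullback $L_2\times_{F'}F$ and then read off the bijection as associativity of fiber products, $\tilde L_1\times_F(L_2\times_{F'}F)\cong\tilde L_1\times_{F'}L_2$, writing down explicit mutually inverse maps. The paper instead argues existence and uniqueness by hand: given $(\tilde x_1,x_2)$ it produces a lift $\tilde x_2$ over $x_2$ hitting $\tilde f_1(\tilde x_1)$, and for uniqueness it invokes the hypothesis that $g\circ\tilde f_1$ misses the self-intersections of $L_2$ to conclude that $\tilde f_1(\tilde x_1)$ is not a self-intersection point of $\tilde L_2$, hence two such lifts must coincide.

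What your approach buys is clarity about the role of the hypotheses: once $\tilde L_2$ is identified with the pullback, $\tilde f_2$ restricts to a bijection from each fiber $\pi_2^{-1}(x_2)$ onto $g^{-1}(f_2(x_2))$, so uniqueness is automatic and the avoidance-of-self-intersections assumption is not needed for the set-theoretic bijection. The paper's hands-on argument reaches the same conclusion but leans on that extra hypothesis, effectively using it as a substitute for the fiberwise-bijection property of the pullback. Your side remark that the preceding lemma's ``deck transformation'' phrasing tacitly suggests a normal covering is also apt; what is actually being used is just the collection of all $p$ lifts of the cut arc, which is exactly the pullback description and needs no normality.
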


\begin{proof}
Let 
\[
\tilde g:\tilde L_2\rightarrow L_2
\]
be the map induced by the covering lift of $\tilde f_2$. This lemma is true if given $x_2\in L_2$, for all $\tilde x_1\in \tilde L_1$ with $g\circ\tilde{f_1}(\tilde x_1)=f_2(x_2)$, then there is a unique $\tilde x_2\in L_2$ such that
\[
\tilde g(\tilde x_2)=x_2\ and\ \tilde{f_1}(\tilde x_1)=\tilde{f_2}(\tilde x_2).
\]
The existence of $\tilde x_2$ is guaranteed by the definition of covering lift. Therefore we only need to show that given $(\tilde x_1,x_2)$ with $g\circ\tilde{f_1}(\tilde x_1)=f_2(x_2)$, if there are $\tilde x_2^1,\tilde x_2^2\in\tilde L_2$ such that 
\[
\tilde g(\tilde x_2^i)=x_2\ and\ \tilde{f_1}(\tilde x_1)=\tilde{f_2}(\tilde x_2^i)\ for\ i=1,2,
\] 
then $\tilde x_2^1=\tilde x_2^2$. Because the image of $g\circ\tilde f_1$ misses the self intersections of $L_2$, the image of $\tilde f_1$ misses the self intersections of $\tilde L_2$. Note that the self intersection of $\tilde L_2$ are transversal since the self intersections of $L_2$ are transversal. Thus if
\[
\tilde g(\tilde x_2^i)=x_2\ and\ \tilde{f_1}(\tilde x_1)=\tilde{f_2}(\tilde x_2^i)\ for\ i=1,2,
\]
then $\tilde x_2^1=\tilde x_2^2$.
\end{proof}

\begin{lem}\label{lem:comps}
    {Let $F$, $F_1$, $F_2$ be three closed surfaces. Denote $\omega_1$, $\omega_2$, $\omega_1\times(-\omega_2)$ as the symplectic forms on $F_1$, $F_2$, $F_1\times F_2$ respectively. Suppose that
\[
g=(g_1,g_2): F\rightarrow F_1\times F_2
\]
is a Lagrangian correspondence from $F_1$ to $F_2$ and both $g_1$ and $g_2$ are covering maps. } Then immersed curves $(L_1.f_1;F_1)$ and $(L_2,f_2;F_2)$ are composable with $g$.
\end{lem}

\begin{proof}
    {Proving that $(L_1,f_1;F_1)$ and $(L_2,f_2;F_2)$ being composable with $g$ is essential the same, so the following only includes the proof of the composability of $(L_1,f_1;F_1)$ with $g$. According to Definition (\ref{def:composable}), we need to show that $f_1\times g$ is transverse to $\Delta_{F_1}\times F_2$, where $\Delta_{F_1}$ is the diagonal of $F_1\times F_1$. This is equivalent to say that
    \[
    \im(df_1)_{x}+\im(dg_1)_y+T_{(f_1(x),g_1(y))}\Delta_{F_1}=T_{(f_1(x),g_1(y))}(F_1\times F_1),
    \]
    for all $x\in L_1$ and $y\in F$ such that $f_1(x)=g_1(y)$. In fact, since the projection of $T_{(f_1(x),g_1(y))}\Delta_{F_1}$ to the tangent space of the first component in $F_1\times F_1$ is surjective and $\im (dg_1)_y$ is the tangent space of the second component in $F_1\times F_1$ (because $g$ is a covering map), therefore the above identity always holds. This finishes the proof of the lemma.}
\end{proof}

%{Recall that on a closed surface, the generators of the Lagrangian Floer chain groups for two given immersed curves in this surface are the intersections of these two curves. The boundary map of this chain group is given by counting the number of {bigon}s with convex corners that connects two generators and whose boundary is in these immersed curves. More details can be found in Example \ref{exp:abou}.}

\begin{lem}\label{lem:covering}
{Let $F$, $F_1$, $F_2$ be three closed surfaces. Denote $\omega_1$, $\omega_2$, $\omega_1\times(-\omega_2)$ as the symplectic forms on $F_1$, $F_2$, $F_1\times F_2$ respectively. Suppose that
\[
g=(g_1,g_2): F\rightarrow F_1\times F_2
\]
is a Lagrangian correspondence from $F_1$ to $F_2$ and both $g_1$ and $g_2$ are covering maps.} {Given two composable immersed curves $(L_1.f_1;F_1)$ and $(L_2,f_2;F_2)$ $($they are composable with $g$ automaticly$)$}, define 
\[
\tilde f_i:\tilde{L_i}\rightarrow F,i=1,2,
\]
as the covering lift of $f_i$. Then {the bigons of $(L_1\circ F)\times_{F_2}L_2$ and the bigons of $\tilde{L_1}\times_F\tilde{L_2}$} can be identified.
\end{lem}

\begin{proof}
We first identify {the elements of $(L_1\circ F)\times_{F_2}L_2$ and $\tilde{L_1}\times_F\tilde{L_2}$}. From the definition of Lagrangian composition, the Lagrangian immersion $L_1\circ F$ is the composition
\[
g_2\circ\tilde{f_1}:\tilde{L_1}\looparrowright F\stackrel{g_2}\longrightarrow F_2.
\]
To apply the previous lemma, we perturb $L_1$, $L_2$ such that their self intersections are transversal and the image of $g_2\circ\tilde{f_1}$ misses the self intersections of $L_2$. By the definition of fiber products of Lagrangian immersions (\ref{def:lagint}) and Lemma \ref{lem:idecov}, then
\begin{equation}\label{equ:8.1}
    \begin{aligned}
     &(L_1\circ F)\times_{F_2} L_2\\
    =&\{(\tilde x_1,x_2)\in\tilde L_1\times L_2|\ g_2\circ\tilde{f_1}(\tilde x_1)=f_2(x_2)\}\\
    \cong&\{(\tilde x_1,\tilde x_2)\in\tilde L_1\times\tilde L_2|\ \tilde{f_1}(\tilde x_1)=\tilde{f_2}(\tilde x_2)\}\\
    =&\tilde{L_1}\times_F\tilde{L_2}.
    \end{aligned}
\end{equation}
Therefore the above equations give a bijective identification between {$x\in (L_1\circ F)\times_{F_2}L_2$ and $\tilde x\in \tilde{L_1}\times_F\tilde{L_2}$}.\\
Suppose {$D$ is a bigon connecting two points $x$ and $y$ in $(L_1\circ F)\times_{F_2}L_2$.} So the boundary $\partial D$ is a trivial element in the fundamental group of $F_2$, then the lifting property of covering spaces induces a lifted immersed circle in $F$. Moreover this immersed circle bounds a {bigon} $\tilde D$ lifted from $D$ in $F$. Then $\tilde D$ connects {$\tilde x, \tilde y\in\tilde{L_1}\times_F\tilde{L_2}$}, where $\tilde x,\tilde y$ correspond to $x,y$ respectively under the identification of Equation (\ref{equ:8.1}). Since $g_2$ is a covering map, $\tilde D$ has convex corners. Therefore $\tilde D$ is a {bigon of $\tilde{L_1}\times_F\tilde{L_2}$}. This gives a map from the {bigons of $(L_1\circ F)\times_{F_2} L_2$ to the bigons of $\tilde{L_1}\times_F\tilde{L_2}$}. It is clear from the construction that this map is injective. This map is also surjective because one can use $g_2$ to project the {bigons of $\tilde{L_1}\times_F\tilde{L_2}$ to the bigons of $(L_1\circ F)\times_{F_2} L_2$. Thus the bigons of $(L_1\circ F)\times_{F_2} L_2$ and the bigons of $\tilde{L_1}\times_F\tilde{L_2}$ are identified.} The lemma is thus proved.
\end{proof}

\begin{proof}[Proof of Theorem \ref{1}]
{By Lemma \ref{lem:comps}, immersed curves $L_1\looparrowright F_1$ and $L_2\looparrowright F_2$ are composable with the Lagrangian correspondence $F\looparrowright F_1\times F_2$.} According to the previous lemma, the {elements and the bigons of $(L_1\circ F)\times_{F_2} L_2$ and the elements and the bigons of $\tilde{L_1}\times_F\tilde{L_2}$ can be identified respectively}. The same argument shows that the {elements and bigons of $L_1\times_{F_1}(F \circ L_2)$ and the elements and bigons of $\tilde{L_1}\times_F\tilde{L_2}$ are identified respectively. According to the construction of the identification of these bigons, the correspondence from the bigons of $(L_1\circ F)\times_{F_2} L_2$ to the bigons of $L_1\times (F\circ L_2)$ extends the correspondence from $(L_1\circ F)\times_{F_2} L_2$ to $L_1\times (F\circ L_2)$. As a result, the theorem is true.}
\end{proof}

Then consider the case where the Lagrangian immersion $(F,g;F_1\times F_2)$
has bifold points only. According to Theorem \ref{thm:main}, the bisingular set $S$ can be assumed to be a smooth submanifold in $F$. Therefore $S$ is a disjoint union of circles.

\subsection{Lagrangian immersions with bisingularities}\label{subsec:8.2}
Let $g_1: F\rightarrow F_1$ and $g_2:F\rightarrow F_2$ be two covering maps between closed surfaces. Denote $\omega_1$, $\omega_2$, $\omega_1\times(-\omega_2)$ as the symplectic forms on $F_1$, $F_2$, $F_1\times F_2$ respectively. Assume that
\[
(g_1,g_2):F\rightarrow F_1\times F_2
\] 
is a Lagrangian correspondence from $F_1$ to $F_2$. Suppose that $L_1\looparrowright F_1$ and $L_2\looparrowright F_2$ are two immersed curves, then according to Example \ref{exp:2.1}, $L_1$ and $L_2$ are Lagrangian immersions into $F_1$ and $F_2$, resp. This subsection focuses on the case where the Lagrangian immersion $F\looparrowright F_1\times F_2$ only has embedded bifold singularities under the maps $g_1$ and $g_2$.

Recall that in the last subsection, if $g_1$ and $g_2$ have no singularities, {the bigons of $(L_1\circ F)\times_{F_2}L_2$ and the bigon of $L_1\times_{F_1}(F\circ L_2)$} behave well under the Lagrangian composition induced by $F\looparrowright F_1\times F_2$. The examples in this subsection show that if the following map (Definition \ref{def:8.4}) is performed near the bisingularities, then {the bigons of $(L_1\circ F)\times_{F_2}L_2$ and the bigons of $L_1\times_{F_1}(F\circ L_2)$} away from the bisingularities can be identified as in the last subsection. But the Lagrangian composition induced by $F\looparrowright F_1\times F_2$ may provide new {bigons} near the bisingularities.

This subsection starts with comparing {the bigons of $(L_1\circ F)\times_{F_2}L_2$ and the bigons of $L_1\times_{F_1}(F\circ L_2)$}. {Then an example of constructing the bigons of $(L_1\times L_2)\times_{F_1\times F_2}F$ out of $(L_1\circ F)\times_{F_2}L_2$ and {$L_1\times_{F_1}(F\circ L_2)$} is presented.} Finally we {propose a conjecture about} constructing the boundary map of $\CF^Q(L_1,F,L_2;F_1,F_2)$ from {$\CF(L_1\circ F,L_2)$ and $\CF(L_1,F\circ L_2)$}.

\begin{rmk}
    {In \cite{wehrheim2010quilted}, Wehrheim and Woodward proved that under certain monotonicity and admissiblity conditions, the quilted Lagrangian Floer chain group $\CF^Q(L_1,F,L_2;F_1,F_2)$ is a chain complex and its boundary map can be calculated from counting pseduo-holomorphic bigons connecting points in $(L_1\times L_2)\times_{F_1\times F_2}F$.}
\end{rmk}

\begin{defn}\label{def:8.4}
Suppose 
\[
T:S^1\times [-1,1]\rightarrow S^1\times [-1,1]
\]
is a smooth map, where $S^1$ is the unit circle in $\R^2$.
\begin{itemize}
    \item Let $m:[-1,1]\rightarrow[0,1]$ be a smooth map such that $m$ increases strictly in $[-\frac25,\frac25]$, $m(0)=0,$ {$m'(0)\ne0$}, and $m(t)=0$ for $t\in[-1,-\frac35]\cup[\frac35,1]$. A function $T$ is called {\bf a good map} if $T(\exp^{i\theta},t)=(\exp^{i\theta+i\pi m(t)}, t)$, where $\theta\in[0,2\pi)$.
    \item  Let $m:[-1,1]\rightarrow[0,2n]$ be a smooth map such that $m$ increases strictly in $[-\frac14,\frac14]$, $m(0)=0,$ {$m'(0)\ne0$}, $m(t)=0$ for $t\in[-1,-\frac34]$, and $m(t)=2n$ for $t\in[\frac34,1]$. A function $T$ is called an {\bf $n$-Dehn twist} if $T(\exp^{i\theta},t)=(\exp^{i\theta+i\pi m(t)},t)$. Here $n$ is a positive integer.
\end{itemize}
\end{defn}

\begin{rmk}
The good map defined above represents a zero-Dehn twist.
\end{rmk}

\begin{lem}
Let $T$ be a good map or an $n$-Dehn twist. Let $f$ be a fold map defined as below
\begin{align*}
    f: S^1\times[0,1]&\rightarrow S^1\times[0,1]\\
    (\exp^{i\theta},t)&\mapsto(\exp^{i\theta},t^2),
\end{align*}
Note that there is a covering map from $\R\times[-1,1]$ to $S^1\times[-1,1]$. The restriction of the standard symplectic form $\omega_{std}$ of $\R^2$ on $\R\times[-1,1]$ induces a symplectic form $\omega$ on $S^1\times[-1,1]$. Then the map
\[
(f,f\circ T):S^1\times[-1,1]\rightarrow(S^1\times[-1,1])\times (S^1\times[-1,1])
\]
is a Lagrangian immersion into $((S^1\times[-1,1])\times (S^1\times[-1,1]),\omega\times(-\omega))$.
\end{lem}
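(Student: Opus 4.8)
The plan is to check directly the two defining properties of a Lagrangian immersion: that $(f,f\circ T)$ pulls $\omega\times(-\omega)$ back to $0$, and that its differential is everywhere injective. Once a convenient coordinate is fixed both reduce to short computations, so the proof is essentially bookkeeping; the one place where care is needed is the fold locus $t=0$.

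First I would fix coordinates. Write $(\theta,t)$ for the locally defined coordinate on $S^{1}\times[-1,1]$ descending from the coordinate $(x,y)=(\theta,t)$ on $\R\times[-1,1]$ under the stated covering, so that $q^{*}\omega=dx\wedge dy$ means $\omega=d\theta\wedge dt$ locally. In this coordinate $f(\theta,t)=(\theta,t^{2})$, and any $T$ as in Definition \ref{def:8.4} — good map or $n$-Dehn twist — has the form $T(\theta,t)=(\theta+\pi m(t),t)$, hence $f\circ T(\theta,t)=(\theta+\pi m(t),t^{2})$. Thus the map under consideration is $(\theta,t)\mapsto(\theta,t^{2},\theta+\pi m(t),t^{2})$, and the computations below are purely local, valid on each chart of $S^{1}\times[-1,1]$ and hence globally.

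Next I would verify the Lagrangian condition. A direct computation gives $f^{*}\omega=d\theta\wedge d(t^{2})=2t\,d\theta\wedge dt$ and $(f\circ T)^{*}\omega=d(\theta+\pi m(t))\wedge d(t^{2})=\bigl(d\theta+\pi m'(t)\,dt\bigr)\wedge 2t\,dt=2t\,d\theta\wedge dt$, the $dt\wedge dt$ term vanishing. Writing $p_{1},p_{2}$ for the two factor projections of the product, $(f,f\circ T)^{*}(\omega\times(-\omega))=(p_{1}\circ(f,f\circ T))^{*}\omega-(p_{2}\circ(f,f\circ T))^{*}\omega=f^{*}\omega-(f\circ T)^{*}\omega=0$. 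This cancellation is exactly the incarnation in this example of the identity used in the proof of Lemma \ref{lem:det} (here $\det df=\det d(f\circ T)=2t$); it relies on the precise exponent in the fold map together with the fact that $T$ leaves the $t$-coordinate fixed.

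Finally I would check that $(f,f\circ T)$ is an immersion. In the coordinate above its Jacobian is the $4\times2$ matrix with rows $(1,0)$, $(0,2t)$, $(1,\pi m'(t))$, $(0,2t)$. For $t\neq0$ the entry $2t$ shows at once that the two columns are linearly independent, so the differential is injective. At $t=0$ the columns are $(1,0,1,0)^{T}$ and $(0,0,\pi m'(0),0)^{T}$; invoking the strict monotonicity of $m$ near $0$, which gives $m'(0)\neq0$, the second column is nonzero and manifestly independent of the first. Hence the differential is injective everywhere, and $(f,f\circ T)$ is a Lagrangian immersion into $((S^{1}\times[-1,1])\times(S^{1}\times[-1,1]),\omega\times(-\omega))$. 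The main — indeed the only — subtlety is this last step: at $t=0$ both $f$ and $f\circ T$ are singular (the fold), so one genuinely needs $m'(0)\neq0$, i.e. that the kernel lines of $df$ and of $d(f\circ T)$ at the fold point are distinct, for $(f,f\circ T)$ to remain an immersion there; this is consistent with the rank count for bifold points.
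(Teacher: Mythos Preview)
Your proof is correct and follows essentially the same route as the paper: both compute the map in coordinates as $(\theta,t)\mapsto(\theta,t^{2},\theta+\pi m(t),t^{2})$, verify the Lagrangian condition by the cancellation $f^{*}\omega=(f\circ T)^{*}\omega=2t\,d\theta\wedge dt$, and check the immersion property via the $4\times 2$ Jacobian, using $m'(0)\neq 0$ at the fold locus $t=0$. The only cosmetic difference is the order (you do Lagrangian first, immersion second; the paper reverses this), and one small quibble: strict monotonicity of $m$ near $0$ does not by itself force $m'(0)\neq 0$ (think $t\mapsto t^{3}$), so this is really an implicit smoothness/nondegeneracy assumption on $m$---one the paper also takes for granted.
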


\begin{proof}
Consider the lift of $(f,f\circ T)$ to the map 
\begin{align*}
  g:\R\times[-1,1]&\rightarrow(\R\times[-1,1])\times(\R\times[-1,1])\\
  (x,t)&\mapsto(x,t^2,x+\pi m(t),t^2).
\end{align*}
The Jacobian of $g$ is
\begin{equation*}
    (dg)_{(x,t)}=
    \begin{bmatrix}
    1&0\\
    0&2t\\
    1&\pi m'(t)\\
    0&2t
    \end{bmatrix}.
\end{equation*}
First we show that $g$ is an immersion. Only need to prove that 
\[
\rank(dg)_{(x,t)}=2\ \text{for}\ \text{all}\ (x,t).
\]
If $t\ne0$, then $\rank(dg)_{(x,t)}=2$. If $t=0$, then 
\begin{equation*}
    (dg)_{(x,0)}=
    \begin{bmatrix}
    1&0\\
    0&0\\
    1&\pi m'(0)\\
    0&0
    \end{bmatrix}.
\end{equation*}
This is a rank two matrix since $m'(0)\ne 0$.\\
It remains to show that $g$ is a Lagrangian. It can be concluded from the following calculation:
\begin{align*}
    g^*(\omega\times(-\omega))&=2tdx\wedge dt-2td(x+\pi m(t))\wedge dt\\
    &=2tdx\wedge dt-2tdx\wedge dt\\
    &=0.
\end{align*}
\end{proof}

Theorems \ref{thm:main} provides an idea of constructing examples of Lagrangian correspondences from one surface to another surface. This helps us to study how Lagrangian immersions change after performing Lagrangian compositions.

Let $F$, $F_1$, $F_2$ be three closed surfaces and $(F,g=(g_1,g_2);F_1\times F_2)$ be a Lagrangian correspondence from $F_1$ to $F_2$ with bisingular points. {Suppose $F_1$, $F_2$ are closed surfaces with the same genus and the map $g_2$ is given by composing $g_1$ with a self-diffeomorphism of $F$. The following is the special case where this self-diffeomorphism uses the maps in Definition (\ref{def:8.4})}\\
Suppose $S\subset F$ is a smooth one dimensional submanifold. Denote $U(S)\subset F$ as a neighbourhood of $S$ {such that} there is an identification
\[
U(S)= S\times [0,1].
\]
Define a smooth map $T:F\rightarrow F$:
\begin{equation*}
    T=\left\{\begin{aligned}
    &{\rm id}\quad\ \ \ \ \ \ \ \ \ \ \ \ \ \ \ \ \ \ \ \ \ \ \ \ \ \ \ \ \ \  \ \ \ \ \ F-U(S)\\
    &{\rm good}\ {\rm map}\ {\rm or}\ n-{\rm Dehn}\ {\rm twist}\quad U(S)
    \end{aligned}
    \right..
\end{equation*}
Therefore $T$ is a self-diffeomorphism of $F$. The Lagrangian immersion $g$ is defined as
\[
g=(g_1,g_1\circ T).
\]
The followings are examples constructed using this idea.

\begin{figure}[H]
\centering 
\includegraphics[width=0.8\textwidth]{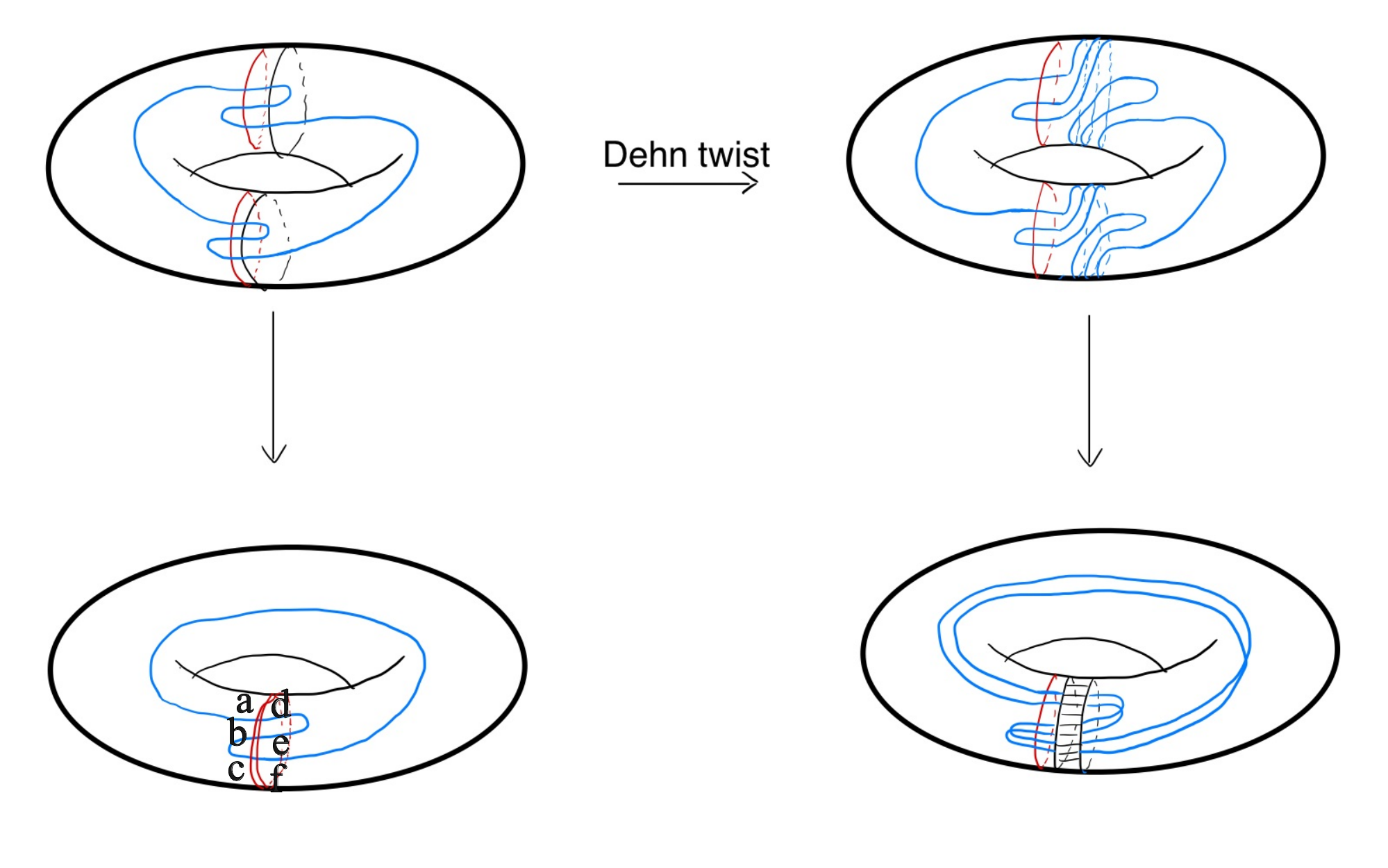}
\caption{The figure for Example \ref{exp:8.10}. The intersections of the blue with the red from the top to the bottom in the lower right picture are $a,d,b,e,c,f$.}
\end{figure}

\begin{ex}\label{exp:8.10}
{This is an example illustrating Theorem \ref{1}. The Lagrangian immersion $g=(g_1,g_2):F\rightarrow F_1\times F_2$ is indicated in Figure 2, where $g_1$ is the left vertical map, and $g_2$ is given by composing $g_1$ with two {1-Dehn} twists as in the top horizontal map. If $g_1$ is a $2:1$ covering map along the longitude, then $g_2$ is also a $2:1$ covering map. The shadow in the lower right picture is the twists induced by the top right picture. The {intersections of the two Lagrangian immersions} on the bottom left and bottom right can be identified. We mark them both as $a,b,c,d,e,f$, as is shown in the picture. Then the {bigon}s connecting these {intersections} are $a$ to $b$, $c$ to $b$, $d$ to $e$, $f$ to $e$. So {the intersections and the bigons} on the bottom left and right are identified.}
\end{ex}

\begin{ex}
The following is an example with bifold singularities. The Lagrangian immersion $g=(g_1,g_2):F\rightarrow F_1\times F_2$ is indicated in Figure 3 below, where $g_1$ is the left vertical map, and $g_2$ is given by composing $g_1$ with two good maps as in the top horizontal map near the bifold singular circles. Here the map $g_1:F\rightarrow F_1$ is given by a projection and the horizontal map is a good map (Definition \ref{def:8.4}) on the bisingular set, then $g_2$ has the same singularity as $g_1$. From the figure, there are exactly two {intersections of the two Lagrangian immersions} in the bottom left picture and a {bigon} connecting them. The same thing is true for the {intersections of the two Lagrangian immersions and the bigon connecting them| corresponding to the bottom right picture.}\\

\begin{figure}[H]\label{fig:F7}
\centering 
\includegraphics[width=0.8\textwidth]{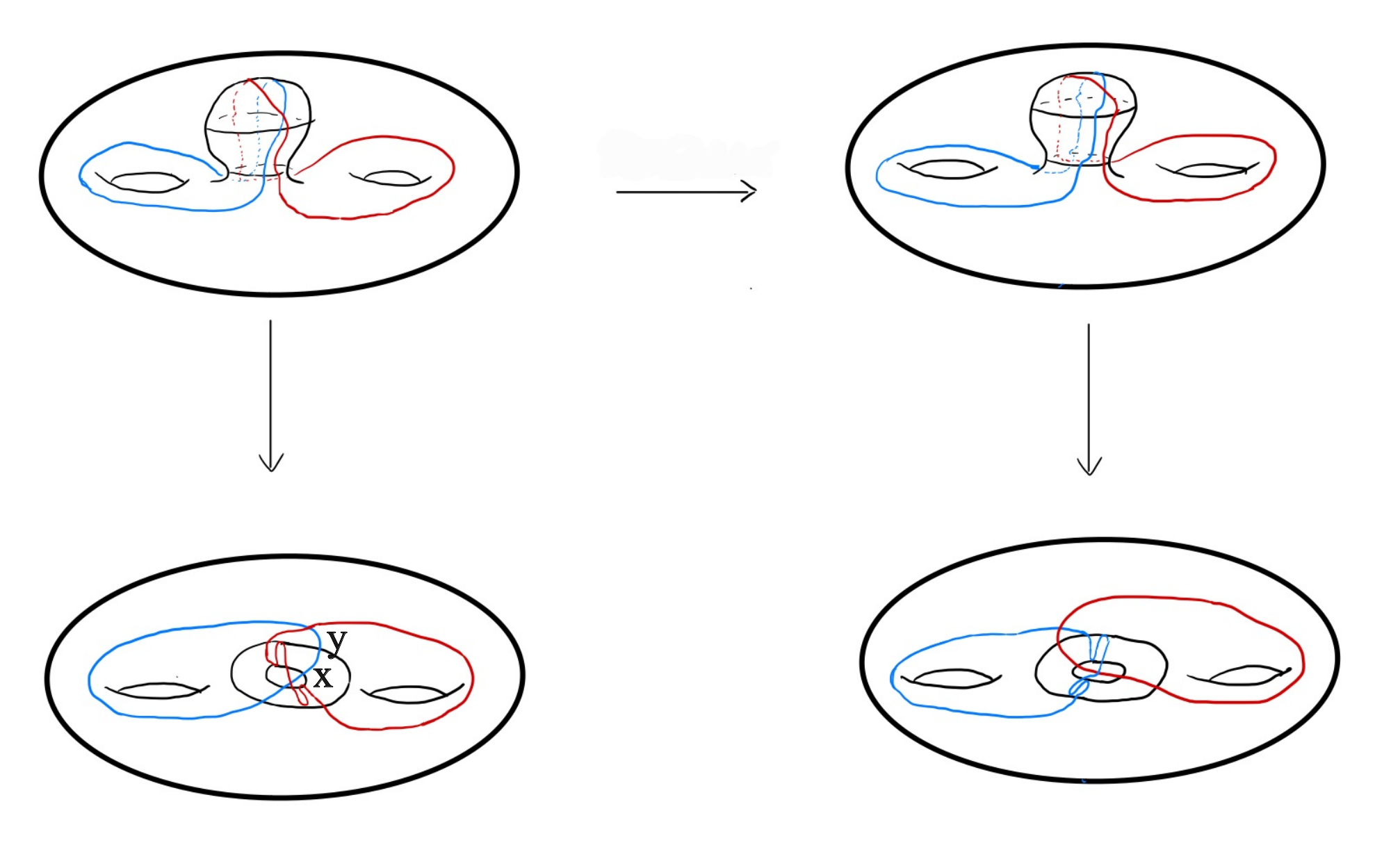}
\caption{The trivial Lagrangian composition in the backside is not drawn} 
\end{figure}

The reason why the conclusion of Theorem \ref{1} still hold is explained as follows. The blue circle in the bottom left picture and the corresponding blue circle on the top left picture can be oriented clockwise. If a point travels along the top left blue circle from the left most point, then this point first hits the red curve, then the smaller fold circle, and finally the large fold circle. Therefore, when performing the Lagrangian composition to the bottom left blue, one needs to consider what happens near $x$ first and then $y$. What the Lagrangian composition looks like around $x$ and $y$ is essentially depicted in Figure 6. These kinds of behaviour of the Lagrangian compositions preserve the bigons.
\end{ex}

\begin{figure}[H]\label{fig:F11}
\centering 
\includegraphics[width=0.5\textwidth]{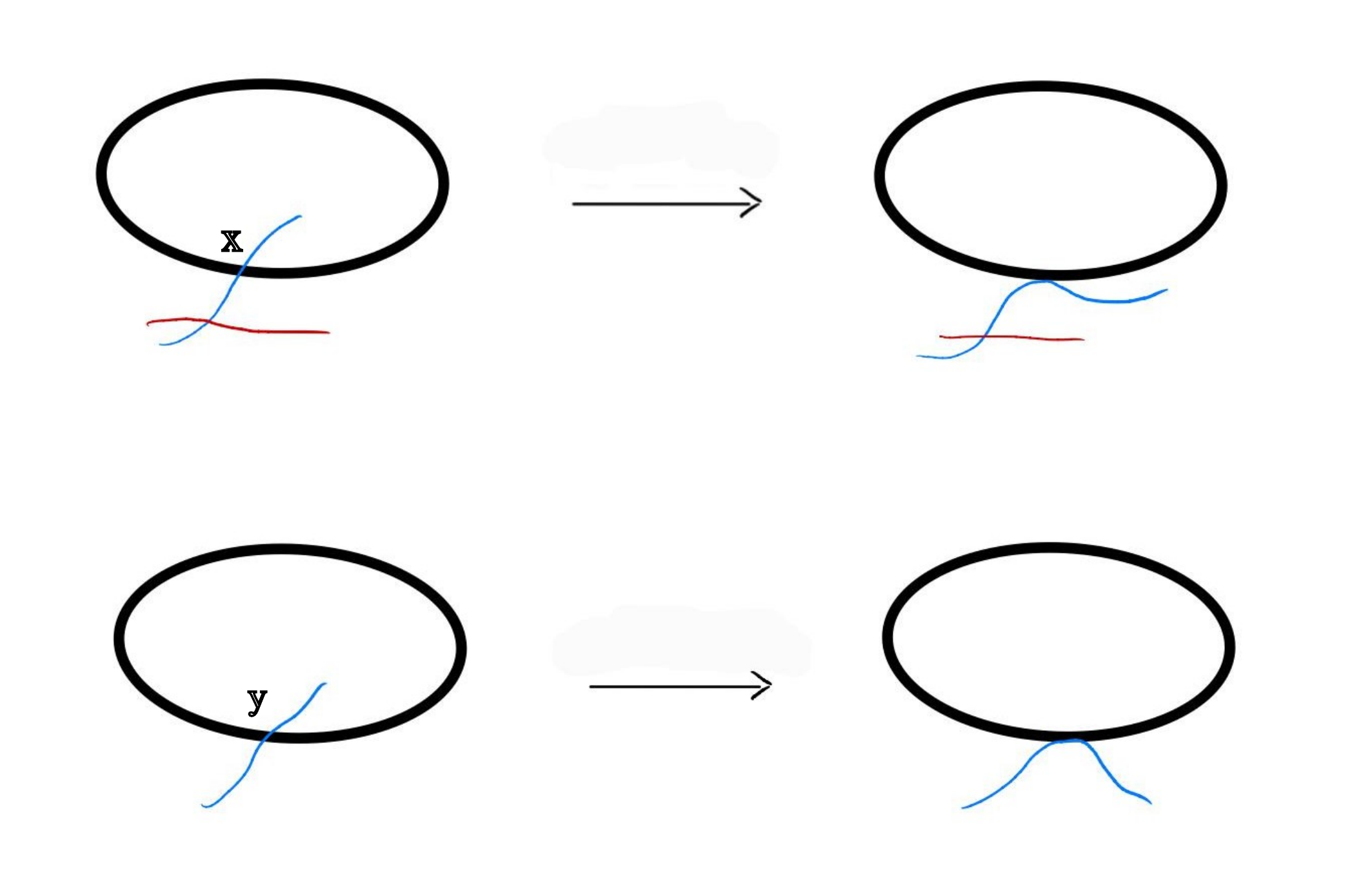}
\caption{{The top line is the Lagrangian composition near $x$. Notice that the blue curve in the top left of Figure 3 first meets the red curve, then meets the small fold circle, finally meets the big fold circle. So after performing the Lagrangian composition, the blue and red curves intersect only once around $x$. The bottom line is the Lagrangian composition near $y$. Notice that the blue curve in the top left of Figure 3 first meets the small fold circle, then meets the big fold circle, finally meets the big fold circle again. So after performing the Lagrangian composition, the blue and red curves do not intersect around $y$.}} 
\end{figure}

\begin{ex}\label{exp:8.12}
{The following is an example with bifold singularities and the Lagrangian Floer chain group vary under a Lagrangian correspondence. The Lagrangian immersion $g=(g_1,g_2):F\rightarrow F_1\times F_2$ is indicated in the figure below, where $g_1$ is the left vertical map, and $g_2$ is given by composing $g_1$ with two 1-Dehn twist as in the top horizontal map near the bifold singular circles. The map $g_1:F\rightarrow F_1$ is given by a fold from top to the bottom, so $g_2$ is has the same fold singularity as $g_1$. From the figure, there are exactly two {intersections $a,b$ of the two Lagrangian immersions} in the bottom left picture without a {bigon} connecting them. But for the {intersections $a,b$ of the two Lagrangian immersions} on bottom right, there is a {bigon} covering the top of the torus. The {non-bijection of these bigons} comes from the Lagrangian composition near the intersection of the red curve with the fold singularities (see middle line of the figure in the next example and compare it with the picture above). }
\begin{figure}[H] 
\centering 
\includegraphics[width=0.5\textwidth]{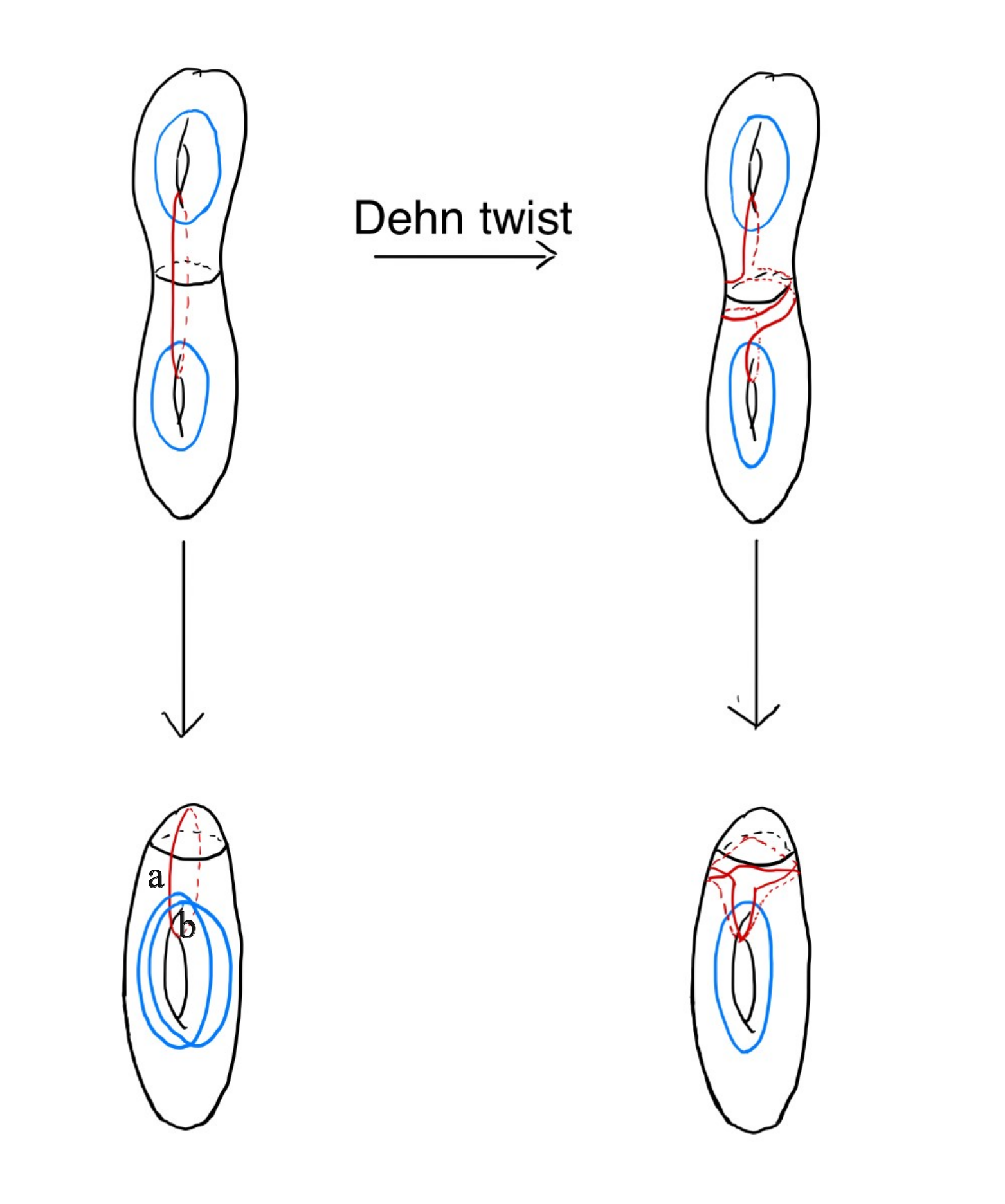}
\end{figure}
\end{ex}

\begin{ex}
The local picture for Lagrangian composition around bisingular sets.
\begin{figure}[H] 
\centering 
\includegraphics[width=0.5\textwidth]{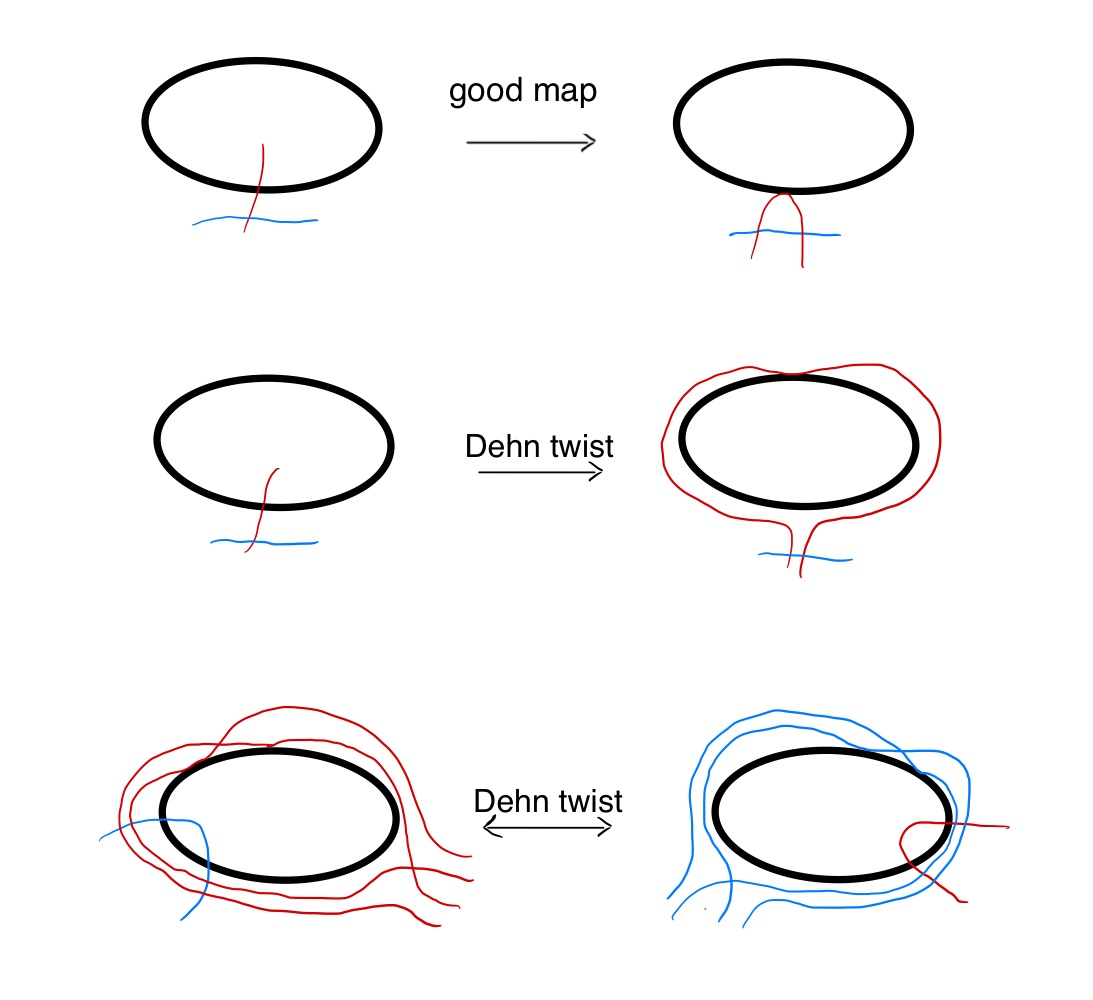}
\end{figure}
\end{ex}

The above examples show that Lagrangian compositions may provide new bigons if there are {bigon}s intersecting the bisingularities.\\

Suppose that all {bigons of {$(L_1\circ F)\times_{F_2}L_2$ and $L_1\times_{F_1}(F\circ L_2)$}} are away from the bisingularities. The following example indicates the possibility of breaking {the bigons of $(L_1\circ F)\times_{F_2}L_2$ and the bigons of $L_1\times_{F_1}(F\circ L_2)$ into bigons of $(L_1\times L_2)\times_{F_1\times F_2}F$}.

\begin{ex}
{The Lagrangian immersion $g=(g_1,g_2):F\rightarrow F_1\times F_2$ is indicated in the figure below, where $g_1$ is the left vertical map, and $g_2$ is given by composing $g_1$ with a 1-Dehn twist as in the top horizontal map near the bifold singular circles. The map $g_1:F\rightarrow F_1$ is given by a fold from top to the bottom, so $g_2$ is has the same fold singularity as $g_1$. The green curve as a map
\[
\R\rightarrow F_1\times F_2
\]
in the following picture can be lifted to $F$ so it breaks the {bigon}s with blue and red boundary in the bottom level into {bigons of $(L_1\times L_2)\times_{F_1\times F_2}F$}, as in the figure below.}
\begin{figure}[H] 
\centering 
\includegraphics[width=0.8\textwidth]{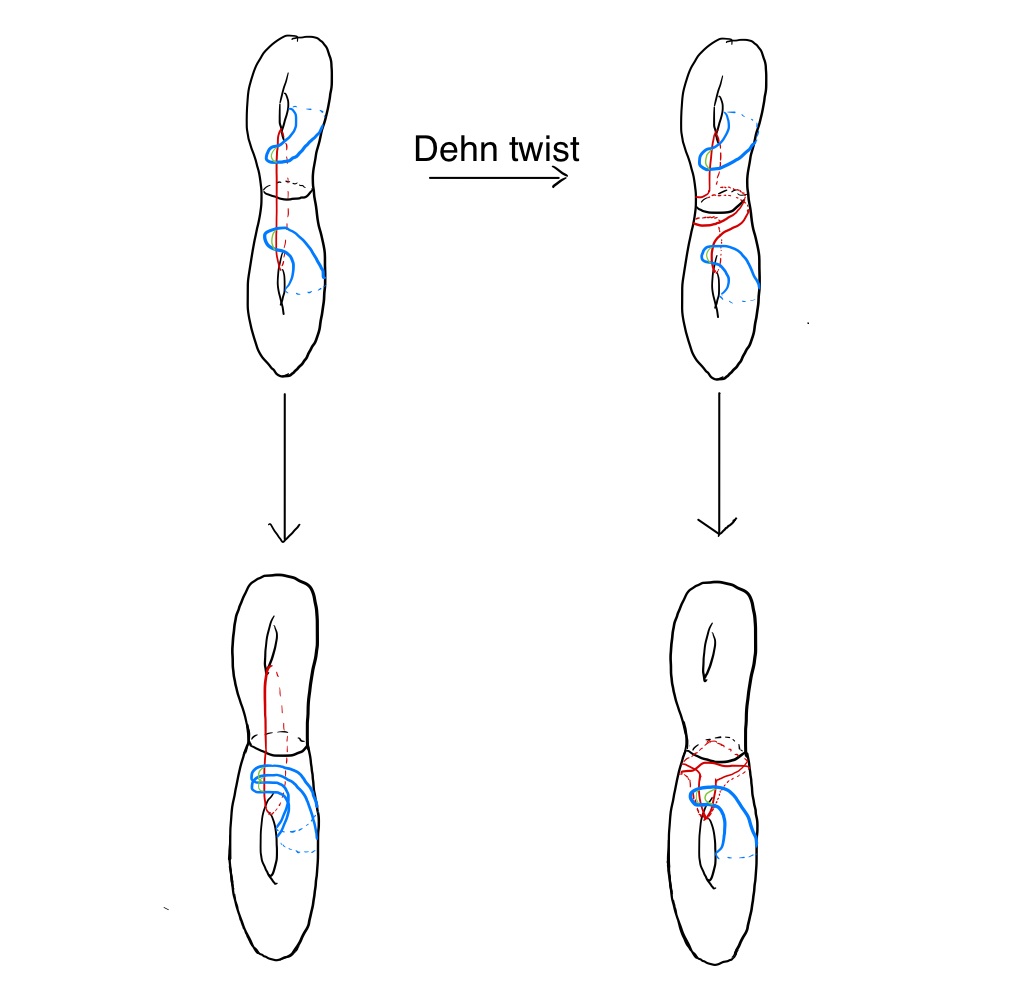}
\end{figure}
\end{ex}
{In the upcoming paper, the author will show that the green curve can be chosen such that one can find a holomorphic {bigon of $(L_1\times L_2)\times_{F_1\times F_2}F$} in this case.}

\begin{conj}
Let $g_1: F\rightarrow F_1$ and $g_2:F\rightarrow F_2$ be two maps with only bifold singularities between closed surfaces. Denote $\omega_1$, $\omega_2$, $\omega_1\times(-\omega_2)$ as the symplectic forms on $F_1$, $F_2$, $F_1\times F_2$ respectively. Assume that
\[
(g_1,g_2):F\rightarrow F_1\times F_2
\] 
is a Lagrangian correspondence from $F_1$ to $F_2$. Suppose that $L_1\looparrowright F_1$ and $L_2\looparrowright F_2$ are two immersed curves, composable with $F$. If the {bigon}s corresponding to the boundary maps in $\CF(L_1\circ F,L_2;F_2)$ and $\CF(L_1,F\circ L_2;F_1)$ are {contained in $\im(g_1)$ and $\im(g_2)$ respectively, then the boundary maps of $\CF^Q(L_1,F,L_2;F_1,F_2)$ can be constructed from $\CF(L_1\circ F,L_2;F_2)$, $\CF(L_1,F\circ L_2;F_1)$.}
\end{conj}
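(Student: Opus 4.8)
The plan is to reduce the whole statement to a purely combinatorial assertion about immersed bigons on the closed surface $F$, following the strategy behind Theorem~\ref{1} but localised away from the bisingular circles. By Example~\ref{exp:abou} all three boundary maps in question count immersed topological discs with two convex corners at the relevant generators, so no almost complex structure need enter and the content is entirely about lifting and projecting such discs through $g_1$ and $g_2$. First I would put $g$ in standard form using Theorem~\ref{thm:main} (so the bisingular set $S$ is a disjoint union of circles consisting of bifold points, with no cusps by hypothesis) together with Theorem~\ref{main6} (so the restrictions $g_i|_S$ are immersions with transversal self-intersections), and Hamiltonian-perturb $L_1,L_2$ into general position: transversal self-intersections, and transversality to the branch loci $g_1(S)\subset F_1$ and $g_2(S)\subset F_2$. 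The hypothesis of the conjecture then says precisely that every disc contributing to $\mu^1$ of $\CF(L_1\circ F,L_2;F_2)$ and of $\CF(L_1,F\circ L_2;F_1)$ has image disjoint from the branch locus, i.e. its lift to $F$ misses $S$.

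The crucial structural observation is that, although $g_1$ and $g_2$ are only local diffeomorphisms away from $S$ and not covering maps, over any compact set $K\subset F_i$ disjoint from the branch locus $g_i(S)$ there is a connected open neighbourhood $N\supset K$ with $g_i\colon g_i^{-1}(N)\to N$ a proper local diffeomorphism, hence a finite covering map. Consequently, for a disc disjoint from $g_i(S)$ the lemmas used in the proof of Theorem~\ref{1} apply verbatim. Introduce the immersed curves $\tilde L_1:=L_1\times_{F_1}F\looparrowright F$ and $\tilde L_2:=F\times_{F_2}L_2\looparrowright F$, which are manifolds by composability; Proposition~\ref{prop:sg} together with the definition of the quilted chain group already identifies the generators of all three complexes with $\tilde L_1\times_F\tilde L_2$. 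A disc in $F_2$ contributing to $\CF(L_1\circ F,L_2;F_2)$ then lifts, uniquely once a lift of one corner is chosen, to a disc in $F$ with boundary on $\tilde L_1$ and $\tilde L_2$, and conversely such a disc on $F$ that misses $S$ pushes forward by $g_2$; the same argument with $g_1$ identifies the boundary map of $\CF(L_1,F\circ L_2;F_1)$ with that of $\CF(\tilde L_1,\tilde L_2;F)$. This yields the identification of the two ordinary boundary maps.

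For the quilted complex one must fold a quilted bigon into a bigon on $F$: given a quilted disc $(u_1,u_2)$ whose seam lifts to a path in $F\setminus S$, lift $u_1$ through $g_1$ and $u_2$ through $g_2$ using the common seam lift as the anchor — legitimate because, away from the bisingularities, each patch lies over a region where $g_1$, resp. $g_2$, restricts to a covering — and glue the two lifts along the seam lift to obtain a bigon $\bar D\subset F$ with sides on $\tilde L_1$ and $\tilde L_2$; conversely a bigon on $F$ disjoint from $S$ is cut along an interior arc into two patches whose $g_1$- and $g_2$-images form a quilted disc (the width mismatch between the two-patch quilted strip and the one-patch strip on $F$ being immaterial in the combinatorial count of Example~\ref{exp:abou}). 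Tracking corners through these operations shows the quilted boundary map $\mu^1_Q$ also agrees with $\mu^1$ of $\CF(\tilde L_1,\tilde L_2;F)$, completing the three-way identification.

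The hard part will be the quilted step, and specifically the passage between the two-patch quilted moduli and the one-patch moduli on $F$ in a neighbourhood of the bisingular circles. The disc-avoidance hypothesis only controls the discs of the two ordinary complexes, so I must argue that under it no quilted disc sneaks through $S$ either — equivalently, that the folding/cutting bijection above neither creates nor destroys configurations along $S$. Near a bisingular circle the correspondence $F$ genuinely degenerates (by the remark following Theorem~\ref{main5}, $g_1$ and $g_2$ cannot both be put in fold normal form in the same chart), so the strip-shrinking argument of Wehrheim–Woodward's geometric composition does not apply directly, and the examples of Subsection~\ref{subsec:8.2} (good maps and $n$-Dehn twists) show that new bigons genuinely appear there; showing these are exactly the configurations excluded by the hypothesis is the essential obstacle. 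A secondary technical point is that $\tilde L_1$ and $\tilde L_2$ need not be embedded even when $L_1,L_2$ are, so the uniqueness of lifts and the counting of bigons must be carried out throughout in the immersed sense of Abouzaid.
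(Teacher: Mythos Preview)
The statement you are trying to prove is stated in the paper as a \emph{conjecture}, not a theorem: there is no proof in the paper to compare against. The paper presents it after a sequence of motivating examples (the good-map and Dehn-twist constructions of Subsection~\ref{subsec:8.2}) and leaves it open.

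That said, your proposal is not a proof either, and you have correctly located the gap yourself. The identification of the two ordinary boundary maps via lifting to $\CF(\tilde L_1,\tilde L_2;F)$ is sound and is exactly the mechanism behind Theorem~\ref{1}; the localisation argument (that $g_i$ restricts to a finite covering over any compact set disjoint from the branch locus) is the right way to transplant those lemmas. The problem is the quilted side. As you note, the hypothesis of the conjecture constrains only the discs of $\CF(L_1\circ F,L_2;F_2)$ and $\CF(L_1,F\circ L_2;F_1)$; it says nothing a priori about quilted discs. Your folding/unfolding bijection is only defined for quilted discs whose seam lift avoids $S$, so to conclude you must show that \emph{every} quilted disc contributing to $\mu^1_Q$ has this property. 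You assert this must follow from the hypothesis but give no argument, and the Dehn-twist examples in the paper show that the correspondence genuinely produces new bigons near $S$ --- so ruling out quilted contributions along $S$ is not automatic. Until that step is supplied, what you have is a proof that the two ordinary boundary maps agree with each other (which already follows from the paper's methods) together with a plausible but incomplete sketch for the quilted comparison; this is consistent with the paper's decision to leave the statement as a conjecture.
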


\begin{rmk}
    {The method of Lemma \ref{lem:covering} can not be applied here. The difficulty comes from the Lagrangian composition near the bifold singularities (Example \ref{exp:8.12}), where the covering space argument does not work. Instead, one needs to solve a {Cauchy-Riemann} equation with boundary conditions. In the cases where the Lagrangian Floer chain groups have different boundary maps, e.g. Example \ref{exp:8.12}, there should be a figure-eight bubbling as defined in \cite{bottman2018gromov}.}
\end{rmk}

%\section{Appendix: Lagrangian composition}

\nocite{*}
\bibliographystyle{plain}
\bibliography{references}

\end{document}